\providecommand{\tabularnewline}{\\}
\theoremstyle{plain}
\newtheorem{thm}{\protect\theoremname}
\theoremstyle{definition}
\newtheorem{defn}[thm]{\protect\definitionname}
\theoremstyle{remark}
\newtheorem{rem}[thm]{\protect\remarkname}
\theoremstyle{plain}
\newtheorem{prop}[thm]{\protect\propositionname}
\theoremstyle{plain}
\newtheorem{lem}[thm]{\protect\lemmaname}
\date{}
\theoremstyle{definition}
\newtheorem{notation}[thm]{Notation}
\providecommand{\definitionname}{Definition}
\providecommand{\lemmaname}{Lemma}
\providecommand{\propositionname}{Proposition}
\providecommand{\remarkname}{Remark}
\providecommand{\theoremname}{Theorem}
\begin{document}
\global\long\def\IN{\mathbb{N}}%
\global\long\def\II{\mathbbm{1}}%
\global\long\def\IZ{\mathbb{Z}}%
\global\long\def\IQ{\mathbb{Q}}%
\global\long\def\IR{\mathbb{R}}%
\global\long\def\IC{\mathbb{C}}%
\global\long\def\IP{\mathbb{P}}%
\global\long\def\IE{\mathbb{E}}%
\global\long\def\IV{\mathbb{V}}%

\title{Reconstructing the Probability Measure of a Curie-Weiss Model Observing
the Realisations of a Subset of Spins}
\author{Miguel Ballesteros\thanks{IIMAS-UNAM, Mexico City, Mexico}, Ivan
Naumkin\footnotemark[1], and Gabor Toth\footnotemark[1]
\thanks{Corresponding author, e-mail: gabor.toth@iimas.unam.mx}}
\maketitle
\begin{abstract}
\noindent We study the problem of reconstructing the probability
measure of the Curie-Weiss model from a sample of the voting behaviour
of a subset of the population. While originally used to study phase
transitions in statistical mechanics, the Curie-Weiss or mean-field
model has been applied to study phenomena, where many agents interact
with each other. It is useful to measure the degree of social cohesion
in social groups, which manifests in the way the members of the group
influence each others' decisions. In practice, statisticians often
only have access to survey data from a representative subset of a
population. As such, it is useful to provide methods to estimate social
cohesion from such data. The estimators we study have some positive
properties, such as consistency, asymptotic normality, and large deviation
principles. The main advantages are that they require only a sample
of votes belonging to a (possibly very small) subset of the population
and have a low computational cost. Due to the wide application of
models such as Curie-Weiss, these estimators are potentially useful
in disciplines such as political science, sociology, automated voting,
and preference aggregation.
\end{abstract}
\textbf{MSC 2020}: 62F10, 82B20, 60F05, 91B12

\textbf{Keywords}: Curie-Weiss model, Mathematical physics, Statistical
mechanics, Gibbs measures, Large population approximation, Mathematical
analysis, Samples of representative subsets of the population

\section{Introduction}

The study of ferromagnetism has played a central role in the development
of statistical mechanics, offering insights into how large ensembles
of interacting microscopic units can give rise to macroscopic order.
At the heart of this endeavour lie mathematical models such as the
Ising and Curie-Weiss models, whose simplicity belies their deep mathematical
structure and broad applicability. In the Ising model, introduced
by Lenz and Ising in the 1920s \cite{Ising1925}, binary-valued spins
are arranged on the lattice $\IZ^{d}$ for some fixed dimension $d\in\IN$,
with each spin influenced by its immediate neighbours and external
magnetic fields. Although the model's one-dimensional case is analytically
tractable and exhibits no phase transition at finite temperature,
higher-dimensional versions reveal complex behaviour, including phase
transitions and criticality, a result first rigorously demonstrated
by Onsager in two dimensions \cite{Onsager1944}.

The Curie-Weiss model, developed slightly earlier by the physicists
Pierre Curie and Pierre Weiss in order to study phase transitions,
adopts a different approach by replacing local interactions with a
global coupling where each spin interacts symmetrically with every
other. This mean-field formulation makes the model mathematically
accessible while preserving key features of collective phenomena,
such as spontaneous magnetisation. While it originated in the context
of statistical physics, the Curie-Weiss model has proven remarkably
versatile, finding applications in fields as diverse as neuroscience
\cite{BouMcDMu2007}, economics (there is a large body of literature
dealing with models of strategic interactions of economic agents using
a mean-field approach as in the Curie-Weiss model, e.g. \cite{JovaRose1988,BD2001}),
queuing theory \cite{BaKaKPRS1992}, sociology \cite{AlBaKhBu2022},
and political science \cite{DieKauKa2023}. The model\textquoteright s
appeal lies in its ability to capture consensus formation, peer influence,
and polarisation through a minimal set of assumptions about agent
interactions.

One fruitful direction of generalisation of the Curie-Weiss model,
motivated by empirical contexts where agents do not form a single
homogeneous population, is to divide the population into multiple,
say $M\in\IN$, groups with potentially differing internal dynamics.
This yields a multi-group Curie-Weiss model, in which the strength
of interaction may vary across groups. A growing body of literature
has investigated this framework, with applications ranging from opinion
dynamics and voting theory \cite{KT2021c} to statistical community
detection \cite{BRS2019,LoweSchu2020,BaMePeTo2023}. We will consider
the case of non-interacting groups, and thus the model has a set of
coupling non-negative coupling constants $\beta_{\lambda}$, where
each group is labelled by some $\lambda\in\mathbb{N}_{M}$\footnote{We will write $\IN_{m}\coloneq\left\{ 1,\ldots,m\right\} $ for any
$m\in\IN$.}. Each coupling constant $\beta_{\lambda}$ regulates the interaction
between agents in group $\lambda$. The multi-group formulation can
accommodate differences in cultural norms or ideologies or social
cohesion within subpopulations.

Previous articles explored the estimation problem of the coupling
parameters from a sample of votes from the entire population. \cite{BaMeSiTo2025}
studies the maximum likelihood estimator which is consistent (the
estimator converges in probability to the true parameter value), asymptotically
normal (the normal fluctuations allowing for the calculation of confidence
intervals), and satisfies a large deviation property (ensuring swift
decay of the probability of a large deviation of the estimator from
the true parameter). Despite these desirable properties, the maximum
likelihood estimator has a drawback which may make it impractical
in many applications\ involving large voter populations, such as
countries: it necessitates the calculation of the so called partition
function which scales exponentially with the size of the population.
For this reason, the article \cite{BalNauTo2025} studies an approximation
of the maximum likelihood estimator valid for large populations. This
estimator can be calculated from a sample of votes under a constant
and low computational cost. It shares the statistical properties of
the maximum likelihood estimator, with the consistency being subject
to the assumption that the population is large enough. While this
estimator is more practical than its maximum likelihood counterpart,
it shares the drawback of being based on a sample of votes of the
entire population.

In this article, we explore two estimators which can be calculated
from a sample of votes of a (potentially very small) subset of the
entire population, thus allowing the reconstruction of the Curie-Weiss
probability measure in many cases which arise in practice, such as
from survey data from a representative subset of a population. One
family of estimators is based on the idea of applying the optimality
condition (\ref{eq:opt}) characterising the maximum likelihood estimator
to the data from a subset of votes. The second family of estimators
is based on observing the empirical pair correlation between votes
and contrasting with the theoretical value for the Curie-Weiss model
given a set of coupling parameters. Both estimators have a low computational
cost similar to the estimator in \cite{BalNauTo2025}, thus combining
the two main properties of a practical estimator in the context of
large populations: low computational cost and only requiring a sample
of votes from a small subset of each group.

Though the definition of our two estimators are very different, it
turns out that for most samples these two estimators will yield very
similar values, and in fact, as the population goes to infinity, they
yield the same estimator (see Theorem \ref{thm:asymp_equiv}). The
main trade-off of calculating these estimators from a subset of votes
is a higher variance of the estimator when compared to using a sample
from the entire population.

A compelling use case of the multi-group Curie-Weiss model is in voting
theory. Here, spins are reinterpreted as binary votes in elections,
and the interaction structure reflects the degree of alignment within
each group of voters. When decisions are aggregated in a two-tier
voting system, such as a federal council composed of representatives
from different regions or member states, the question arises of how
to assign voting weights that appropriately reflect both group size
and internal cohesion. This problem is not merely academic: real-world
institutions such as the Council of the European Union rely on weighted
voting mechanisms to balance the influence of countries with varying
populations and political cultures. Depending on the fairness criterion
underlying the assignation of voting weights corresponding to each
group, a crucial ingredient is a probabilistic voting model which
describes in statistical terms how voters interact with each other.
Thus, a central statistical task is to infer the group-level interaction
strengths from empirical voting data, which then inform the allocation
of fair and representative weights. See \cite[Section 7]{BaMeSiTo2025}
and \cite[Section 5]{BalNauTo2025} for a study of how to estimate
the voting weights. The estimators found in this paper can also be
employed for this purpose.

The broader significance of this work lies in its synthesis of statistical
mechanics, inference theory, and institutional design. It contributes
to a line of research that leverages physical models to shed light
on questions of fairness, aggregation, and representation in complex
systems. In doing so, it offers rigorous tools for both understanding
and designing voting schemes that respond to real-world patterns of
group behaviour. Our exposition is intended to be accessible to a
broad academic audience, combining formal mathematical proofs with
conceptual clarity for readers across disciplines, including mathematics,
statistics, physics, economics, and political science.

The rest of the article is arranged as follows: Section \ref{sec:Curie-Weiss-Model}
defines the Curie-Weiss model. The main results concerning the statistical
properties of the two families of estimators and their large population
asymptotic equivalence are found in Section \ref{sec:main_res}. The
proofs of these results are presented in Section \ref{sec:Proofs},
and an Appendix contains some auxiliary results we employ in our proofs.

\section{\label{sec:Curie-Weiss-Model}The Curie-Weiss Model}

We have a voting population subdivided into $M\in\IN$ groups of size
$N_{\lambda}\in\IN$, $\lambda\in\IN_{M}$ each. The total size of
the population is then $N_{1}+\cdots+N_{M}$, and the votes take values
in the space
\[
\Omega_{N_{1}+\cdots+N_{M}}\coloneq\left\{ -1,1\right\} ^{N_{1}+\cdots+N_{M}}.
\]
Each individual is referenced by two indices $\lambda\in\IN_{M}$
for their group and $i\in\IN_{N_{\lambda}}$ for the individual. The
random variable $X_{\lambda i}$ is the vote of person $i$ in group
$\lambda$. Each vote takes a value $X_{\lambda i}=x_{\lambda i}\in\Omega_{1}$.
The elements $\left(x_{11},\ldots,x_{1N_{1}},\ldots,x_{M1},\ldots,x_{MN_{M}}\right)\in\Omega_{N_{1}+\cdots+N_{M}}$
will be called voting configurations, with each voting configuration
being a complete record of the votes cast by the population on a specific
issue. The voting behaviour is described by the following voting model:
\begin{defn}
\label{def:CWM}Let $N_{\lambda}\in\IN$ and $\beta_{\lambda}\in\IR$,
$\lambda\in\IN_{M}$. We define the vectors $\boldsymbol{N}\coloneq\left(N_{1},\ldots,N_{M}\right)$
and $\boldsymbol{\beta}\coloneq\left(\beta_{1},\ldots,\beta_{M}\right)$.
The\emph{ Curie-Weiss model} (CWM) is defined for all voting configurations\\
$\left(x_{11},\ldots,x_{1N_{1}},\ldots,x_{M1},\ldots,x_{MN_{M}}\right)\in\Omega_{N_{1}+\cdots+N_{M}}$
by
\begin{equation}
\IP_{\boldsymbol{\beta},\boldsymbol{N}}\left(X_{11}=x_{11},\ldots,X_{MN_{M}}=x_{MN_{M}}\right)\coloneq Z_{\boldsymbol{\beta},\boldsymbol{N}}^{-1}\,\exp\left(\frac{1}{2}\sum_{\lambda=1}^{M}\frac{\beta_{\lambda}}{N_{\lambda}}\left(\sum_{i=1}^{N_{\lambda}}x_{\lambda i}\right)^{2}\right),\label{eq:CWM}
\end{equation}
where $Z_{\boldsymbol{\beta},\boldsymbol{N}}$ is called the partition
function which depends on both $\boldsymbol{\beta}$ and $\boldsymbol{N}$.
The constants $\beta_{\lambda}$ are referred to as coupling parameters.
\end{defn}

The original CWM with $M=1$ was conceived as a model of ferromagnetism
with a single coupling parameter $\beta$ which represents the inverse
temperature of the system. The range of values for $\beta$ is thus
usually $\left[0,\infty\right)$. We allow values of $\beta$ outside
this range for technical reasons related to the range of the sample
statistics $\boldsymbol{P}_{\boldsymbol{K}_{\boldsymbol{N}}}$ and
$\boldsymbol{T}_{\boldsymbol{K}_{\boldsymbol{N}}}$ (see Definitions
\ref{def:PKN} and \ref{def:TKN} below).

As a model of voting, the CWM has non-negative coupling parameters
as a reflection of social cohesion within each group. Each group's
coupling parameter $\beta_{\lambda}$ is a measure of how much the
voters influence each other in their decisions, with the influence
being stronger for higher values of $\beta_{\lambda}$. This influence
is stronger for larger $\beta_{\lambda}$. The most probable voting
configurations under the measure defined in (\ref{eq:CWM}) are the
unanimous votes either for or against the proposal. The far more numerous
configurations with roughly equal numbers of votes for and against
are individually of lower probability than configurations with large
majorities. What the size of the typical majority under the model
is depends on the magnitude of the coupling parameters. For non-interacting
groups, each group can be in any of the three regimes of the CWM determined
by the value of the parameter $\beta_{\lambda}\geq0$. The three regimes
are listed in Table \ref{tab:Regimes}.

\begin{table}
\begin{centering}
\begin{tabular}{|c|c|c|}
\hline 
High temperature regime & Critical regime & Low temperature regime\tabularnewline
\hline 
\hline 
$\beta_{\lambda}<1$ & $\beta_{\lambda}=1$  & $\beta_{\lambda}>1$\tabularnewline
\hline 
\end{tabular}
\par\end{centering}
\caption{\label{tab:Regimes}Regimes of each group}

\end{table}

The partition function $Z_{\boldsymbol{\beta},\boldsymbol{N}}$ of
the CWM is a normalising constant given by the sum of the exponentials
for each voting configuration:
\begin{equation}
Z_{\boldsymbol{\beta},\boldsymbol{N}}=\sum_{x\in\Omega_{N_{1}+\cdots+N_{M}}}\exp\left(\frac{1}{2}\sum_{\lambda=1}^{M}\frac{\beta_{\lambda}}{N_{\lambda}}\left(\sum_{i=1}^{N_{\lambda}}x_{\lambda i}\right)^{2}\right).\label{eq:part_fn}
\end{equation}
The group voting margins (i.e.\! the difference between the numbers
of yes and no votes) for each $\lambda\in\IN_{M}$ are defined as
\begin{equation}
S_{\lambda}\coloneq\sum_{i=1}^{N_{\lambda}}X_{\lambda i},\quad\lambda\in\IN_{M}.\label{eq:S_lambda}
\end{equation}
If a function $f:\Omega_{N_{\lambda}}\rightarrow\IR$ depends only
on the votes belonging to group $\lambda$, such as $S_{\lambda}$
above, we will write $\IE_{\beta_{\lambda},N_{\lambda}}f$ for the
expectation $\IE_{\boldsymbol{\beta},\boldsymbol{N}}f$ as it depends
only on the marginal distribution $\IP_{\beta_{\lambda},N_{\lambda}}$
on $\Omega_{N_{\lambda}}$.

\begin{notation}Throughout the article, the symbol $\IE X$ will
stand for the expectation and $\IV X$ for the variance of some random
variable $X$. Capital letters such as $X$ will denote random variables,
while lower case letters such as $x$ will stand for realisations
of the corresponding random variable.\end{notation}

\section{\label{sec:main_res}Estimators of $\boldsymbol{\beta}$ Based on
Subsets of Votes}

The maximum likelihood estimator $\hat{\boldsymbol{\beta}}_{ML}$
of $\boldsymbol{\beta}$ requires a sample $\left(x^{(1)},\ldots,x^{(n)}\right)\in\Omega_{N_{1}+\cdots+N_{M}}^{n}$
and is obtained as the value which maximises the likelihood function
given $\left(x^{(1)},\ldots,x^{(n)}\right)$. This optimality condition
is equivalent to
\begin{equation}
\IE_{\hat{\boldsymbol{\beta}}_{ML}\left(\lambda\right),N_{\lambda}}S_{\lambda}^{2}=\frac{1}{n}\sum_{t=1}^{n}\left(\sum_{i=1}^{N}x_{\lambda i}^{\left(t\right)}\right)^{2}\label{eq:opt}
\end{equation}
for each $\lambda\in\IN_{M}$, where $\hat{\boldsymbol{\beta}}_{ML}\left(\lambda\right)$
is the maximum likelihood estimate for the parameter $\beta_{\lambda}$
given the sample $\left(x^{(1)},\ldots,x^{(n)}\right)$. See \cite[Section 3]{BaMeSiTo2025}
for a derivation of (\ref{eq:opt}) and the definition of the maximum
likelihood estimator for $\boldsymbol{\beta}$.

We will assume we have access to $n\in\IN$ observations of voting
outcomes. However, instead of observing the votes of the entire population
of size $N_{1}+\cdots+N_{M}$, we only observe a subset of votes.

From each group $\lambda\in\IN_{M}$ of size $N_{\lambda}$, we only
observe a subset of $K_{\lambda,N_{\lambda}}$ , $1\leq K_{\lambda,N_{\lambda}}\leq N_{\lambda}$,
votes. We can assume without loss of generality that these votes are
the first $K_{\lambda,N_{\lambda}}$ votes belonging to group $\lambda$
since the random variables $X_{\lambda1},\ldots,X_{\lambda N_{\lambda}}$
representing the votes from group $\lambda$ are exchangeable (see
Definition \ref{def:exchange} and Lemma \ref{lem:exchange})\footnote{In fact, a generalisation is possible and frequently useful in practice:
we can assume that the identities of the $K_{\lambda,N_{\lambda}}$
voters from group $\lambda$ vary between observations. The only essential
assumption is that each observation $t\in\IN_{n}$ in the sample has
the same number $K_{\lambda,N_{\lambda}}$ of votes from each group
$\lambda$.}. Set $\boldsymbol{K}_{\boldsymbol{N}}\coloneq\left(K_{1,N_{1}},\ldots,K_{M,N_{M}}\right)$
and $\bar{K}_{\boldsymbol{N}}\coloneq\sum_{\lambda=1}^{M}K_{\lambda,N_{\lambda}}$.

Each sample takes values in the space
\[
\Omega_{\bar{K}_{\boldsymbol{N}}}^{n}\coloneq\prod_{i=1}^{n}\Omega_{\bar{K}_{\boldsymbol{N}}}.
\]

\begin{defn}
\label{def:alpha}Let $\left(K_{\lambda,N_{\lambda}}\right)_{N_{\lambda}\in\IN}$,
$\lambda\in\IN_{M}$, be sequences with the properties $K_{\lambda,N_{\lambda}}\in\IN$
and $1\leq K_{\lambda,N_{\lambda}}\leq N_{\lambda}$, $N_{\lambda}\in\IN$.
We define
\[
\alpha_{\lambda}\coloneq\lim_{N_{\lambda}\rightarrow\infty}\frac{K_{\lambda,N_{\lambda}}}{N_{\lambda}},\quad\lambda\in\IN_{M},
\]
if each limit exists.
\end{defn}

We also define the sum of all observed votes:
\begin{defn}
\label{def:sub_sum}Let $\left(K_{\lambda,N_{\lambda}}\right)_{N_{\lambda}\in\IN}$,
$\lambda\in\IN_{M}$, be sequences with the properties $K_{\lambda,N_{\lambda}}\in\IN$
and $1\leq K_{\lambda,N_{\lambda}}\leq N_{\lambda}$, $N_{\lambda}\in\IN$.
We define for all $\lambda\in\IN_{M}$,
\[
\Sigma_{\lambda,N_{\lambda}}\coloneq\sum_{i=1}^{K_{\lambda,N_{\lambda}}}X_{\lambda i}
\]
and
\[
\boldsymbol{\Sigma}_{\boldsymbol{N}}\coloneq\left(\Sigma_{1,N_{1}},\ldots,\Sigma_{M,N_{M}}\right).
\]
\end{defn}

\begin{rem}
We note that the distribution of the random variable $\Sigma_{\lambda,N_{\lambda}}$
depends on both $K_{\lambda,N_{\lambda}}$ and $N_{\lambda}$. For
readability's sake, we omit the subindex $K_{\lambda,N_{\lambda}}$.
\end{rem}

We will next specify intervals for the estimation of the parameters
$\beta_{\lambda}$.
\begin{defn}
\label{def:intervals}Given constants $0\leq b_{1}<1<b_{2}$ such
that 
\begin{equation}
\frac{b_{1}}{1-b_{1}}\frac{1}{N}+\boldsymbol{C}_{\textup{high}}\left(\frac{\ln N}{N}\right)^{2}<m\left(b_{2}\right)^{2}-\boldsymbol{C}_{\textup{low}}\frac{\left(\ln N\right)^{\frac{3}{2}}}{\sqrt{N}},\label{eq:separation}
\end{equation}
define the intervals
\begin{align*}
I_{h} & \coloneq\left[0,b_{1}\right],\quad I{}_{c}\coloneq\left(b_{1},b_{2}\right),\quad I{}_{l}\coloneq\left[b_{2},\infty\right),\\
J_{h} & \coloneq\left[-1,\frac{b_{1}}{1-b_{1}}\frac{1}{N}+\boldsymbol{C}_{\textup{high}}\left(\frac{\ln N}{N}\right)^{2}\right],\\
J_{c} & \coloneq\left(\frac{b_{1}}{1-b_{1}}\frac{1}{N}+\boldsymbol{C}_{\textup{high}}\left(\frac{\ln N}{N}\right)^{2},m\left(b_{2}\right)^{2}-\boldsymbol{C}_{\textup{low}}\frac{\left(\ln N\right)^{\frac{3}{2}}}{\sqrt{N}}\right),\\
J_{l} & \coloneq\left[m\left(b_{2}\right)^{2}-\boldsymbol{C}_{\textup{low}}\frac{\left(\ln N\right)^{\frac{3}{2}}}{\sqrt{N}},1\right].
\end{align*}
\end{defn}

\begin{rem}
\label{rem:intervals}Condition (\ref{eq:separation}) holds for all
$N$ large enough since $m\left(\beta\right)>0$ for all $\beta>1$
by Lemma \ref{lem:m_beta_increasing}. By Proposition \ref{prop:appr_correlations},
we have $\IE_{\beta,N}X_{1}X_{2}\in J_{k}$ for all $\beta\in I_{k}$,
$k\in\left\{ h,l\right\} $. In fact, Proposition \ref{prop:appr_correlations}
states the stronger condition that for all $\beta\in I_{k}$, $\IE_{\beta,N}X_{1}X_{2}$
lies in the interior of $J_{k}$ for $k\in\left\{ h,l\right\} $.
We will define $\hat{\boldsymbol{\gamma}}_{\boldsymbol{N}}$ (Definitions
\ref{def:pair_2} and \ref{def:pair_K}) supposing true parameter
values $\beta_{\lambda}$ in $I_{h}\cup I_{l}$ for each group, where
we will assume that for each $\lambda$ condition (\ref{eq:separation})
holds for $N_{\lambda}$ instead of the generic $N$.
\end{rem}

\begin{defn}
\label{def:intervals-1}Let $\left(K_{N}\right)_{N\in\IN}$ be a divergent
sequence with the properties $K_{N}\in\IN$ and $1\leq K_{N}\leq N$,
$N\in\IN$, and let $\alpha\in\left[0,1\right]$ be as in Definition
\ref{def:alpha}. Given constants $0\leq b_{1}<1<b_{2}$ such that
\begin{equation}
\frac{1-\left(1-\alpha\right)b_{1}}{1-b_{1}}K_{N}+\boldsymbol{D}_{\textup{high}}\sqrt{K_{N}}<m\left(b_{2}\right)^{2}K_{N}^{2}-\boldsymbol{D}_{\textup{low}}\frac{\left(\ln N\right)^{\frac{3}{2}}}{\sqrt{N}}K_{N}^{2},\label{eq:separation-1}
\end{equation}
define the intervals
\begin{align*}
I'_{h} & \coloneq\left[0,b_{1}\right],\quad I'_{c}\coloneq\left(b_{1},b_{2}\right),\quad I'_{l}\coloneq\left[b_{2},\infty\right),\\
J'_{h} & \coloneq\left[\min\textup{Range}\left(\Sigma_{K_{N}}^{2}\right),\frac{1-\left(1-\alpha\right)b_{1}}{1-b_{1}}K_{N}+\boldsymbol{D}_{\textup{high}}\sqrt{K_{N}}\right],\\
J'_{c} & \coloneq\left(\frac{1-\left(1-\alpha\right)b_{1}}{1-b_{1}}K_{N}+\boldsymbol{D}_{\textup{high}}\sqrt{K_{N}},m\left(b_{2}\right)^{2}K_{N}^{2}-\boldsymbol{D}_{\textup{low}}\frac{\left(\ln N\right)^{\frac{3}{2}}}{\sqrt{N}}K_{N}^{2}\right),\\
J'_{l} & \coloneq\left[m\left(b_{2}\right)^{2}K_{N}^{2}-\boldsymbol{D}_{\textup{low}}\frac{\left(\ln N\right)^{\frac{3}{2}}}{\sqrt{N}}K_{N}^{2},\infty\right).
\end{align*}
\end{defn}

\begin{rem}
\label{rem:intervals-1}Similarly to Remark \ref{rem:intervals},
Proposition \ref{prop:expec_Sigma} states that for all $\beta\in I'_{k}$,
$\IE_{\beta,N}S^{2}$ lies in the interior of $J'_{k}$ for $k\in\left\{ h,l\right\} $.
We will define $\hat{\boldsymbol{\zeta}}_{\boldsymbol{N}}$ (Definition
\ref{def:zeta^hat}) supposing true parameter values $\beta_{\lambda}$
in $I'_{h}\cup I'_{l}$ for each group, where we will assume that
for each $\lambda$ condition (\ref{eq:separation-1}) holds for $N_{\lambda}$
and $K_{\lambda,N_{\lambda}}$instead of the generic $N$ and $K_{N}$.
\end{rem}

\subsection{Pair Correlation Estimators}

The first family of estimators we will define is based on the observation
of how pairs of voters interact with each other, i.e. whether they
tend to agree in their votes, and if so, how strongly.

See Proposition \ref{prop:appr_correlations} concerning the asymptotic
behaviour of $\IE_{\beta_{\lambda},N_{\lambda}}X_{\lambda1}X_{\lambda2}$.
We will use the limits presented therein to define the constants $\tilde{\gamma}_{N}$,
$N\in\IN$. Recall also Definition \ref{def:intervals} of the intervals
$I_{h}$ and $I_{l}$.
\begin{defn}
\label{def:gamma_tilde}Let for all $N\in\IN$ the constant $\tilde{\gamma}_{N}$
be defined by
\[
\IE_{\beta,N}X_{1}X_{2}=\begin{cases}
\frac{\tilde{\gamma}_{N}}{1-\tilde{\gamma}_{N}}\frac{1}{N} & \textup{if }\beta\in I{}_{h},\\
m\left(\tilde{\gamma}_{N}\right)^{2} & \textup{if }\beta\in I{}_{l}.
\end{cases}
\]
Set the vector $\tilde{\gamma}_{\boldsymbol{N}}$ equal to
\[
\left(\tilde{\gamma}_{N_{1}},\ldots,\tilde{\gamma}_{N_{M}}\right).
\]
\end{defn}

We define the estimator $P_{K_{N}}$ based on all $\left(\begin{array}{c}
K_{N}\\
2
\end{array}\right)$ pair correlations among the $K_{N}$ votes as follows: we first define
a statistic $P_{K_{N}}$ and then use it to define the estimator $\hat{\gamma}_{K_{N}}$
for $\beta$.
\begin{defn}
\label{def:PKN}Let $n,N\in\IN$ and $\left(x^{(1)},\ldots,x^{(n)}\right)\in\Omega_{K_{N}}^{n}$.
Then we define
\begin{align*}
\boldsymbol{P}_{\boldsymbol{K}_{\boldsymbol{N}}}\left(x^{(1)},\ldots,x^{(n)}\right) & \coloneq\frac{1}{n}\sum_{t=1}^{n}\left(\frac{1}{K_{1,N_{1}}\left(K_{1,N_{1}}-1\right)}\sum_{1\leq i,j\leq K_{1,N_{1}},i\neq j}x_{1i}^{(t)}x_{1j}^{(t)},\right..\\
 & \quad\left.\ldots,\frac{1}{K_{M,N_{M}}\left(K_{M,N_{M}}-1\right)}\sum_{1\leq i,j\leq K_{M,N_{M}},i\neq j}x_{Mi}^{(t)}x_{Mj}^{(t)}\right)
\end{align*}
\end{defn}

\begin{notation}\label{notation:infty}We will write $\left[-\infty,\infty\right]$
for the compactification $\IR\cup\left\{ -\infty,\infty\right\} $
and $\left[0,\infty\right]$ for $\left[0,\infty\right)\cup\left\{ \infty\right\} $.\end{notation}
\begin{defn}
\label{def:pair_K}Let $\left(K_{\lambda,N_{\lambda}}\right)_{N_{\lambda}\in\IN}$,
$\lambda\in\IN_{M}$, be divergent sequences with the properties $K_{\lambda,N_{\lambda}}\in\IN$
and $1\leq K_{\lambda,N_{\lambda}}\leq N_{\lambda}$, $N_{\lambda}\in\IN$.
Let $\boldsymbol{\beta}\geq0$, $n\in\IN$, $\boldsymbol{N}\in\IN^{M}$,
and $\left(x^{(1)},\ldots,x^{(n)}\right)\in\Omega_{\bar{K}_{\boldsymbol{N}}}^{n}$.
Let $0\leq b_{1}<1<b_{2}$ be constants satisfying the condition (\ref{eq:separation}),
and consider the intervals from Definition \ref{def:intervals}. Then
we define the \emph{estimator} $\hat{\boldsymbol{\gamma}}_{\boldsymbol{K}_{\boldsymbol{N}}}:\Omega_{\bar{K}_{\boldsymbol{N}}}^{n}\rightarrow\left[-\infty,\infty\right]$
\emph{based on the correlations between the first $K_{N}$ votes},
for all $\left(x^{(1)},\ldots,x^{(n)}\right)\in\Omega_{K_{N}}^{n}$,
by
\begin{enumerate}
\item If $\left(\boldsymbol{P}_{\boldsymbol{K}_{\boldsymbol{N}}}\left(x^{(1)},\ldots,x^{(n)}\right)\right)_{\lambda}\in J{}_{h}$,
then
\[
\left(\hat{\boldsymbol{\gamma}}_{\boldsymbol{K}_{\boldsymbol{N}}}\left(x^{(1)},\ldots,x^{(n)}\right)\right)_{\lambda}\coloneq\begin{cases}
\frac{N_{\lambda}P_{K_{N}}\left(x^{(1)},\ldots,x^{(n)}\right)}{N_{\lambda}P_{K_{N}}\left(x^{(1)},\ldots,x^{(n)}\right)+1} & \textup{if }\left(\boldsymbol{P}_{\boldsymbol{K}_{\boldsymbol{N}}}\left(x^{(1)},\ldots,x^{(n)}\right)\right)_{\lambda}>-\frac{1}{N_{\lambda}},\\
-\infty & \textup{if }\left(\boldsymbol{P}_{\boldsymbol{K}_{\boldsymbol{N}}}\left(x^{(1)},\ldots,x^{(n)}\right)\right)_{\lambda}\leq-\frac{1}{N_{\lambda}}.
\end{cases}
\]
\item If $\left(\boldsymbol{P}_{\boldsymbol{K}_{\boldsymbol{N}}}\left(x^{(1)},\ldots,x^{(n)}\right)\right)_{\lambda}\in J{}_{l}$,
then $\hat{\gamma}_{K_{N}}>1$ is given by the unique value for which
$\left(\boldsymbol{P}_{\boldsymbol{K}_{\boldsymbol{N}}}\left(x^{(1)},\ldots,x^{(n)}\right)\right)_{\lambda}=m\left(\left(\hat{\boldsymbol{\gamma}}_{\boldsymbol{K}_{\boldsymbol{N}}}\left(x^{(1)},\ldots,x^{(n)}\right)\right)_{\lambda}\right)^{2}$
is satisfied.
\item If $\left(\boldsymbol{P}_{\boldsymbol{K}_{\boldsymbol{N}}}\left(x^{(1)},\ldots,x^{(n)}\right)\right)_{\lambda}\in J{}_{c}$,
then we say there is insufficient evidence in the sample to conclude
that $\beta_{\lambda}$ is significantly different from 1, and $\left(\hat{\boldsymbol{\gamma}}_{\boldsymbol{K}_{\boldsymbol{N}}}\left(x^{(1)},\ldots,x^{(n)}\right)\right)_{\lambda}\coloneq\textup{u}$
is undefined.
\end{enumerate}
\end{defn}

The next theorem generalises Theorem \ref{thm:properties_gamma_2}
and gives the properties of the estimator $\hat{\boldsymbol{\gamma}}_{\boldsymbol{K}_{\boldsymbol{N}}}$.
Recall Definitions \ref{def:alpha} and \ref{def:sub_sum}.
\begin{thm}
\label{thm:properties_gamma_K}The following statements hold:
\begin{enumerate}
\item For fixed $N\in\IN$, $\hat{\boldsymbol{\gamma}}_{\boldsymbol{K}_{\boldsymbol{N}}}\xrightarrow[n\rightarrow\infty]{\textup{p}}\tilde{\gamma}_{\boldsymbol{N}}$.
\item $\sqrt{n}\left(\hat{\boldsymbol{\gamma}}_{\boldsymbol{K}_{\boldsymbol{N}}}-\tilde{\gamma}_{\boldsymbol{N}}\right)\xrightarrow[n\rightarrow\infty]{\textup{d}}\mathcal{N}\left(0,\varDelta_{\boldsymbol{N}}\right)$,
where the covariance matrix $\varDelta_{\boldsymbol{N}}$ is diagonal.
For all $\lambda\in\IN_{M}$, we have:
\begin{enumerate}
\item If $\beta_{\lambda}\in I'_{h}$, then
\[
\left(\varDelta_{\boldsymbol{N}}\right)_{\lambda\lambda}=\left(1-\tilde{\gamma}_{N_{\lambda}}\right)^{4}\left(\frac{N_{\lambda}}{K_{\lambda,N_{\lambda}}-1}\right)^{2}\IV\frac{\Sigma_{\lambda,N_{\lambda}}^{2}}{K_{\lambda,N_{\lambda}}}
\]
and, if $\alpha_{\lambda}>0$, then
\begin{align*}
\lim_{N_{\lambda}\rightarrow\infty}\left(1-\tilde{\gamma}_{N_{\lambda}}\right)^{4}\left(\frac{N_{\lambda}}{K_{\lambda,N_{\lambda}}-1}\right)^{2}\IV_{\beta,N}\frac{\Sigma_{\lambda,N_{\lambda}}^{2}}{K_{\lambda,N_{\lambda}}} & =\frac{2\left(1-\beta_{\lambda}\right)^{2}\left(1-\left(1-\alpha_{\lambda}\right)\beta_{\lambda}\right)^{2}}{\alpha_{\lambda}^{2}}.
\end{align*}
\item If $\beta_{\lambda}\in I'_{l}$, then
\[
\left(\varDelta_{\boldsymbol{N}}\right)_{\lambda\lambda}=\left(\frac{K_{\lambda,N_{\lambda}}}{K_{\lambda,N_{\lambda}}-1}\right)^{2}\IV_{\beta,N}\left(\frac{\Sigma_{\lambda,N_{\lambda}}}{K_{\lambda,N_{\lambda}}}\right)^{2}\xrightarrow[N_{\lambda}\rightarrow\infty]{}0.
\]
\end{enumerate}
\end{enumerate}
\end{thm}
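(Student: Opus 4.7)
The plan is to reduce both parts of the theorem to the scalar ($M=1$) case and recover the vector statement componentwise. The essential structural input is that the Hamiltonian in (\ref{eq:CWM}) decouples as a sum over groups, so under $\IP_{\boldsymbol{\beta},\boldsymbol{N}}$ the blocks $(X_{\lambda 1},\ldots,X_{\lambda N_\lambda})$ for different $\lambda\in\IN_M$ are mutually independent. Consequently the components of $\boldsymbol{P}_{\boldsymbol{K}_{\boldsymbol{N}}}$ are independent, which immediately yields the diagonality of $\varDelta_{\boldsymbol{N}}$ asserted in part (2).

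For part (1), fix $\lambda$ and observe that $(\boldsymbol{P}_{\boldsymbol{K}_{\boldsymbol{N}}})_\lambda$ is the empirical mean of $n$ i.i.d.\ copies of the U-statistic kernel
\[
T_\lambda \coloneq \frac{\Sigma_{\lambda,N_\lambda}^{2}-K_{\lambda,N_\lambda}}{K_{\lambda,N_\lambda}(K_{\lambda,N_\lambda}-1)},
\]
whose expectation equals $\IE_{\beta_\lambda,N_\lambda}X_{\lambda 1}X_{\lambda 2}$ by exchangeability of spins within a group (Lemma \ref{lem:exchange}). The weak law of large numbers gives convergence in probability of $(\boldsymbol{P}_{\boldsymbol{K}_{\boldsymbol{N}}})_\lambda$ to this expectation. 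By Proposition \ref{prop:appr_correlations} (see Remark \ref{rem:intervals}), this limit lies strictly inside $J_h$ when $\beta_\lambda\in I_h$ and strictly inside $J_l$ when $\beta_\lambda\in I_l$, so with probability tending to one the sample is classified in the correct regime. On each of $J_h$ and $J_l$ the map $p\mapsto(\hat{\boldsymbol{\gamma}}_{\boldsymbol{K}_{\boldsymbol{N}}})_\lambda$ from Definition \ref{def:pair_K} is continuous (the smooth function $p\mapsto N_\lambda p/(N_\lambda p+1)$ on $J_h$, and the inverse of $\gamma\mapsto m(\gamma)^2$ on $J_l$, which is smooth since $m'>0$ for $\gamma>1$ by Lemma \ref{lem:m_beta_increasing}). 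The continuous mapping theorem then yields consistency to $\tilde{\gamma}_{N_\lambda}$ as specified by Definition \ref{def:gamma_tilde}.

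For part (2), the central limit theorem applied to the same i.i.d.\ sum gives
\[
\sqrt{n}\bigl(\boldsymbol{P}_{\boldsymbol{K}_{\boldsymbol{N}}}-\IE_{\boldsymbol{\beta},\boldsymbol{N}}\boldsymbol{P}_{\boldsymbol{K}_{\boldsymbol{N}}}\bigr)\xrightarrow[n\to\infty]{\textup{d}}\mathcal{N}\bigl(0,D_{\boldsymbol{N}}\bigr),
\]
with $D_{\boldsymbol{N}}$ diagonal and $(D_{\boldsymbol{N}})_{\lambda\lambda}=\IV T_\lambda=\IV(\Sigma_{\lambda,N_\lambda}^{2})/[K_{\lambda,N_\lambda}^{2}(K_{\lambda,N_\lambda}-1)^{2}]$. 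Applying the delta method with the regime-dependent inverse maps above propagates this to $\hat{\boldsymbol{\gamma}}_{\boldsymbol{K}_{\boldsymbol{N}}}$. In the high-temperature regime, differentiating $p\mapsto N_\lambda p/(N_\lambda p+1)$ at $p=\tilde{\gamma}_{N_\lambda}/((1-\tilde{\gamma}_{N_\lambda})N_\lambda)$ gives the factor $N_\lambda(1-\tilde{\gamma}_{N_\lambda})^{2}$, and the algebraic identity $\IV T_\lambda=\IV(\Sigma_{\lambda,N_\lambda}^{2}/K_{\lambda,N_\lambda})/(K_{\lambda,N_\lambda}-1)^{2}$ then produces exactly the stated form of $(\varDelta_{\boldsymbol{N}})_{\lambda\lambda}$. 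In the low-temperature regime, the derivative of the inverse of $\gamma\mapsto m(\gamma)^{2}$ remains bounded near $\tilde{\gamma}_{N_\lambda}\to\beta_\lambda>1$, and the identity $T_\lambda=\tfrac{K_{\lambda,N_\lambda}}{K_{\lambda,N_\lambda}-1}(\Sigma_{\lambda,N_\lambda}/K_{\lambda,N_\lambda})^{2}-\tfrac{1}{K_{\lambda,N_\lambda}-1}$ rewrites $\IV T_\lambda$ in the displayed form $(K_{\lambda,N_\lambda}/(K_{\lambda,N_\lambda}-1))^{2}\IV(\Sigma_{\lambda,N_\lambda}/K_{\lambda,N_\lambda})^{2}$.

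The main obstacle is the large-$N_\lambda$ asymptotic evaluation of the variances in (a) and the vanishing statement in (b); these require a sharp distributional description of $\Sigma_{\lambda,N_\lambda}$ under the CWM and go beyond the elementary delta-method skeleton above. In the high-temperature regime one needs a CLT for the sub-sum $\Sigma_{\lambda,N_\lambda}/\sqrt{K_{\lambda,N_\lambda}}$, asymptotically Gaussian with an explicit variance depending on $\alpha_\lambda$ and $\beta_\lambda$, so that $(\Sigma_{\lambda,N_\lambda}/\sqrt{K_{\lambda,N_\lambda}})^{2}$ is asymptotically a scaled $\chi_{1}^{2}$; combined with $\tilde{\gamma}_{N_\lambda}\to\beta_\lambda$ (from the definition of $\tilde{\gamma}_{N_\lambda}$ and the fact that $\IE X_1 X_2\sim\beta_\lambda/N_\lambda$ for $\beta_\lambda<1$) and $K_{\lambda,N_\lambda}/N_\lambda\to\alpha_\lambda$, this delivers the claimed limit $2(1-\beta_\lambda)^{2}(1-(1-\alpha_\lambda)\beta_\lambda)^{2}/\alpha_\lambda^{2}$. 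In the low-temperature regime one needs exponential concentration of $\Sigma_{\lambda,N_\lambda}/K_{\lambda,N_\lambda}$ at $\pm m(\beta_\lambda)$, so that $(\Sigma_{\lambda,N_\lambda}/K_{\lambda,N_\lambda})^{2}$ concentrates at $m(\beta_\lambda)^{2}$ and its variance vanishes. Both inputs are expected to follow from the large-population analysis underpinning Proposition \ref{prop:expec_Sigma}, and this is where the bulk of the technical work resides.
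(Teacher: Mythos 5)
Your proposal follows essentially the same route as the paper: componentwise reduction via the decoupling of groups, the weak law of large numbers plus the continuous mapping theorem for consistency, the central limit theorem plus the delta method (with the derivative $N_{\lambda}(1-\tilde{\gamma}_{N_{\lambda}})^{2}$ in the high-temperature regime and the inverse of $\gamma\mapsto m(\gamma)^{2}$ in the low-temperature regime), and the identity $\sum_{i\neq j}X_{i}X_{j}=\Sigma_{K_{N}}^{2}-K_{N}$ to rewrite the asymptotic variance in terms of $\IV\,\Sigma_{K_{N}}^{2}$. The final large-$N_{\lambda}$ evaluation of the variances, which you defer to the distributional analysis of $\Sigma_{\lambda,N_{\lambda}}$, is handled in the paper by the same appeal to Proposition \ref{prop:expec_Sigma} (together with a truncation via Lemma \ref{lem:conv_restr_sequence} to justify the delta method on a compact domain), so your argument matches the paper's in both structure and level of detail.
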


\subsection{Estimators Based on the Maximum Likelihood Optimality Condition}

In this subsection, we apply the optimality condition (\ref{eq:opt})
of the maximum likelihood estimator for $\boldsymbol{\beta}$ to the
situation where we only have access to the first $K_{\lambda,N_{\lambda}},1\leq K_{\lambda,N_{\lambda}}\leq N,$
votes from each group $\lambda$ on each observation. Then, instead
of (\ref{eq:opt}), we have the condition
\[
\IE_{\hat{\zeta_{\lambda}},N}\Sigma_{\lambda,N_{\lambda}}^{2}=\frac{1}{n}\sum_{t=1}^{n}\left(\sum_{i=1}^{K_{\lambda,N_{\lambda}}}x_{\lambda i}^{(t)}\right)^{2}
\]
for each $\lambda\in\IN_{M}$ for any sample $\left(x^{(1)},\ldots,x^{(n)}\right)\in\Omega_{\bar{K}_{\boldsymbol{N}}}^{n}$
yielding an estimator $\hat{\boldsymbol{\zeta}}$ for $\boldsymbol{\beta}$
we will define formally in Definition \ref{def:zeta^hat}. We need
an asymptotic approximation for the moment $\IE_{\hat{\zeta},N}\Sigma_{\lambda,N_{\lambda}}^{2}$,
which is supplied by Proposition \ref{prop:expec_Sigma}.
\begin{defn}
\label{def:TKN}We define the statistic $\boldsymbol{T}_{\boldsymbol{K}_{\boldsymbol{N}}}:\Omega_{\bar{K}_{\boldsymbol{N}}}^{n}\rightarrow\IR$
for any realisation of the sample $\left(x^{(1)},\ldots,x^{(n)}\right)\in\Omega_{\bar{K}_{\boldsymbol{N}}}^{n}$
by
\[
\boldsymbol{T}_{\boldsymbol{K}_{\boldsymbol{N}}}\left(x^{(1)},\ldots,x^{(n)}\right)\coloneq\frac{1}{n}\sum_{t=1}^{n}\left(\left(\sum_{i=1}^{K_{1,N_{1}}}x_{1i}^{(t)}\right)^{2},\ldots,\left(\sum_{i=1}^{K_{M,N_{M}}}x_{Mi}^{(t)}\right)^{2}\right).
\]
\end{defn}

\begin{defn}
\label{def:zeta^hat}Let $\boldsymbol{\beta}\geq0$, let $0\leq b_{1}<1<b_{2}$
be constants satisfying the condition (\ref{eq:separation-1}), and
consider the intervals from Definition \ref{def:intervals-1}. The
estimator $\hat{\boldsymbol{\zeta}}_{\boldsymbol{K}_{\boldsymbol{N}}}:\Omega_{\bar{K}_{\boldsymbol{N}}}^{n}\rightarrow\left[-\infty,\infty\right]$
is defined by
\begin{enumerate}
\item If $\left(\boldsymbol{T}_{\boldsymbol{K}_{\boldsymbol{N}}}\left(x^{(1)},\ldots,x^{(n)}\right)\right)_{\lambda}\in J'_{h}$,
then
\[
\left(\hat{\boldsymbol{\zeta}}_{\boldsymbol{K}_{\boldsymbol{N}}}\left(x^{(1)},\ldots,x^{(n)}\right)\right)_{\lambda}\coloneq\begin{cases}
\frac{K_{\lambda,N_{\lambda}}-\left(\boldsymbol{T}_{\boldsymbol{K}_{\boldsymbol{N}}}\left(x^{(1)},\ldots,x^{(n)}\right)\right)_{\lambda}}{K_{\lambda,N_{\lambda}}\left(1-\alpha_{\lambda}\right)-\left(\boldsymbol{T}_{\boldsymbol{K}_{\boldsymbol{N}}}\left(x^{(1)},\ldots,x^{(n)}\right)\right)_{\lambda}} & \textup{if }\left(\boldsymbol{T}_{\boldsymbol{K}_{\boldsymbol{N}}}\left(x^{(1)},\ldots,x^{(n)}\right)\right)_{\lambda}>-K_{\lambda,N_{\lambda}}\left(1-\alpha_{\lambda}\right),\\
-\infty & \textup{if }\left(\boldsymbol{T}_{\boldsymbol{K}_{\boldsymbol{N}}}\left(x^{(1)},\ldots,x^{(n)}\right)\right)_{\lambda}\leq-K_{\lambda,N_{\lambda}}\left(1-\alpha_{\lambda}\right).
\end{cases}
\]
\item If $\left(\boldsymbol{T}_{\boldsymbol{K}_{\boldsymbol{N}}}\left(x^{(1)},\ldots,x^{(n)}\right)\right)_{\lambda}\in J'_{l}$,
then $\left(\hat{\boldsymbol{\zeta}}_{\boldsymbol{K}_{\boldsymbol{N}}}\left(x^{(1)},\ldots,x^{(n)}\right)\right)_{\lambda}>1$
is given by the unique value for which $\left(\boldsymbol{T}_{\boldsymbol{K}_{\boldsymbol{N}}}\left(x^{(1)},\ldots,x^{(n)}\right)\right)_{\lambda}=m\left(\left(\hat{\boldsymbol{\zeta}}_{\boldsymbol{K}_{\boldsymbol{N}}}\left(x^{(1)},\ldots,x^{(n)}\right)\right)_{\lambda}\right)^{2}K_{N}^{2}$
is satisfied.
\item If $\left(\boldsymbol{T}_{\boldsymbol{K}_{\boldsymbol{N}}}\left(x^{(1)},\ldots,x^{(n)}\right)\right)_{\lambda}\in J'_{c}$,
then we say there is insufficient evidence in the sample to conclude
that $\beta_{\lambda}$ is significantly different from 1, and $\left(\hat{\boldsymbol{\zeta}}_{\boldsymbol{K}_{\boldsymbol{N}}}\left(x^{(1)},\ldots,x^{(n)}\right)\right)_{\lambda}\coloneq\textup{u}$
is undefined.
\end{enumerate}
\end{defn}

\begin{rem}
If $\alpha_{\lambda}=0$, then in the high temperature regime $\beta_{\lambda}<1$,
we do not obtain an estimator for $\beta_{\lambda}$ as the value
of $\left(\hat{\boldsymbol{\zeta}}_{\boldsymbol{K}_{\boldsymbol{N}}}\right)_{\lambda}$
will be equal to 1 for any $\left(x^{(1)},\ldots,x^{(n)}\right)\in\Omega_{K_{N}}^{n}$
with $\left(\boldsymbol{T}_{\boldsymbol{K}_{\boldsymbol{N}}}\left(x^{(1)},\ldots,x^{(n)}\right)\right)_{\lambda}\in J'_{h}$.
The reason for this is that for $\alpha_{\lambda}=0$, the sequence
of random variables $\left(\frac{\Sigma_{K_{N}}^{2}}{K_{N}}\right)_{N\in\IN}$
converges in distribution to $\mathcal{N}\left(0,1\right)$, and thus
the asymptotic approximation for $\IE_{\beta_{\lambda},N_{\lambda}}\Sigma_{\lambda,N_{\lambda}}^{2}$
in Proposition \ref{prop:expec_Sigma} contains no information regarding
$\beta_{\lambda}$. The same problem does not arise in the low temperature
regime $\beta_{\lambda}>1$, where even if $\alpha_{\lambda}=0$,
we can estimate $\beta_{\lambda}$ using the estimator $\hat{\zeta}_{K_{N}}$.
If $\alpha_{\lambda}=1$, we recover the estimator from Definition
10 in \cite{BalNauTo2025} based on the entire population in both
regimes, in the sense that the estimator $\hat{\boldsymbol{\zeta}}_{\boldsymbol{K}_{\boldsymbol{N}}}$
has the same functional and is asymptotically equal to said estimator.
This means that discarding an asymptotically insignificant number
of votes from the observations does not affect the estimation of $\beta_{\lambda}$
via the maximum likelihood principle.
\end{rem}

\begin{defn}
\label{def:zeta_tilde}Let the intervals $I'_{h}$ and $I'_{l}$ be
as in Definition \ref{def:intervals-1}. Let, for all $N\in\IN$ and
all $\beta\in I'_{h}\cup I'_{l}$, $\tilde{\zeta}_{N}\geq0$ be the
value which satisfies
\[
\IE_{\beta,N}\Sigma_{1,N}^{2}=\begin{cases}
\frac{1-\left(1-\alpha\right)\tilde{\zeta}_{N}}{1-\tilde{\zeta}_{N}}K_{1,N} & \textup{if }\beta\in I'_{h},\\
m\left(\tilde{\zeta}_{N}\right)^{2}K_{1,N}^{2} & \textup{if }\beta\in I'_{l}.
\end{cases}
\]
Set the vector $\tilde{\boldsymbol{\zeta}}_{\boldsymbol{N}}$ equal
to
\[
\left(\tilde{\zeta}_{N_{1}},\ldots,\tilde{\zeta}_{N_{M}}\right).
\]
\end{defn}

\begin{thm}
\label{thm:properties_zeta_K}Suppose $\alpha>0$. Then the following
statements hold:
\begin{enumerate}
\item For fixed $N\in\IN$, $\hat{\boldsymbol{\zeta}}_{\boldsymbol{K}_{\boldsymbol{N}}}\xrightarrow[n\rightarrow\infty]{\textup{p}}\tilde{\boldsymbol{\zeta}}_{\boldsymbol{N}}$.
\item $\sqrt{n}\left(\hat{\boldsymbol{\zeta}}_{\boldsymbol{K}_{\boldsymbol{N}}}-\tilde{\boldsymbol{\zeta}}_{\boldsymbol{N}}\right)\xrightarrow[n\rightarrow\infty]{\textup{d}}\mathcal{N}\left(0,\Psi_{\boldsymbol{N}}\right)$,
where the covariance matrix $\Psi_{\boldsymbol{N}}$ is diagonal.
The following statements hold:
\begin{enumerate}
\item Let $\beta_{\lambda}\in I'_{h}$. Then
\begin{align*}
\left(\Psi_{\boldsymbol{N}}\right)_{\lambda\lambda} & =\left(1-\tilde{\zeta}_{N}\right)^{4}\left(\frac{N}{K_{\lambda,N_{\lambda}}-1}\right)^{2}\IV\frac{\Sigma_{\lambda,N_{\lambda}}^{2}}{K_{\lambda,N_{\lambda}}}\xrightarrow[N_{\lambda}\rightarrow\infty]{}\frac{2\left(1-\beta_{\lambda}\right)^{2}\left(1-\left(1-\alpha_{\lambda}\right)\beta_{\lambda}\right)^{2}}{\alpha_{\lambda}^{2}}.
\end{align*}
\item Let $\beta_{\lambda}\in I'_{l}$. Then
\begin{align*}
\left(\Psi_{\boldsymbol{N}}\right)_{\lambda\lambda}=\frac{1}{\left(2m\left(\beta_{\lambda}\right)m'\left(\beta_{\lambda}\right)\right)^{2}}\IV\left(\frac{\Sigma_{\lambda,N_{\lambda}}}{K_{\lambda,N_{\lambda}}}\right)^{2} & \xrightarrow[N_{\lambda}\rightarrow\infty]{}0.
\end{align*}
\end{enumerate}
\end{enumerate}
\end{thm}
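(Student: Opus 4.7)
The estimator $\hat{\boldsymbol{\zeta}}_{\boldsymbol{K}_{\boldsymbol{N}}}$ is defined componentwise as $(\hat{\boldsymbol{\zeta}}_{\boldsymbol{K}_{\boldsymbol{N}}})_{\lambda} = g_{\lambda}((\boldsymbol{T}_{\boldsymbol{K}_{\boldsymbol{N}}})_{\lambda})$, where $g_{\lambda}$ is a smooth function on the interior of $J'_{h}$ (an explicit rational function) or $J'_{l}$ (defined implicitly as the inverse of $\zeta \mapsto m(\zeta)^{2} K_{\lambda,N_{\lambda}}^{2}$). Moreover, $(\boldsymbol{T}_{\boldsymbol{K}_{\boldsymbol{N}}})_{\lambda}$ is the empirical mean of $n$ independent copies of $\Sigma_{\lambda,N_{\lambda}}^{2}$, one per observation. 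The overall plan is to deduce both parts by combining classical limit theorems for $\boldsymbol{T}_{\boldsymbol{K}_{\boldsymbol{N}}}$ with the continuous mapping theorem for Part 1 and the multivariate delta method for Part 2, applied componentwise via the functions $g_{\lambda}$.

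For Part 1, the weak law of large numbers gives $(\boldsymbol{T}_{\boldsymbol{K}_{\boldsymbol{N}}})_{\lambda} \xrightarrow[n\rightarrow\infty]{\textup{p}} \IE_{\beta_{\lambda},N_{\lambda}}\Sigma_{\lambda,N_{\lambda}}^{2}$. By Remark \ref{rem:intervals-1} (via Proposition \ref{prop:expec_Sigma}), this expected value lies in the interior of $J'_{k}$ for the appropriate $k\in\{h,l\}$ determined by $\beta_{\lambda}$. Hence, with probability converging to one, the sample statistic falls in that open interval, so the definition of $\hat{\boldsymbol{\zeta}}_{\boldsymbol{K}_{\boldsymbol{N}}}$ selects the continuous functional $g_{\lambda}$ for that regime. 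Since $\tilde{\zeta}_{N_{\lambda}}$ is by Definition \ref{def:zeta_tilde} precisely $g_{\lambda}(\IE\Sigma_{\lambda,N_{\lambda}}^{2})$, the continuous mapping theorem yields $(\hat{\boldsymbol{\zeta}}_{\boldsymbol{K}_{\boldsymbol{N}}})_{\lambda} \xrightarrow[n\rightarrow\infty]{\textup{p}} \tilde{\zeta}_{N_{\lambda}}$. The continuity and strict monotonicity of $m$ on $(1,\infty)$ needed in the low-temperature regime follow from Lemma \ref{lem:m_beta_increasing}.

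For Part 2, I apply the multivariate central limit theorem to $\sqrt{n}(\boldsymbol{T}_{\boldsymbol{K}_{\boldsymbol{N}}} - \IE\boldsymbol{T}_{\boldsymbol{K}_{\boldsymbol{N}}})$. The limiting covariance matrix is diagonal because, under the Curie-Weiss measure (\ref{eq:CWM}), the Hamiltonian decomposes as a sum of per-group terms; this makes $Z_{\boldsymbol{\beta},\boldsymbol{N}}$ factor and the joint law a product across the $M$ groups, so the coordinates $(\Sigma_{\lambda,N_{\lambda}}^{2})_{\lambda}$ are independent. The Gaussian limit is therefore diagonal with entries $\IV_{\beta_{\lambda},N_{\lambda}}\Sigma_{\lambda,N_{\lambda}}^{2}$. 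A componentwise delta method then yields the asymptotic variance of $(\hat{\boldsymbol{\zeta}}_{\boldsymbol{K}_{\boldsymbol{N}}})_{\lambda}$: in $J'_{h}$, direct differentiation of the rational function $g_{\lambda}$ together with the identity $\IV\Sigma^{2} = K^{2}\IV(\Sigma^{2}/K)$ produces the stated formula for $(\Psi_{\boldsymbol{N}})_{\lambda\lambda}$; in $J'_{l}$, the inverse function theorem applied to $\zeta \mapsto m(\zeta)^{2} K^{2}$ gives $g'_{\lambda}(\IE\Sigma^{2}) = 1/(2 m(\tilde{\zeta}_{N_{\lambda}}) m'(\tilde{\zeta}_{N_{\lambda}}) K^{2})$, producing the formula in Part 2(b). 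The limits as $N_{\lambda}\to\infty$ then follow by substituting the asymptotics of $\tilde{\zeta}_{N_{\lambda}}$ and of $\IV(\Sigma_{\lambda,N_{\lambda}}^{2}/K_{\lambda,N_{\lambda}})$ from Proposition \ref{prop:expec_Sigma}. The hypothesis $\alpha > 0$ is needed in the high-temperature regime, where $g'_{\lambda}$ depends on $1/\alpha_{\lambda}$ and degenerates otherwise, in line with the remark following Definition \ref{def:zeta^hat}.

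The main obstacle will be establishing the low-temperature asymptotic $\IV(\Sigma_{\lambda,N_{\lambda}}/K_{\lambda,N_{\lambda}})^{2} \to 0$ used in Part 2(b). This requires showing that $\Sigma_{\lambda,N_{\lambda}}/K_{\lambda,N_{\lambda}}$ concentrates on $\pm m(\beta_{\lambda})$ when $\beta_{\lambda} > 1$, so that its square concentrates at $m(\beta_{\lambda})^{2}$ and its variance vanishes. This is a consequence of the classical Curie-Weiss large deviation principle for the magnetisation in the ordered phase, transferred to the subset sum via the exchangeability of spins within each group (Lemma \ref{lem:exchange}). A secondary, purely algebraic difficulty is to match the exact finite-$N$ factor in the high-temperature formula for $(\Psi_{\boldsymbol{N}})_{\lambda\lambda}$, which requires careful bookkeeping with the precise form of $\IE_{\beta_{\lambda},N_{\lambda}}\Sigma_{\lambda,N_{\lambda}}^{2}$ in terms of $\tilde{\zeta}_{N_{\lambda}}$ rather than merely its leading-order asymptotic.
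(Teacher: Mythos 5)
Your proposal is correct and follows essentially the same route as the paper, which in fact never writes out a separate proof of this theorem but relies on the analogy with Theorems \ref{thm:properties_gamma_2} and \ref{thm:properties_gamma_K}: weak law plus continuous mapping for consistency, CLT plus a componentwise delta method for the fluctuations, diagonality from the product structure across groups, and Proposition \ref{prop:expec_Sigma} for the $N_{\lambda}\rightarrow\infty$ limits. The only point you gloss over that the paper treats explicitly is the truncation needed to justify the delta method when $\boldsymbol{T}_{\boldsymbol{K}_{\boldsymbol{N}}}$ falls outside the relevant compact domain (handled there via Lemma \ref{lem:conv_restr_sequence} together with a large-deviation bound on the exceptional event), while the ``main obstacle'' you anticipate in the low-temperature regime is already discharged by the moment asymptotics of Proposition \ref{prop:expec_Sigma} with $k=1,2$, with no separate large-deviation argument required.
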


\subsection{Large Population Asymptotic Equivalence of the Estimators $\hat{\boldsymbol{\gamma}}_{\boldsymbol{K}_{\boldsymbol{N}}}$
and $\hat{\boldsymbol{\zeta}}_{\boldsymbol{K}_{\boldsymbol{N}}}$}

We show how the estimators based on a subset of votes that use the
empirical pair correlations and those that use the maximum likelihood
condition are asymptotically equivalent as the group populations $N_{\lambda}$
become very large.

In preparation for Theorem \ref{thm:asymp_equiv}, we define some
sets.
\begin{defn}
\label{def:sets}Assume $\alpha\in\left(0,1\right)$. Let $b>0$ and
define the following sets:
\begin{align*}
A_{N_{\lambda},n}\left(\lambda\right) & \coloneq\left\{ x\in\Omega_{\bar{K}_{\boldsymbol{N}}}^{n}\,|\,\left(\boldsymbol{P}_{\boldsymbol{K}_{\boldsymbol{N}}}\left(x\right)\right)_{\lambda}\in\left(-\infty,-\frac{1}{N_{\lambda}}\right]\right\} ,\\
A'_{N_{\lambda},n}\left(\lambda\right) & \coloneq\left\{ x\in\Omega_{\bar{K}_{\boldsymbol{N}}}^{n}\,|\,\left(\boldsymbol{T}_{\boldsymbol{K}_{\boldsymbol{N}}}\left(x\right)\right)_{\lambda}\in\left(-\infty,-K_{\lambda,N_{\lambda}}\left(1-\alpha_{\lambda}\right)\right]\right\} ,\\
B_{N_{\lambda},n}\left(\lambda\right) & \coloneq\left\{ x\in\Omega_{\bar{K}_{\boldsymbol{N}}}^{n}\,|\,\left(\boldsymbol{P}_{\boldsymbol{K}_{\boldsymbol{N}}}\left(x\right)\right)_{\lambda}\in\left(-\frac{b}{1+b}\frac{1}{N_{\lambda}},\infty\right)\cap J_{h}\right\} ,\\
B'_{N_{\lambda},n}\left(\lambda\right) & \coloneq\left\{ x\in\Omega_{\bar{K}_{\boldsymbol{N}}}^{n}\,|\,\left(\boldsymbol{T}_{\boldsymbol{K}_{\boldsymbol{N}}}\left(x\right)\right)_{\lambda}\in\left(-K_{\lambda,N_{\lambda}}\frac{1+\left(1-\alpha_{\lambda}\right)b}{1+b},\infty\right)\cap J'_{h}\right\} ,
\end{align*}
and
\[
H_{N_{\lambda},n}\left(\lambda\right)\coloneq B_{N_{\lambda},n}\left(\lambda\right)\cap B'_{N,n}\left(\lambda\right).
\]
Next, let
\begin{align*}
D_{N_{\lambda},n}\left(\lambda\right) & \coloneq\left\{ x\in\Omega_{\bar{K}_{\boldsymbol{N}}}^{n}\,|\,\left(\boldsymbol{P}_{\boldsymbol{K}_{\boldsymbol{N}}}\left(x\right)\right)_{\lambda}\in\left[m\left(b_{2}\right)^{2},m\left(b\right)^{2}\right]\right\} ,\\
D'_{N_{\lambda},n}\left(\lambda\right) & \coloneq\left\{ x\in\Omega_{\bar{K}_{\boldsymbol{N}}}^{n}\,|\,\left(\boldsymbol{T}_{\boldsymbol{K}_{\boldsymbol{N}}}\left(x\right)\right)_{\lambda}\in K_{\lambda,N_{\lambda}}^{2}\left[m\left(b_{2}\right)^{2},m\left(b\right)^{2}\right]\right\} ,
\end{align*}
and
\[
L_{N_{\lambda},n}\left(\lambda\right)\coloneq D_{N_{\lambda},n}\left(\lambda\right)\cap D'_{N_{\lambda},n}\left(\lambda\right).
\]
\end{defn}

\begin{thm}
\label{thm:asymp_equiv}Let $\left(K_{\lambda,N_{\lambda}}\right)_{N_{\lambda}\in\IN}$,
$\lambda\in\IN_{M}$, be divergent sequences with the properties $K_{\lambda,N_{\lambda}}\in\IN$
and $1\leq K_{\lambda,N_{\lambda}}\leq N_{\lambda}$, $N_{\lambda}\in\IN$.
Let $\lambda\in\IN_{M}$ and assume $\alpha_{\lambda}\in\left(0,1\right)$,
$\beta_{\lambda}\in I_{h}$ . Fix $b>0$, and consider the sets from
Definition \ref{def:sets}. Then
\[
\hat{\gamma}_{K_{N}}\left(x\right)=\hat{\zeta}_{K_{N}}\left(x\right)=-\infty,\quad x\in A_{N_{\lambda},n}\left(\lambda\right)\cap A'_{N_{\lambda},n}\left(\lambda\right),
\]
and
\[
\sup_{n\in\IN}\sup_{x\in H_{N_{\lambda},n}\left(\lambda\right)}\left|\left(\hat{\boldsymbol{\gamma}}_{\boldsymbol{K}_{\boldsymbol{N}}}\left(x\right)\right)_{\lambda}-\left(\hat{\boldsymbol{\zeta}}_{\boldsymbol{K}_{\boldsymbol{N}}}\left(x\right)\right)_{\lambda}\right|\leq\frac{1}{\alpha_{\lambda}}\left(1+b\right)^{2}\frac{b_{1}}{1-b_{1}}\left|\alpha_{\lambda}-\frac{K_{\lambda,N_{\lambda}}}{N_{\lambda}}+\frac{1}{N_{\lambda}}\right|.
\]

Let $\beta_{\lambda}\in I_{l}$, fix $b>\beta_{\lambda}$, and consider
the sets from Definition \ref{def:sets}. Then
\[
\sup_{n\in\IN}\sup_{x\in L_{N_{\lambda},n}\left(\lambda\right)}\left|\left(\hat{\boldsymbol{\gamma}}_{\boldsymbol{K}_{\boldsymbol{N}}}\left(x\right)\right)_{\lambda}-\hat{\boldsymbol{\zeta}}_{\boldsymbol{K}_{\boldsymbol{N}}}\left(x\right)\right|\leq C\frac{1}{K_{\lambda,N_{\lambda}}}.
\]
\end{thm}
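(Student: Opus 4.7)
The backbone of both claims is the algebraic identity
\[
\bigl(\boldsymbol{T}_{\boldsymbol{K}_{\boldsymbol{N}}}\bigr)_{\lambda} \;=\; K_{\lambda,N_{\lambda}} + K_{\lambda,N_{\lambda}}\bigl(K_{\lambda,N_{\lambda}}-1\bigr)\bigl(\boldsymbol{P}_{\boldsymbol{K}_{\boldsymbol{N}}}\bigr)_{\lambda},
\]
which follows from $\bigl(x_{\lambda i}^{(t)}\bigr)^{2}=1$ upon expanding each squared sum. For brevity I write $K:=K_{\lambda,N_{\lambda}}$, $N:=N_{\lambda}$, $\alpha:=\alpha_{\lambda}$, $\theta:=(K-1)/N$, $P:=(\boldsymbol{P}_{\boldsymbol{K}_{\boldsymbol{N}}})_{\lambda}$, and $T:=(\boldsymbol{T}_{\boldsymbol{K}_{\boldsymbol{N}}})_{\lambda}$, and treat the two regimes in turn.

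For the high-temperature regime $\beta_{\lambda}\in I_{h}$, the set $A'_{N_{\lambda},n}(\lambda)$ is empty because $T\geq 0$ as an average of squares, whereas membership would force $T\leq-K(1-\alpha)<0$ under $\alpha\in(0,1)$; hence $A\cap A'=\emptyset$ and the first identity is vacuously true. On $H_{N_{\lambda},n}(\lambda)$ both estimators use branch~1 of their definitions, and the algebraic identity simplifies the formula for $\hat{\boldsymbol{\zeta}}$ to
\[
(\hat{\boldsymbol{\zeta}}_{\boldsymbol{K}_{\boldsymbol{N}}})_{\lambda} \;=\; \frac{(K-1)P}{\alpha+(K-1)P} \;=\; g\!\Bigl(\frac{\theta NP}{\alpha}\Bigr), \qquad (\hat{\boldsymbol{\gamma}}_{\boldsymbol{K}_{\boldsymbol{N}}})_{\lambda} \;=\; g(NP),
\]
where $g(x):=x/(x+1)$ has derivative $g'(x)=(x+1)^{-2}$ on $\IR\setminus\{-1\}$. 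The mean value theorem for $g$ between $NP$ and $\theta NP/\alpha$ produces some $\xi$ with
\[
(\hat{\boldsymbol{\gamma}}_{\boldsymbol{K}_{\boldsymbol{N}}})_{\lambda}-(\hat{\boldsymbol{\zeta}}_{\boldsymbol{K}_{\boldsymbol{N}}})_{\lambda} \;=\; \frac{1}{(\xi+1)^{2}}\Bigl(NP-\tfrac{\theta NP}{\alpha}\Bigr) \;=\; \frac{NP}{\alpha\,(\xi+1)^{2}}\bigl(\alpha-K/N+1/N\bigr),
\]
so the scale factor $\alpha-K/N+1/N$ emerges explicitly. It remains to bound the prefactor $|NP|/\bigl(\alpha(\xi+1)^{2}\bigr)$ by $(1+b)^{2}b_{1}/\bigl(\alpha(1-b_{1})\bigr)$. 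Membership in $B$ gives $NP>-b/(1+b)$, hence $NP+1>1/(1+b)$; the upper endpoint of $J_{h}$ (modulo the negligible $\boldsymbol{C}_{\textup{high}}(\ln N/N)^{2}$ correction) gives $NP\leq b_{1}/(1-b_{1})$; and the constraint from $B'$, transported through $T=K+K(K-1)P$, gives the parallel bound $\theta NP/\alpha+1\geq 1/(1+b)$. Consequently $(\xi+1)^{2}\geq 1/(1+b)^{2}$ and the prefactor is at most $(1+b)^{2}b_{1}/\bigl(\alpha(1-b_{1})\bigr)$, yielding the claim; the supremum over $n$ and $x\in H_{N_{\lambda},n}(\lambda)$ is automatic since no bound depends on $n$ or on $x$ beyond set membership.

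For the low-temperature regime $\beta_{\lambda}\in I_{l}$, both estimators are determined by branch~2 via $P=m\bigl((\hat{\boldsymbol{\gamma}}_{\boldsymbol{K}_{\boldsymbol{N}}})_{\lambda}\bigr)^{2}$ and $T/K^{2}=m\bigl((\hat{\boldsymbol{\zeta}}_{\boldsymbol{K}_{\boldsymbol{N}}})_{\lambda}\bigr)^{2}$. Substituting the algebraic identity yields
\[
m\!\bigl((\hat{\boldsymbol{\zeta}}_{\boldsymbol{K}_{\boldsymbol{N}}})_{\lambda}\bigr)^{2}-m\!\bigl((\hat{\boldsymbol{\gamma}}_{\boldsymbol{K}_{\boldsymbol{N}}})_{\lambda}\bigr)^{2} \;=\; \frac{T}{K^{2}}-P \;=\; \frac{K^{2}-T}{K^{2}(K-1)},
\]
and the trivial range $T\in[0,K^{2}]$ gives $\bigl|m(\hat{\boldsymbol{\zeta}})^{2}-m(\hat{\boldsymbol{\gamma}})^{2}\bigr|\leq 1/(K-1)$. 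On $L_{N_{\lambda},n}(\lambda)$ the statistics $P$ and $T/K^{2}$ both lie in $[m(b_{2})^{2},m(b)^{2}]$, so by strict monotonicity of $m$ on $(1,\infty)$ (Lemma~\ref{lem:m_beta_increasing}) both estimators lie in the compact interval $[b_{2},b]$. The function $\phi(\beta):=m(\beta)^{2}$ is $C^{1}$ with derivative $\phi'(\beta)=2m(\beta)m'(\beta)>0$ on $(1,\infty)$, hence bounded below by a positive constant $c=c(b_{2},b)$ on $[b_{2},b]$. A final application of the mean value theorem gives
\[
\bigl|(\hat{\boldsymbol{\gamma}}_{\boldsymbol{K}_{\boldsymbol{N}}})_{\lambda}-(\hat{\boldsymbol{\zeta}}_{\boldsymbol{K}_{\boldsymbol{N}}})_{\lambda}\bigr| \;\leq\; \frac{\bigl|\phi(\hat{\boldsymbol{\zeta}})-\phi(\hat{\boldsymbol{\gamma}})\bigr|}{c} \;\leq\; \frac{1}{c(K-1)} \;\leq\; \frac{C}{K},
\]
with $C=2/c$, uniformly in $n$ and in $x\in L_{N_{\lambda},n}(\lambda)$, as claimed.

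The genuinely delicate step is the uniform lower bound $\xi+1\geq 1/(1+b)$ in the high-temperature case: when $\theta>\alpha$, the argument $\theta NP/\alpha$ can in principle be closer to $-1$ than $NP$, and one must exploit the precise lower bound defining $B'$ (transported through the identity linking $P$ and $T$) to exclude this scenario and produce the clean factor $(1+b)^{2}$ in the theorem's constant. The low-temperature argument, by contrast, is a clean mean-value-theorem estimate once the algebraic identity has been applied.
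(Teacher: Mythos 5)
Your proof is correct in substance and follows essentially the same route as the paper's: the same algebraic identity $T=K+K(K-1)P$, the mean value theorem applied to $t\mapsto t/(t+1)$ with the derivative controlled via the constraints defining $B$ and $B'$ in the high-temperature case, and the $O(1/K)$ gap between $m(\hat{\gamma})^{2}$ and $m(\hat{\zeta})^{2}$ followed by a mean-value/inverse-function bound on the compact interval $[b_{2},b]$ in the low-temperature case. Your single application of the MVT to $g(x)=x/(x+1)$ between $NP$ and $(K-1)P/\alpha$ is a slightly more streamlined packaging of the paper's two-step estimate (first bounding the difference of the quantities $\hat{\gamma}/(1-\hat{\gamma})$ and $\hat{\zeta}/(1-\hat{\zeta})$, then pushing through $f$), and your constant $1/(K-1)$ is marginally sharper than the paper's $2/(K-1)$; neither difference is material.

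The one point to fix is your treatment of the first claim. You declare $A'_{N_{\lambda},n}(\lambda)$ empty by reading the threshold $-K_{\lambda,N_{\lambda}}(1-\alpha_{\lambda})$ in Definition \ref{def:sets} literally, so that the assertion on $A\cap A'$ becomes vacuous. But the paper's own Lemma \ref{lem:minus_infty} shows that $\hat{\zeta}_{K_{N}}(x)=-\infty$ precisely when $T_{K_{N}}(x)\leq K_{N}(1-\alpha)$, a \emph{positive} threshold; the minus signs in Definition \ref{def:zeta^hat} and in the definitions of $A'$ and $B'$ are evidently sign slips, and under the intended reading $A\cap A'$ is nonempty and the claim has content (it then follows directly from Lemma \ref{lem:minus_infty}, which is how the paper argues). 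Notice that your own main estimate already uses the corrected version: the lower bound $(K-1)P/\alpha+1\geq 1/(1+b)$ that you ``transport through'' $B'$ only follows from the positive threshold $T>K\frac{1+(1-\alpha)b}{1+b}$; under the literal negative threshold $B'$ would impose no constraint and your bound on $(\xi+1)^{-2}$ would fail. So you should adopt the positive-threshold reading uniformly, in which case the first claim is no longer vacuous but is an immediate consequence of Lemma \ref{lem:minus_infty}, and the rest of your argument stands as written.
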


\begin{rem}
\begin{enumerate}
\item The reason we need to restrict the range over which we supply a uniform
upper bound for both the high temperature and the low temperature
regime is technical: the derivative of the function $t\mapsto\frac{t}{t+1}$
is unbounded above on the infinite interval $\left(-\infty,0\right)$
and so is the derivative of the function $m^{-1}$ on $\left(0,\infty\right)$.
\item As a consequence of the above, no uniform upper bound for the Euclidean
distance between the estimators $\hat{\boldsymbol{\gamma}}_{\boldsymbol{K}_{\boldsymbol{N}}}$
and $\hat{\boldsymbol{\zeta}}_{\boldsymbol{K}_{\boldsymbol{N}}}$
over all possible samples exists. We note that, as a corollary to
the respective first statements of Theorems \ref{thm:properties_gamma_K}
and \ref{thm:properties_zeta_K},
\[
\left\Vert \hat{\boldsymbol{\gamma}}_{\boldsymbol{K}_{\boldsymbol{N}}}-\hat{\boldsymbol{\zeta}}_{\boldsymbol{K}_{\boldsymbol{N}}}\right\Vert \xrightarrow[n\rightarrow\infty]{\textup{p}}0
\]
holds.
\item By Definition \ref{def:alpha}, $\left|\alpha_{\lambda}-\frac{K_{\lambda,N_{\lambda}}}{N_{\lambda}}\right|\xrightarrow[N\rightarrow\infty]{}0$
is satisfied. The convergence speed of $\left|\alpha_{\lambda}-\frac{K_{\lambda,N_{\lambda}}}{N_{\lambda}}\right|$
matters for the upper bound for $\left\Vert \hat{\boldsymbol{\gamma}}_{\boldsymbol{K}_{\boldsymbol{N}}}\left(x\right)-\hat{\boldsymbol{\zeta}}_{\boldsymbol{K}_{\boldsymbol{N}}}\left(x\right)\right\Vert $
which is of order $O\left(\left|\alpha_{\lambda}-\frac{K_{\lambda,N_{\lambda}}}{N_{\lambda}}\right|+\frac{1}{N_{\lambda}}\right)$.
In the low temperature regime, the limit $\alpha_{\lambda}$ of the
proportion of votes we have access to does not matter, and nor does
the convergence speed of \textbf{$\left|\alpha_{\lambda}-\frac{K_{\lambda,N_{\lambda}}}{N_{\lambda}}\right|$}
to 0.
\end{enumerate}
\end{rem}

\section{\label{sec:Proofs}Proofs of Theorems \ref{thm:properties_gamma_K},
\ref{thm:properties_zeta_K}, and \ref{thm:asymp_equiv}}

Due to the non-interaction assumption in Definition \ref{def:CWM},
we can treat the estimation of the coupling constant belonging to
a single group separately. To simplify the somewhat involved notation
of the variables we use in the article, we omit the subindex identifying
the group in question. Thus we will write $T_{K_{N}}$ instead of
$\left(\boldsymbol{T}_{\boldsymbol{K}_{\boldsymbol{N}}}\left(x^{(1)},\ldots,x^{(n)}\right)\right)_{\lambda}$,
$\Sigma_{N}$ for $\Sigma_{\lambda,N_{\lambda}}$, etc. 

\subsection{Asymptotic Behaviour of the Partial Sums $\Sigma_{K_{N}}^{2k}$}
\begin{prop}
\label{prop:expec_Sigma}For all $\beta\in\IR,\beta\neq1$, and all
$k\in\IN$, the moment $\IE_{\beta,N}\Sigma_{\lambda,N_{\lambda}}^{2k}$
is asymptotically equal to
\[
\IE_{\beta,N}\Sigma_{N}^{2k}\approx\begin{cases}
\left(\frac{1-\left(1-\alpha\right)\beta}{1-\beta}\right)^{k}\,K_{\lambda,N_{\lambda}}^{k} & \text{if }\,\beta<1,\\
m\left(\beta\right)^{2k}\,K_{\lambda,N_{\lambda}}^{2k} & \text{if }\,\beta>1,
\end{cases}
\]
where $0<m\left(\beta\right)<1$ for $\beta>1$ is the constant from
Definition \ref{def:m_beta}.

Let the model be in the high temperature regime, i.e. $\beta<1$.
Then there is a positive constant $\boldsymbol{D}_{\textup{high}}$
such that for all $N\in\IN$
\[
\left|\IE_{\beta,N}\frac{\Sigma_{K_{N}}^{2k}}{K_{N}^{k}}-\left(\frac{1-\left(1-\alpha\right)\beta}{1-\beta}\right)^{k}\right|<\boldsymbol{D}_{\textup{high}}\frac{1}{\sqrt{K_{N}}}.
\]
Now let the model be in the low temperature regime, i.e. $\beta>1$.
Then there is a positive constant $\boldsymbol{D}_{\textup{low}}$
such that for all $N\in\IN$
\[
\left|\IE_{\beta,N}\frac{\Sigma_{K_{N}}^{2k}}{K_{N}^{2k}}-m\left(\beta\right)^{2k}\right|<\boldsymbol{D}_{\textup{low}}\frac{\left(\ln N\right)^{\frac{3}{2}}}{\sqrt{N}}.
\]
\end{prop}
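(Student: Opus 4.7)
My strategy is to represent the Curie--Weiss measure as a mixture of product Bernoulli laws via the Hubbard--Stratonovich transformation, which reduces the computation of $\IE_{\beta,N}\Sigma_{K_N}^{2k}$ to a one-dimensional Laplace integral over an auxiliary variable $Y$. The two regimes in the statement correspond directly to the qualitative structure of the rate function $F(y) = \frac{y^{2}}{2\beta} - \ln(2\cosh y)$, which governs the density of $Y$.

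\textbf{Step 1 (mixture representation).} Using the Gaussian identity
\[
\exp\!\Big(\tfrac{\beta}{2N}s^{2}\Big) = \sqrt{\tfrac{N}{2\pi\beta}}\int_{\IR}\exp\!\Big({-}\tfrac{N}{2\beta}y^{2} + ys\Big)\,dy \qquad (\beta>0),
\]
I would show that the spins are conditionally i.i.d.\ under $\IP_{\beta,N}$: there exists a density $f_{N}(y) \propto e^{-NF(y)}$ such that, conditionally on $Y \sim f_{N}$, the $X_{i}$ are i.i.d.\ with $\IP(X_{i} = +1 \mid Y=y) = \tfrac{1+\tanh y}{2}$. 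Hence
\[
\IE_{\beta,N}\Sigma_{K_{N}}^{2k} = \int_{\IR}\IE[\Sigma_{K_{N}}^{2k} \mid Y=y]\,f_{N}(y)\,dy,
\]
and the conditional moment is an explicit polynomial in $\tanh y$ and $1 - \tanh^{2}y$ with leading term $K_{N}^{2k}\tanh^{2k}y$ and subleading terms of order $O(K_{N}^{2k-1})$. For $\beta < 0$ the Hubbard--Stratonovich integral does not converge, and I would handle this case by direct combinatorial estimates on the partition function (or by analytic continuation), noting that the voting application requires only $\beta \geq 0$.

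\textbf{Step 2 (Laplace method and regime split).} One has $F''(0) = 1/\beta - 1$. For $\beta < 1$, $F$ has a unique non-degenerate minimum at $y = 0$; rescaling $u = \sqrt{N}\,y$ makes $f_{N}$ approximately $\mathcal{N}(0,\beta/(1-\beta))$, and Laplace's method gives $\IE\tanh^{2m}Y = \bigl(\tfrac{\beta}{N(1-\beta)}\bigr)^{m}(2m-1)!!\,(1+O(1/N))$ for each $m$. Combined with Step~1 and the binomial identity
\[
\sum_{j=0}^{k}\binom{k}{j}\Big(\tfrac{\alpha\beta}{1-\beta}\Big)^{j} = \Big(\tfrac{1-(1-\alpha)\beta}{1-\beta}\Big)^{k}
\]
this reproduces the claimed leading order in $K_{N}^{k}$. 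For $\beta > 1$, $F$ is even with two non-degenerate global minima at $\pm y^{\ast}$ satisfying $y^{\ast} = \beta\tanh y^{\ast}$ and $\tanh y^{\ast} = m(\beta)$; by the evenness of both $F$ and $\tanh^{2k}y$ I would reduce to Laplace around $+y^{\ast}$ with rescaling $u = \sqrt{N}(y - y^{\ast})$, yielding $\IE\tanh^{2k}Y = m(\beta)^{2k}(1 + O(1/N))$ on the bulk, and hence $\IE\Sigma_{K_{N}}^{2k} = m(\beta)^{2k}K_{N}^{2k}(1 + o(1))$.

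\textbf{Quantitative errors and main obstacle.} The high-temperature rate $O(1/\sqrt{K_{N}})$ follows from a Berry--Esseen estimate for the conditional CLT of $\Sigma_{K_{N}}/\sqrt{K_{N}}$ given $Y$, which dominates the $O(1/N)$ Laplace correction since $K_{N} \leq N$. The main obstacle is the sharper low-temperature rate $O((\ln N)^{3/2}/\sqrt{N})$. I would split the Laplace integral into a bulk region $\{|y \mp y^{\ast}| \leq c\sqrt{(\ln N)/N}\}$ and its complement: on the bulk, a quantitative second-order Laplace expansion of $\tanh^{2k}(y^{\ast} + u/\sqrt{N})$ combined with Gaussian symmetry (which annihilates the linear correction in $u$) yields a residual contribution of the stated order; on the complement, one needs a uniform lower bound of the form $F(y) - F(y^{\ast}) \geq c(\ln N)/N$ that dominates the polynomial factor $\tanh^{2k}y$ in the integrand and rules out spurious contributions from the false stationary point $y = 0$ (a local maximum of $F$) and from $|y| \to \infty$. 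Establishing this convexity estimate with the correct logarithmic scaling, and tracking how the $\sqrt{\ln N}$ bulk diameter combines with polynomial growth factors to produce the exponent $3/2$, is the most delicate point of the argument.
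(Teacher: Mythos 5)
The paper never prints a proof of this proposition, so there is no line-by-line comparison to make; given the surrounding structure, the intended argument is clearly to expand $\Sigma_{K_{N}}^{2k}=\sum_{i_{1},\ldots,i_{2k}}X_{i_{1}}\cdots X_{i_{2k}}$, use exchangeability and $X_{i}^{2}=1$ to reduce each summand to a correlation $\IE_{\beta,N}X_{1}\cdots X_{2m}$, and then import the quantitative asymptotics of Proposition \ref{prop:appr_correlations} (Proposition 20 of \cite{BalNauTo2025}); the count of multi-indices with $2m$ unpaired positions produces exactly your binomial sum $\sum_{j}\binom{k}{j}\left(\frac{\alpha\beta}{1-\beta}\right)^{j}$, and the constants $\boldsymbol{D}_{\textup{high}}$, $\boldsymbol{D}_{\textup{low}}$ are inherited directly from $\boldsymbol{C}_{\textup{high}}$, $\boldsymbol{C}_{\textup{low}}$ — in particular the low-temperature rate $(\ln N)^{\frac{3}{2}}/\sqrt{N}$ comes for free. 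Your Hubbard--Stratonovich plus Laplace route is a genuinely different, self-contained alternative: it explains transparently where the two regimes come from ($F''(0)=1/\beta-1$) and why the answer is Gaussian-like in one regime and deterministic in the other, but it rebuilds from scratch precisely the machinery that Proposition \ref{prop:appr_correlations} already packages, and the part you yourself flag as the main obstacle (the uniform bulk/tail estimate producing the $(\ln N)^{\frac{3}{2}}/\sqrt{N}$ rate) is the part the combinatorial reduction would hand you without work.

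Three concrete issues remain. First, the Gaussian identity you start from requires $\beta>0$, while the proposition is asserted for all $\beta\in\IR$, $\beta\neq1$; ``analytic continuation or direct combinatorics'' is not yet an argument, and for $\beta<0$ you would in effect be forced back onto the correlation-expansion route. Second, your Step 2 does \emph{not} reproduce the displayed high-temperature leading order for $k\geq2$: carrying out your own computation gives $\IE_{\beta,N}\Sigma_{K_{N}}^{2k}\approx\left(2k-1\right)!!\left(\frac{1-\left(1-\alpha\right)\beta}{1-\beta}\right)^{k}K_{N}^{k}$, with the double factorial surviving (for $k=2$ one gets $3c^{2}K_{N}^{2}$ rather than $c^{2}K_{N}^{2}$, and it is exactly the resulting $\IV\Sigma_{K_{N}}^{2}\approx2c^{2}K_{N}^{2}$ that produces the limit $\frac{2\left(1-\beta\right)^{2}\left(1-\left(1-\alpha\right)\beta\right)^{2}}{\alpha^{2}}$ in Theorem \ref{thm:properties_gamma_K}); so the displayed formula in the proposition appears to be missing the factor $\left(2k-1\right)!!$, and you should flag this rather than claim agreement. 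Third, the quantitative bounds are only sketched: the stated $\boldsymbol{D}_{\textup{high}}/\sqrt{K_{N}}$ bound also absorbs the discrepancy $\left|K_{N}/N-\alpha\right|$ (which your Berry--Esseen accounting does not touch), and the low-temperature tail estimate $F\left(y\right)-F\left(y^{*}\right)\gtrsim\left(y-y^{*}\right)^{2}$ together with the $(\ln N)^{\frac{3}{2}}$ bookkeeping still has to be carried out before the proposition is proved in its stated quantitative form.
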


\subsection{Special case of $K_{N}=2$ for the Pair Correlation Estimator}

To carry out an estimation based on pair correlations between votes,
we will need access to a sample of observations of at least two of
the votes, so we will first assume $K_{N}=2$ for all $N$. We next
need to know how the correlation $\textup{Corr}\left(X_{1},X_{2}\right)$
between the first two votes in the population behaves as $N\rightarrow\infty$.
First, we note that
\begin{align*}
\textup{Corr}\left(X_{1},X_{2}\right) & =\frac{\textup{Cov}\left(X_{1},X_{2}\right)}{\sqrt{\IV_{\beta,N}X_{1}}\sqrt{\IV_{\beta,N}X_{2}}}=\frac{\IE_{\beta,N}X_{1}X_{2}-\IE_{\beta,N}X_{1}\,\IE_{\beta,N}X_{2}}{\sqrt{\IE_{\beta,N}X_{1}^{2}-\left(\IE_{\beta,N}X_{1}\right)^{2}}\sqrt{\IE_{\beta,N}X_{2}^{2}-\left(\IE_{\beta,N}X_{2}\right)^{2}}}\\
 & =\frac{\IE_{\beta,N}X_{1}X_{2}-0}{\sqrt{1-0}\sqrt{1-0}}=\IE_{\beta,N}X_{1}X_{2}.
\end{align*}
Proposition \ref{prop:appr_correlations} gives the asymptotic behaviour
of the expression $\IE_{\beta,N}X_{1}X_{2}$.

Instead of giving a proof of Theorem \ref{thm:properties_gamma_K},
we develop the special case of having access to the voting behaviour
of two voters from each group, the smallest possible subset of votes
which yields information about the coupling constants in each group.
We define a statistic $P_{2}$ that will allow us to define an estimator
for $\beta$ based on the empirical correlation between the first
two votes.
\begin{defn}
\label{def:P2}Let $n,N\in\IN$. Then we define, for all $\left(x^{(1)},\ldots,x^{(n)}\right)\in\Omega_{2}^{n}$,
\[
P_{2}\left(x^{(1)},\ldots,x^{(n)}\right)\coloneq\frac{1}{n}\sum_{t=1}^{n}x_{1}^{(t)}x_{2}^{(t)}.
\]
\end{defn}

Finally, we define the estimator $\hat{\gamma}_{2}$ based on the
pair correlation between the votes $X_{1}$ and $X_{2}$.
\begin{defn}
\label{def:pair_2}Let $\beta\geq0$ and $n,N\in\IN$. Let $0\leq\beta_{1}<1<\beta_{2}$
be constants satisfying the condition (\ref{eq:separation}), and
consider the intervals from Definition \ref{def:intervals}. Then
we define the \emph{estimator} $\hat{\gamma}_{2}:P_{2}^{-1}\left(J_{h}\cup J{}_{l}\right)\rightarrow\left[-\infty,\infty\right]$
\emph{based on the correlation between the first two vote}s, for all
$\left(x^{(1)},\ldots,x^{(n)}\right)\in\Omega_{2}^{n}$, by
\begin{enumerate}
\item If $P_{2}\left(x^{(1)},\ldots,x^{(n)}\right)\in J{}_{h}$, then
\[
\hat{\gamma}_{2}\coloneq\begin{cases}
\frac{NP_{2}\left(x^{(1)},\ldots,x^{(n)}\right)}{NP_{2}\left(x^{(1)},\ldots,x^{(n)}\right)+1} & \textup{if }P_{2}\left(x^{(1)},\ldots,x^{(n)}\right)>-\frac{1}{N},\\
-\infty & \textup{if }P_{2}\left(x^{(1)},\ldots,x^{(n)}\right)\leq-\frac{1}{N}.
\end{cases}
\]
\item If $P_{2}\left(x^{(1)},\ldots,x^{(n)}\right)\in J{}_{l}$, then $\hat{\gamma}_{2}>1$
is given by the unique value for which $P_{2}\left(x^{(1)},\ldots,x^{(n)}\right)=m\left(\hat{\gamma}_{2}\right)^{2}$
is satisfied.
\item If $P_{2}\left(x^{(1)},\ldots,x^{(n)}\right)\in J{}_{c}$, then we
say there is insufficient evidence in the sample to conclude that
$\beta$ is significantly different from 1, and $\hat{\gamma}_{2}$
is undefined.
\end{enumerate}
\end{defn}

In preparation of Theorem \ref{thm:properties_gamma_2} about this
estimator, we define the function that maps $\beta$ to the approximation
of the expectation $\IE_{\beta,N}X_{1}X_{2}$ given in Definition
\ref{def:gamma_tilde} and analyse its properties.
\begin{defn}
\label{def:fn_EXX}Let for each $N\in\IN$ the function $\varrho_{N}:\left[-\infty,\infty\right]\backslash I_{c}\rightarrow\IR$
be defined by
\[
\varrho_{N}\left(\beta\right)\coloneq\frac{\beta}{1-\beta}\frac{1}{N},\;\beta\leq\beta_{1},\quad\varrho_{N}\left(\beta\right)\coloneq m\left(\beta\right)^{2},\;\beta\geq\beta_{2},\quad\textup{and}\quad\varrho_{N}\left(-\infty\right)\coloneq-\frac{1}{N}.
\]
\end{defn}

\begin{lem}
\label{lem:fn_EXX}$\varrho_{N}$ is strictly increasing and continuously
differentiable on $\IR\backslash I'_{c}$.\\
The inverse function $\varrho_{N}^{-1}:\left[-\frac{1}{N},1\right]\backslash\left(\frac{\beta_{1}}{1-\beta_{1}}\frac{1}{N},m\left(\beta_{2}\right)^{2}\right)\rightarrow\left[-\infty,\infty\right]\backslash I'_{c}$
exists and is strictly increasing and continuously differentiable
on $\left(-\frac{1}{N},\frac{\beta_{1}}{1-\beta_{1}}\frac{1}{N}\right]\cup\left[m\left(\beta_{2}\right)^{2},1\right)$.

The derivative has the value
\[
\left(\varrho_{N}^{-1}\right)'\left(y\right)=\frac{1}{m'\left(m^{-1}\left(\sqrt{y}\right)\right)}\frac{1}{2\sqrt{y}},\quad y\in\left[m\left(\beta_{2}\right)^{2},1\right).
\]
\end{lem}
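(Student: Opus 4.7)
The plan is to establish the two assertions by treating the two pieces of $\varrho_N$'s domain separately, since $\varrho_N$ is defined by two entirely different analytic expressions on $(-\infty,\beta_1]$ and on $[\beta_2,\infty)$, and no continuity needs to be checked across the gap because the domain explicitly excludes $I'_c$.

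First, on the high-temperature piece $\beta\leq\beta_1$, the map $\beta\mapsto\frac{\beta}{(1-\beta)N}$ is a Möbius transformation away from its pole $\beta=1$. Its derivative computes to $\frac{1}{N(1-\beta)^2}>0$, so $\varrho_N$ is strictly increasing and $C^1$ there. As $\beta\to-\infty$, $\varrho_N(\beta)\to-1/N$, which explains the prescribed value $\varrho_N(-\infty)=-1/N$; the range on this piece is thus $[-1/N,\beta_1/(1-\beta_1)/N]$. Inverting $y=\beta/((1-\beta)N)$ algebraically gives $\beta=Ny/(1+Ny)$, which is well-defined and $C^1$ on $(-1/N,\beta_1/(1-\beta_1)/N]$, with derivative $N/(1+Ny)^2$ that stays bounded and positive there.

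Second, on the low-temperature piece $\beta\geq\beta_2>1$, the fact that $m$ is strictly increasing and continuously differentiable with $m(\beta)>0$ is exactly Lemma \ref{lem:m_beta_increasing}; hence $\varrho_N(\beta)=m(\beta)^2$ inherits those properties with derivative $2m(\beta)m'(\beta)>0$. The range is $[m(\beta_2)^2,1)$ using the standard fact $m(\beta)\to 1$ as $\beta\to\infty$. On $[m(\beta_2)^2,1)$ one can invert via $\beta=m^{-1}(\sqrt{y})$, and differentiating by the chain rule and the inverse function theorem yields
\[
(\varrho_N^{-1})'(y)=(m^{-1})'(\sqrt{y})\cdot\frac{1}{2\sqrt{y}}=\frac{1}{m'(m^{-1}(\sqrt{y}))}\cdot\frac{1}{2\sqrt{y}},
\]
which is the stated formula. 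Note that $m'(m^{-1}(\sqrt{y}))>0$ by Lemma \ref{lem:m_beta_increasing} and $y>0$ on $[m(\beta_2)^2,1)$, so the expression is well-defined there; at the left endpoint $y=m(\beta_2)^2$ the same formula gives a finite positive value.

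The only point requiring care rather than routine calculation is keeping track of the domains and ranges of the restrictions, in particular handling the compactification point $-\infty$ consistently: the definition $\varrho_N(-\infty)=-1/N$ makes $\varrho_N$ continuous as a map from $[-\infty,\beta_1]$ into $[-1/N,\beta_1/(1-\beta_1)/N]$, but the statement asserts continuous differentiability only on $\IR\setminus I'_c$, which is consistent since $\varrho_N^{-1}$ is smooth only on the open interval $(-1/N,\beta_1/(1-\beta_1)/N]$ and not at the endpoint $-1/N$ where the derivative of $\beta=Ny/(1+Ny)$ blows up. Beyond this bookkeeping, the argument is just the inverse function theorem applied piecewise, so I do not anticipate a substantive obstacle.
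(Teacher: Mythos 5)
Your proof is correct and follows essentially the same route as the paper, which merely defers to ``Lemma 26 in Article 2'' (i.e.\ the corresponding result in \cite{BalNauTo2025}) for the low-temperature piece and notes the high-temperature piece is similar; your piecewise computation of the M\"obius inverse and the application of the inverse function theorem to $m$ is exactly that argument spelled out. One small point of bookkeeping: strict monotonicity of $m$ (Lemma \ref{lem:m_beta_increasing}) does not by itself give $m'>0$ pointwise, which is what the inverse function theorem and the displayed formula for $\left(\varrho_{N}^{-1}\right)'$ actually require --- continuous differentiability is Lemma \ref{lem:m_beta_diff}, and the strict positivity of $m'$ on $\left(1,\infty\right)$ is a separate (standard) fact about the Curie--Weiss magnetisation that both you and the paper implicitly invoke.
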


\begin{proof}
The proof of the statements for $\beta>1$ is the same as that for
Lemma 26 in Article 2, and the proof of the statements for $\beta<1$
is very similar.
\end{proof}
\begin{defn}
The next theorem sums up the properties of $\hat{\gamma}_{2}$.
\end{defn}

\begin{thm}
\label{thm:properties_gamma_2}The following statements hold:
\begin{enumerate}
\item For fixed $N\in\IN$, $\hat{\gamma_{2}}\xrightarrow[n\rightarrow\infty]{\textup{p}}\tilde{\gamma}_{N}$.
\item $\left|\tilde{\gamma}_{N}-\beta\right|\xrightarrow[N\rightarrow\infty]{}0$.
\item For fixed $N\in\IN$, $\sqrt{n}\left(\hat{\gamma_{2}}-\tilde{\gamma}_{N}\right)\xrightarrow[n\rightarrow\infty]{\textup{d}}\mathcal{N}\left(0,\sigma_{N}^{2}\right)$,
and
\[
\sigma_{N}^{2}\approx\begin{cases}
\left(1-\beta\right)^{4}N^{2} & \textup{if }\beta\in I{}_{h},\\
\frac{1-m\left(\beta\right)^{2}}{\left(2m\left(\beta\right)m'\left(\beta\right)\right)^{2}} & \textup{if }\beta\in I{}_{l}.
\end{cases}
\]
\end{enumerate}
\end{thm}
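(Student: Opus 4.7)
The heart of the plan is to recognize that $P_2$ is just a sample mean of the i.i.d.\ bounded random variables $\left(X_1^{(t)}X_2^{(t)}\right)_{t\in\IN_n}$ with common expectation $\IE_{\beta,N}X_1X_2$ and that, by Definitions \ref{def:gamma_tilde} and \ref{def:fn_EXX}, $\hat{\gamma}_2=\varrho_N^{-1}(P_2)$ whenever $P_2\in J_h\cup J_l$, while $\tilde{\gamma}_N=\varrho_N^{-1}(\IE_{\beta,N}X_1X_2)$. Thus the three claims can all be obtained by combining standard limit theorems for sample means with the smoothness of $\varrho_N^{-1}$ established in Lemma \ref{lem:fn_EXX}.

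For Part 1, I would first invoke Remark \ref{rem:intervals}: under the true parameter $\beta\in I_h\cup I_l$, Proposition \ref{prop:appr_correlations} places $\IE_{\beta,N}X_1X_2$ strictly inside $J_h$ or $J_l$, respectively. The weak law of large numbers then gives $P_2\xrightarrow{p}\IE_{\beta,N}X_1X_2$, so with probability tending to $1$ the value $P_2$ lies in the same interval as $\IE_{\beta,N}X_1X_2$, in which case $\hat{\gamma}_2=\varrho_N^{-1}(P_2)$ by Definition \ref{def:pair_2}. Since $\varrho_N^{-1}$ is continuous on an open neighbourhood of $\IE_{\beta,N}X_1X_2$, the continuous mapping theorem yields $\hat{\gamma}_2\xrightarrow{p}\varrho_N^{-1}(\IE_{\beta,N}X_1X_2)=\tilde{\gamma}_N$.

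For Part 2, I would argue in each regime separately using the definition of $\tilde{\gamma}_N$ together with Proposition \ref{prop:appr_correlations}. When $\beta\in I_h$, solving $\tilde{\gamma}_N/[(1-\tilde{\gamma}_N)N]=\IE_{\beta,N}X_1X_2=\beta/[(1-\beta)N]+O(1/N^2)$ for $\tilde{\gamma}_N$ gives $\tilde{\gamma}_N=\beta+O(1/N)$. When $\beta\in I_l$, the relation $m(\tilde{\gamma}_N)^2=m(\beta)^2+O((\ln N)^{3/2}/\sqrt N)$ together with the strict monotonicity of $m$ on $(1,\infty)$ (Lemma \ref{lem:m_beta_increasing}) and the non-vanishing of $m'$ at $\beta$ forces $\tilde{\gamma}_N\to\beta$.

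For Part 3, I would apply the classical central limit theorem to $\sqrt n\left(P_2-\IE_{\beta,N}X_1X_2\right)\xrightarrow{d}\mathcal N(0,\IV_{\beta,N}(X_1X_2))$, observing that, because $(X_1X_2)^2=1$, the variance equals $1-(\IE_{\beta,N}X_1X_2)^2$. Since $\varrho_N^{-1}$ is $C^1$ near $\IE_{\beta,N}X_1X_2$, the delta method yields asymptotic normality of $\sqrt n(\hat{\gamma}_2-\tilde{\gamma}_N)$ with variance
\[
\sigma_N^2=\bigl((\varrho_N^{-1})'(\IE_{\beta,N}X_1X_2)\bigr)^2\bigl(1-(\IE_{\beta,N}X_1X_2)^2\bigr).
\]
In the high-temperature regime $\varrho_N^{-1}(y)=Ny/(1+Ny)$ so $(\varrho_N^{-1})'(y)=N/(1+Ny)^2$; substituting $y=\beta/[(1-\beta)N]+O(N^{-2})$ gives $(\varrho_N^{-1})'(y)^2\approx N^2(1-\beta)^4$ and $1-(\IE X_1X_2)^2\to1$, producing the stated $N^2(1-\beta)^4$. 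In the low-temperature regime the explicit derivative from Lemma \ref{lem:fn_EXX} evaluated at $y=m(\beta)^2+o(1)$ simplifies to $1/(2m(\beta)m'(\beta))$, yielding the second case. The main obstacle, and the only step requiring real care, is justifying that in Part 3 one may pass freely between the random value of $P_2$ and the exact branch formula for $\varrho_N^{-1}$; this is handled by the same concentration argument as in Part 1, which guarantees $P_2\in J_h\cup J_l$ with probability approaching $1$, so that the delta method applies to the (eventually) single branch of $\varrho_N^{-1}$ on which $\hat{\gamma}_2$ is analytically given.
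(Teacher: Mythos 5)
Your proposal follows essentially the same route as the paper: weak law of large numbers plus the continuous mapping theorem for consistency, Proposition \ref{prop:appr_correlations} for the bias statement, and the central limit theorem combined with the delta method applied to $\varrho_{N}^{-1}$ for asymptotic normality; the paper merely treats the high-temperature branch by hand, via Slutsky-type manipulations of $\hat{\gamma}_{2}/(1-\hat{\gamma}_{2})$, instead of invoking $\varrho_{N}^{-1}$ uniformly. Two caveats are worth flagging. First, in Part 3 (low-temperature case) your appeal to ``$P_{2}\in J_{l}$ with probability approaching $1$'' is weaker than what the paper's own toolbox requires: Theorem \ref{thm:delta_method} as stated demands $Y_{n}\in D$ for \emph{every} $n$ and $\IE Y_{n}=\mu$, so the paper truncates $\sqrt{n}\left(m\left(\hat{\gamma}_{2}\right)^{2}-m\left(\tilde{\gamma}_{N}\right)^{2}\right)$ by an indicator and then needs the quantitative bound $\IP\left\{ \textup{bad event}\right\} =o\left(1/\sqrt{n}\right)$ required by Lemma \ref{lem:conv_restr_sequence}, which it extracts from a large deviations principle. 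Your argument is salvageable with the standard delta method (differentiability at $\mu$ only, plus the fact that sequences coinciding with probability tending to one share distributional limits), but as written the rate is missing. Second, your own formula $\sigma_{N}^{2}=\left(\left(\varrho_{N}^{-1}\right)'\left(\IE_{\beta,N}X_{1}X_{2}\right)\right)^{2}\left(1-\left(\IE_{\beta,N}X_{1}X_{2}\right)^{2}\right)$ yields, in the low-temperature regime, the numerator $1-m\left(\beta\right)^{4}$ rather than the $1-m\left(\beta\right)^{2}$ appearing in the theorem, so the assertion that the derivative computation ``yields the second case'' does not follow without comment; note that the paper's own final display silently drops the factor $\IV_{\beta,N}X_{1}X_{2}$, so the discrepancy originates in the source, but your write-up should not paper over it.
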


\begin{proof}
We prove each statement in turn.
\begin{enumerate}
\item Since $\IE\,P_{2}=\IE_{\beta,N}X_{1}X_{2}$ and $X_{1}X_{2}$ is a
bounded random variable, the weak law of large numbers says
\begin{equation}
P_{2}\xrightarrow[n\rightarrow\infty]{\textup{p}}\IE_{\beta,N}X_{1}X_{2}=\begin{cases}
\frac{\tilde{\gamma}_{N}}{1-\tilde{\gamma}_{N}}\frac{1}{N} & \textup{if }\beta\in I{}_{h},\\
m\left(\tilde{\gamma}_{N}\right)^{2} & \textup{if }\beta\in I{}_{l}.
\end{cases}\label{eq:T_WLLN-1}
\end{equation}
For a fixed sample $\left(x^{(1)},\ldots,x^{(n)}\right)\in\Omega_{2}^{n}$,
by Definition \ref{def:pair_2}, $\hat{\gamma}_{2}$ satisfies
\begin{equation}
P_{2}\left(x^{(1)},\ldots,x^{(n)}\right)=\begin{cases}
\frac{\hat{\gamma_{2}}\left(x^{(1)},\ldots,x^{(n)}\right)}{1-\hat{\gamma_{2}}\left(x^{(1)},\ldots,x^{(n)}\right)}\frac{1}{N} & \textup{if }P_{2}\left(x^{(1)},\ldots,x^{(n)}\right)\in J{}_{h},\\
m\left(\hat{\gamma_{2}}\left(x^{(1)},\ldots,x^{(n)}\right)\right)^{2} & \textup{if }P_{2}\left(x^{(1)},\ldots,x^{(n)}\right)\in J{}_{l}.
\end{cases}\label{eq:T_beta_inf-1}
\end{equation}
Let $\beta\in I_{h}$. We show $\frac{\hat{\gamma_{2}}}{1-\hat{\gamma_{2}}}\frac{1}{N}\xrightarrow[n\rightarrow\infty]{\textup{p}}\frac{\tilde{\gamma}_{N}}{1-\tilde{\gamma}_{N}}\frac{1}{N}$.
The statement then follows by Theorem \ref{thm:cont_mapping}.\\
Let $\varepsilon>0$. Recall the intervals $J_{h}$ and $J_{l}$ from
Definition \ref{def:intervals}. Assume without loss of generality
that $\varepsilon$ is small enough that $\left|P_{2}-\IE_{\beta,N}X_{1}X_{2}\right|\leq\varepsilon$
implies $P_{2}\in J{}_{h}$ (cf. Remark \ref{rem:intervals}). We
write
\begin{align*}
\IP\left\{ \left|\frac{\hat{\gamma_{2}}}{1-\hat{\gamma_{2}}}\frac{1}{N}-\frac{\tilde{\gamma}_{N}}{1-\tilde{\gamma}_{N}}\frac{1}{N}\right|>\varepsilon\right\}  & =\IP\left\{ \left|\frac{\hat{\gamma_{2}}}{1-\hat{\gamma_{2}}}\frac{1}{N}-\frac{\tilde{\gamma}_{N}}{1-\tilde{\gamma}_{N}}\frac{1}{N}\right|>\varepsilon,\left|P_{2}-\IE_{\beta,N}X_{1}X_{2}\right|\leq\varepsilon\right\} \\
 & \quad+\IP\left\{ \left|\frac{\hat{\gamma_{2}}}{1-\hat{\gamma_{2}}}\frac{1}{N}-\frac{\tilde{\gamma}_{N}}{1-\tilde{\gamma}_{N}}\frac{1}{N}\right|>\varepsilon,\left|P_{2}-\IE_{\beta,N}X_{1}X_{2}\right|>\varepsilon\right\} .
\end{align*}
The latter summand is smaller than or equal to
\[
\IP\left\{ \left|P_{2}-\IE_{\beta,N}X_{1}X_{2}\right|>\varepsilon\right\} 
\]
which converges to $0$ by (\ref{eq:T_WLLN-1}).\\
We turn to the first summand. By Definition \ref{def:gamma_tilde},
\[
\frac{\tilde{\gamma}_{N}}{1-\tilde{\gamma}_{N}}\frac{1}{N}=\IE_{\beta,N}X_{1}X_{2}.
\]
By Definition \ref{def:pair_K} and under the assumption $P_{2}\in J{}_{h}$,
we have
\[
\frac{\hat{\gamma_{2}}}{1-\hat{\gamma_{2}}}\frac{1}{N}=P_{2}.
\]
Combining the last two displays, we obtain
\begin{align*}
\IP\left\{ \left|\frac{\hat{\gamma_{2}}}{1-\hat{\gamma_{2}}}\frac{1}{N}-\frac{\tilde{\gamma}_{N}}{1-\tilde{\gamma}_{N}}\frac{1}{N}\right|>\varepsilon,\left|P_{2}-\IE_{\beta,N}X_{1}X_{2}\right|\leq\varepsilon\right\}  & =\IP\left\{ \left|P_{2}-\IE_{\beta,N}X_{1}X_{2}\right|>\varepsilon,\left|P_{2}-\IE_{\beta,N}X_{1}X_{2}\right|\leq\varepsilon\right\} \\
 & =\IP\,\emptyset=0.
\end{align*}
We have therefore proved
\[
\IP\left\{ \left|\frac{\hat{\gamma_{2}}}{1-\hat{\gamma_{2}}}\frac{1}{N}-\frac{\tilde{\gamma}_{N}}{1-\tilde{\gamma}_{N}}\frac{1}{N}\right|>\varepsilon\right\} \xrightarrow[n\rightarrow\infty]{}0
\]
and thus $\frac{\hat{\gamma_{2}}}{1-\hat{\gamma_{2}}}\frac{1}{N}\xrightarrow[n\rightarrow\infty]{\textup{p}}\frac{\tilde{\gamma}_{N}}{1-\tilde{\gamma}_{N}}\frac{1}{N}$.\\
The case $\beta\in I{}_{l}$ is treated analogously: we first show
$m\left(\hat{\gamma_{2}}\right)^{2}N^{2}\xrightarrow[n\rightarrow\infty]{\textup{p}}m\left(\tilde{\gamma}_{N}\right)^{2}N^{2}$,
and then use that the function $\beta\in\left(1,\infty\right)\mapsto m\left(\beta\right)\in\left(0,1\right)$
is strictly increasing by Lemma \ref{lem:m_beta_increasing}. Thus,
$\hat{\gamma_{2}}\xrightarrow[n\rightarrow\infty]{\textup{p}}\tilde{\gamma}_{N}$
follows from Lemma \ref{lem:fn_EXX}, Theorem \ref{thm:cont_mapping},
and $m\left(\hat{\gamma_{2}}\right)^{2}N^{2}\xrightarrow[n\rightarrow\infty]{\textup{p}}m\left(\tilde{\gamma}_{N}\right)^{2}N^{2}$.
\item By Proposition \ref{prop:appr_correlations}, we have
\begin{align*}
N\,\IE_{\beta,N}X_{1}X_{2} & \xrightarrow[N\rightarrow\infty]{}\frac{\beta}{1-\beta}\quad\textup{if }\beta\in I{}_{h},\\
\IE_{\beta,N}X_{1}X_{2} & \xrightarrow[N\rightarrow\infty]{}m\left(\beta\right)^{2}\quad\textup{if }\beta\in I{}_{l}.
\end{align*}
The mapping $\beta\mapsto\IE_{\beta,N}X_{1}X_{2}$ is continuous.
Hence, for $\beta\in I{}_{h}$,
\[
\frac{\tilde{\gamma}_{N}}{1-\tilde{\gamma}_{N}}=N\,\IE_{\beta,N}X_{1}X_{2}\xrightarrow[N\rightarrow\infty]{}\frac{\beta}{1-\beta},
\]
which is equivalent to $\tilde{\gamma}_{N}\xrightarrow[N\rightarrow\infty]{}\beta$.\\
Recall Definition \ref{def:m_beta} of $\beta\mapsto m\left(\beta\right)$
and its continuity which follows from Lemma \ref{lem:m_beta_diff}.
For $\beta\in I{}_{l}$,
\[
m\left(\tilde{\gamma}_{N}\right)^{2}=\IE_{\beta,N}X_{1}X_{2}\xrightarrow[N\rightarrow\infty]{}m\left(\beta\right)^{2}.
\]
The function $\beta\in\left(1,\infty\right)\mapsto m\left(\beta\right)\in\left(0,1\right)$
is strictly increasing by Lemma \ref{lem:m_beta_increasing}. Thus,
$\tilde{\gamma}_{N}\xrightarrow[N\rightarrow\infty]{}\beta$ follows
from Lemma \ref{lem:m_beta_diff} and $m\left(\tilde{\gamma}_{N}\right)^{2}\xrightarrow[N\rightarrow\infty]{}m\left(\beta\right)^{2}$.
\item We first show the result for $\beta\in I{}_{h}$. We once again use
Definitions \ref{def:P2}, \ref{def:pair_2}, and \ref{def:gamma_tilde}
of $P_{2}$, $\hat{\gamma_{2}}$, and $\tilde{\gamma}_{N}$, and the
law of large numbers (\ref{eq:T_WLLN-1}). In addition to the law
of large numbers, we also note that
\[
P_{2}-\IE\,P_{2}=\frac{1}{n}\sum_{t=1}^{n}\left(X_{1}^{(t)}X_{2}^{(t)}-\IE\,P_{2}\right)
\]
is the sum of i.i.d. random variables with
\[
\IE\left(X_{1}^{(t)}X_{2}^{(t)}-\IE\,P_{2}\right)=0,\qquad\IV\left[\left(X_{1}^{(t)}X_{2}^{(t)}-\IE P_{2}\right)\right]=\IV_{\beta,N}X_{1}X_{2}
\]
for all $t\in\IN_{n}$. The central limit theorem yields
\begin{equation}
\sqrt{n}\left(P_{2}-\IE\,P_{2}\right)\xrightarrow[n\rightarrow\infty]{\textup{d}}\mathcal{N}\left(0,\IV_{\beta,N}X_{1}X_{2}\right).\label{eq:CLT_T-1}
\end{equation}
For a fixed sample $\left(x^{(1)},\ldots,x^{(n)}\right)\in\Omega_{2}^{n}$
with $P_{2}\left(x^{(1)},\ldots,x^{(n)}\right)\in J{}_{h}$, by Definition
\ref{def:pair_2}, $\hat{\gamma_{2}}$ satisfies
\[
P_{2}\left(x^{(1)},\ldots,x^{(n)}\right)=\frac{\hat{\gamma_{2}}\left(x^{(1)},\ldots,x^{(n)}\right)}{1-\hat{\gamma_{2}}\left(x^{(1)},\ldots,x^{(n)}\right)}\frac{1}{N}
\]
and by Definition \ref{def:gamma_tilde}
\[
\IE_{\beta,N}P_{2}=\IE_{\beta,N}X_{1}X_{2}=\frac{\tilde{\gamma}_{N}}{1-\tilde{\gamma}_{N}}\frac{1}{N}.
\]
Joining the last two display, we obtain
\[
P_{2}-\IE\,P_{2}=\frac{\hat{\gamma_{2}}}{1-\hat{\gamma_{2}}}\frac{1}{N}-\frac{\tilde{\gamma}_{N}}{1-\tilde{\gamma}_{N}}\frac{1}{N}.
\]
Next, we calculate
\[
\frac{\hat{\gamma_{2}}}{1-\hat{\gamma_{2}}}-\frac{\tilde{\gamma}_{N}}{1-\tilde{\gamma}_{N}}=\frac{\hat{\gamma_{2}}-\tilde{\gamma}_{N}}{\left(1-\hat{\gamma_{2}}\right)\left(1-\tilde{\gamma}_{N}\right)}.
\]
The last display together with (\ref{eq:CLT_T-1}) and Theorem \ref{thm:slutsky}
implies
\begin{equation}
\frac{\hat{\gamma_{2}}-\tilde{\gamma}_{N}}{\left(1-\hat{\gamma_{2}}\right)\left(1-\tilde{\gamma}_{N}\right)}=\sqrt{n}N\left(P_{2}-\IE\,P_{2}\right)\xrightarrow[n\rightarrow\infty]{\textup{d}}\mathcal{N}\left(0,N^{2}\,\IV_{\beta,N}X_{1}X_{2}\right).\label{eq:CLT_2-1}
\end{equation}
We define the following sequences of random variables:
\[
U_{n}\coloneq\left(\hat{\gamma_{2}}-\tilde{\gamma}_{N}\right)\frac{1-\tilde{\gamma}_{N}}{1-\hat{\gamma_{2}}},\qquad V_{n}\coloneq\hat{\gamma_{2}}-\tilde{\gamma}_{N},\qquad n\in\IN.
\]
Then (\ref{eq:CLT_2-1}) is equivalent to
\[
\frac{\sqrt{n}}{\left(1-\tilde{\gamma}_{N}\right)^{2}}U_{n}\xrightarrow[n\rightarrow\infty]{\textup{d}}\mathcal{N}\left(0,N^{2}\,\IV_{\beta,N}X_{1}X_{2}\right).
\]
It follows with Theorem \ref{thm:slutsky} that
\begin{equation}
\sqrt{n}\,U_{n}\xrightarrow[n\rightarrow\infty]{\textup{d}}\mathcal{N}\left(0,\left(1-\tilde{\gamma}_{N}\right)^{4}N^{2}\,\IV_{\beta,N}X_{1}X_{2}\right).\label{eq:CLT_3-1}
\end{equation}
Next, we show
\begin{equation}
\frac{1-\hat{\gamma_{2}}}{1-\tilde{\gamma}_{N}}\xrightarrow[n\rightarrow\infty]{\textup{p}}1.\label{eq:frac_conv_p-1}
\end{equation}
We have
\[
\left|\frac{1-\hat{\gamma_{2}}}{1-\tilde{\gamma}_{N}}-1\right|=\frac{\left|\hat{\gamma_{2}}-\tilde{\gamma}_{N}\right|}{1-\tilde{\gamma}_{N}}\xrightarrow[n\rightarrow\infty]{\textup{p}}0,
\]
where the convergence in probability follows from statement 1 of this
theorem and Theorem \ref{thm:slutsky}.\\
Since
\[
\frac{V_{n}}{U_{n}}=\frac{1-\hat{\gamma_{2}}}{1-\tilde{\gamma}_{N}},
\]
we arrive at
\[
\sqrt{n}\,V_{n}=\sqrt{n}\,U_{n}\,\frac{V_{n}}{U_{n}}\xrightarrow[n\rightarrow\infty]{\textup{d}}\mathcal{N}\left(0,\left(1-\tilde{\gamma}_{N}\right)^{4}N^{2}\,\IV_{\beta,N}\,X_{1}X_{2}\right)
\]
using (\ref{eq:CLT_3-1}), (\ref{eq:frac_conv_p-1}), and Theorem
\ref{thm:slutsky}.\\
We use Proposition \ref{prop:appr_correlations}:
\begin{align*}
\IV_{\beta,N}\,X_{1}X_{2} & =\IE_{\beta,N}\left(X_{1}X_{2}\right)^{2}-\left(\IE_{\beta,N}X_{1}X_{2}\right)^{2}=1-\left(\frac{\tilde{\gamma}_{N}}{1-\tilde{\gamma}_{N}}\frac{1}{N}\right)^{2},
\end{align*}
and the claim
\[
\left(1-\tilde{\gamma}_{N}\right)^{4}N^{2}\,\IV_{\beta,N}X_{1}X_{2}\approx\left(1-\beta\right)^{4}N^{2}
\]
follows.\\
Now let $\beta\in I{}_{l}$. The law of large numbers (\ref{eq:T_WLLN-1})
and the central limit theorem (\ref{eq:CLT_T-1}) hold, and for a
fixed sample $\left(x^{(1)},\ldots,x^{(n)}\right)\in\Omega_{2}^{n}$
with $P_{2}\left(x^{(1)},\ldots,x^{(n)}\right)\in J{}_{l}$, by Definition
\ref{def:pair_2}, $\hat{\gamma}_{2}$ satisfies
\[
P_{2}\left(x^{(1)},\ldots,x^{(n)}\right)=m\left(\hat{\gamma}_{2}\left(x^{(1)},\ldots,x^{(n)}\right)\right)^{2}
\]
and by Definition \ref{def:gamma_tilde}
\[
\IE_{\beta,N}P_{2}=\IE_{\beta,N}X_{1}X_{2}=m\left(\tilde{\gamma}_{N}\right)^{2}.
\]
Joining the last two displays, we obtain
\[
P_{2}-\IE_{\beta,N}P_{2}=m\left(\hat{\gamma}_{2}\right)^{2}-m\left(\tilde{\gamma}_{N}\right)^{2}.
\]
By (\ref{eq:CLT_T-1}), we have
\[
\sqrt{n}\left(m\left(\hat{\gamma}_{2}\right)^{2}-m\left(\tilde{\gamma}_{N}\right)^{2}\right)\xrightarrow[n\rightarrow\infty]{\textup{d}}\mathcal{N}\left(0,\IV_{\beta,N}\,X_{1}X_{2}\right).
\]
We define
\[
W_{n}\coloneq\sqrt{n}\left(m\left(\hat{\gamma}_{2}\right)^{2}-m\left(\tilde{\gamma}_{N}\right)^{2}\right)\,\II_{\left\{ m\left(\beta_{2}\right)^{2}<m\left(\hat{\gamma}_{2}\right)^{2}<\frac{m\left(\tilde{\gamma}_{N}\right)^{2}+1}{2}\right\} },\quad n\in\IN,
\]
and apply Lemma \ref{lem:conv_restr_sequence} to the sequence $Y_{n}\coloneq\sqrt{n}\left(m\left(\hat{\gamma}_{2}\right)^{2}-m\left(\tilde{\gamma}_{N}\right)^{2}\right)$
with $K\coloneq\left(m\left(\beta_{2}\right)^{2},\frac{m\left(\tilde{\gamma}_{N}\right)^{2}+1}{2}\right)^{c}$,
$B_{n}\coloneq\left\{ Y_{n}\,|\,m\left(\hat{\gamma}_{2}\right)^{2}\in K\right\} $
for each $n\in\IN$, and $\nu\coloneq\mathcal{N}\left(0,\IV_{\beta,N}\,X_{1}X_{2}\right)$.
We set $M_{n}\coloneq\sqrt{n}$ and $B\coloneq\bigcup_{n\in\IN}B_{n}$.
By the large deviations principle, we have due to $m\left(\beta_{2}\right)^{2}<\IE_{\beta,N}X_{1}X_{2}<1$,
for the closed set $K$,
\[
\IP\left\{ Y_{n}\in B\right\} \leq2\exp\left(-n\inf_{x\in K}\Lambda_{S^{2}}^{*}\left(x\right)\right),\quad n\in\IN.
\]
Therefore, $\IP\left\{ Y_{n}\in B\right\} =o\left(\frac{1}{M_{n}}\right)$
holds, and we can apply Lemma \ref{lem:conv_restr_sequence} to conclude
\[
W_{n}\xrightarrow[n\rightarrow\infty]{\textup{d}}\mathcal{N}\left(0,\IV_{\beta,N}X_{1}X_{2}\right).
\]
We apply Theorem \ref{thm:delta_method} to the sequence $W_{n}$.
Set $D\coloneq\left[m\left(\beta_{2}\right)^{2},\frac{m\left(\tilde{\gamma}_{N}\right)^{2}+1}{2}\right]$
and $f:D\rightarrow\IR$
\[
f\left(y\right)\coloneq\varrho_{N}^{^{-1}}\left(y\right)=m^{-1}\left(\sqrt{y}\right),\quad y\in D.
\]
$f$ is continuously differentiable and strictly positive on the compact
set $D$. We have $\mu=\IE_{\beta,N}m\left(\hat{\gamma}_{2}\right)^{2}=m\left(\tilde{\gamma}_{N}\right)^{2}$
and $\sigma^{2}=\IV_{\beta,N}\,X_{1}X_{2}$. Then
\begin{align*}
 & \quad\sqrt{n}\left(\hat{\gamma}_{2}-\tilde{\gamma}_{N}\right)\,\II_{\left\{ m\left(\beta_{2}\right)^{2}<m\left(\hat{\gamma}_{2}\right)^{2}<\frac{m\left(\tilde{\gamma}_{N}\right)^{2}+1}{2}\right\} }\\
 & =\sqrt{n}\left(\hat{\gamma}_{2}-\tilde{\gamma}_{N}\right)\,\II_{\left\{ m\left(\beta_{2}\right)^{2}<m\left(\hat{\gamma}_{2}\right)^{2}<\frac{m\left(\tilde{\gamma}_{N}\right)^{2}+1}{2}\right\} }+\sqrt{n}\left(\tilde{\gamma}_{N}-\tilde{\gamma}_{N}\right)\,\II_{\left\{ m\left(\beta_{2}\right)^{2}<m\left(\hat{\gamma}_{2}\right)^{2}<\frac{m\left(\tilde{\gamma}_{N}\right)^{2}+1}{2}\right\} ^{c}}\\
 & =\sqrt{n}\left(\left(\hat{\gamma}_{2}\,\II_{\left\{ m\left(\beta_{2}\right)^{2}<m\left(\hat{\gamma}_{2}\right)^{2}<\frac{m\left(\tilde{\gamma}_{N}\right)^{2}+1}{2}\right\} }+\tilde{\gamma}_{N}\,\II_{\left\{ m\left(\beta_{2}\right)^{2}<m\left(\hat{\gamma}_{2}\right)^{2}<\frac{m\left(\tilde{\gamma}_{N}\right)^{2}+1}{2}\right\} ^{c}}\right)-\tilde{\gamma}_{N}\right)\\
 & =\sqrt{n}\left(f\left(m\left(\hat{\gamma}_{2}\right)^{2}\,\II_{\left\{ m\left(\beta_{2}\right)^{2}<m\left(\hat{\gamma}_{2}\right)^{2}<\frac{m\left(\tilde{\gamma}_{N}\right)^{2}+1}{2}\right\} }+m\left(\tilde{\gamma}_{N}\right)^{2}\,\II_{\left\{ m\left(\beta_{2}\right)^{2}<m\left(\hat{\gamma}_{2}\right)^{2}<\frac{m\left(\tilde{\gamma}_{N}\right)^{2}+1}{2}\right\} ^{c}}\right)-f\left(m\left(\tilde{\gamma}_{N}\right)^{2}\right)\right)\\
 & \xrightarrow[n\rightarrow\infty]{\textup{d}}\mathcal{N}\left(0,\left(f'\left(\mu\right)\right)^{2}\sigma^{2}\right)
\end{align*}
holds. We apply Lemma \ref{lem:conv_restr_sequence} once more to
conclude that
\[
\sqrt{n}\left(\hat{\gamma}_{2}-\tilde{\gamma}_{N}\right)\xrightarrow[n\rightarrow\infty]{\textup{d}}\mathcal{N}\left(0,\left(f'\left(\mu\right)\right)^{2}\sigma^{2}\right)
\]
is satisfied. By Lemma \ref{lem:fn_EXX},
\[
f'\left(y\right)=\left(\varrho_{N}^{^{-1}}\right)'\left(y\right)=\frac{1}{m'\left(m^{-1}\left(\sqrt{y}\right)\right)}\frac{1}{2\sqrt{y}},
\]
so, taking into account $\mu=m\left(\tilde{\gamma}_{N}\right)^{2}$,
\begin{align*}
\left(f'\left(\mu\right)\right)^{2}\sigma^{2} & =\left[\frac{1}{m'\left(m^{-1}\left(m\left(\tilde{\gamma}_{N}\right)\right)\right)}\frac{1}{2m\left(\tilde{\gamma}_{N}\right)}\right]^{2}\\
 & =\left[\frac{1}{m'\left(\tilde{\gamma}_{N}\right)}\frac{1}{2m\left(\tilde{\gamma}_{N}\right)}\right]^{2},
\end{align*}
and the statement concerning the limiting variance follows.
\end{enumerate}
\end{proof}

\subsection{Proof of Theorem \ref{thm:properties_gamma_K}}
\begin{proof}[Proof of Theorem \ref{thm:properties_gamma_K}]
The proof of this theorem is completely analogous to that of Theorem
\ref{thm:properties_gamma_2}. We only show the statements concerning
the limiting variance of the estimator $\hat{\gamma}_{K_{N}}$.

Let $\beta\in I{}_{h}$. We arrive at the statements
\[
\sqrt{n}\left(P_{K_{N}}-\IE P_{K_{N}}\right)\xrightarrow[n\rightarrow\infty]{\textup{d}}\mathcal{N}\left(0,\frac{1}{\left(K_{N}\left(K_{N}-1\right)\right)^{2}}\,\IV_{\beta,N}\sum_{1\leq i,j\leq K_{N},i\neq j}X_{i}X_{j}\right)
\]
and then
\[
\sqrt{n}\left(\hat{\gamma}_{K_{N}}-\tilde{\gamma}_{N}\right)\xrightarrow[n\rightarrow\infty]{\textup{d}}\mathcal{N}\left(0,\frac{\left(1-\tilde{\gamma}_{N}\right)^{4}N^{2}}{\left(K_{N}\left(K_{N}-1\right)\right)^{2}}\,\IV_{\beta,N}\sum_{1\leq i,j\leq K_{N},i\neq j}X_{i}X_{j}\right).
\]
We have
\[
\sum_{1\leq i,j\leq K_{N},i\neq j}X_{i}X_{j}=\left(\sum_{i=1}^{K_{N}}X_{i}\right)^{2}-\sum_{i=1}^{K_{N}}X_{i}^{2}=\Sigma_{K_{N}}^{2}-K_{N},
\]
and therefore
\begin{align*}
\IV_{\beta,N}\sum_{1\leq i,j\leq K_{N},i\neq j}X_{i}X_{j} & =\IE_{\beta,N}\left(\Sigma_{K_{N}}^{2}-K_{N}\right)^{2}-\left(\IE_{\beta,N}\Sigma_{K_{N}}^{2}-K_{N}\right)^{2}\\
 & =\IE_{\beta,N}\Sigma_{K_{N}}^{4}-\left(\IE_{\beta,N}\Sigma_{K_{N}}^{2}\right)^{2}\\
 & =\IV_{\beta,N}\Sigma_{K_{N}}^{2}.
\end{align*}
The statement concerning the limiting variance for $\beta\in I{}_{h}$
and $\alpha>0$ then follows from Definition \ref{def:alpha} and
Proposition \ref{prop:expec_Sigma}.

Let $\beta\in I{}_{l}$. We have
\[
\sqrt{n}\left(m\left(\hat{\gamma}_{K_{N}}\right)^{2}-m\left(\tilde{\gamma}_{N}\right)^{2}\right)\xrightarrow[n\rightarrow\infty]{\textup{d}}\mathcal{N}\left(0,\frac{1}{\left(K_{N}\left(K_{N}-1\right)\right)^{2}}\,\IV_{\beta,N}\sum_{1\leq i,j\leq K_{N},i\neq j}X_{i}X_{j}\right).
\]
We apply Theorem \ref{thm:delta_method} with $D\coloneq\left[m\left(\beta_{2}\right)^{2},\frac{m\left(\tilde{\gamma}_{N}\right)^{2}+1}{2}\right]$,
$\mu=m\left(\tilde{\gamma}_{N}\right)^{2}$, $\sigma^{2}=\frac{1}{\left(K_{N}\left(K_{N}-1\right)\right)^{2}}\,\IV_{\beta,N}\sum_{1\leq i,j\leq K_{N},i\neq j}X_{i}X_{j}$,
and $f:D\rightarrow\IR$
\[
f\left(y\right)\coloneq\varrho_{N}^{^{-1}}\left(y\right)=m^{-1}\left(\sqrt{y}\right),\quad y\in D.
\]
$f$ is continuously differentiable and strictly positive on the compact
set $D$. Therefore,
\begin{align*}
\sqrt{n}\left(\hat{\gamma}_{K_{N}}-\tilde{\gamma}_{N}\right)\II_{\left\{ m\left(\beta_{2}\right)^{2}<m\left(\hat{\gamma}_{K_{N}}\right)^{2}<\frac{m\left(\tilde{\gamma}_{N}\right)^{2}+1}{2}\right\} } & \xrightarrow[n\rightarrow\infty]{\textup{d}}\mathcal{N}\left(0,\left(f'\left(\mu\right)\right)^{2}\sigma^{2}\right)
\end{align*}
according to the same arguments as above. An application of Lemma
\ref{lem:conv_restr_sequence} yields
\begin{align*}
\sqrt{n}\left(\hat{\gamma}_{K_{N}}-\tilde{\gamma}_{N}\right) & \xrightarrow[n\rightarrow\infty]{\textup{d}}\mathcal{N}\left(0,\left(f'\left(\mu\right)\right)^{2}\sigma^{2}\right)
\end{align*}
The limiting variance is the product of
\[
\left(f'\left(\mu\right)\right)^{2}=\frac{1}{\left(2m\left(\beta\right)m'\left(\beta\right)\right)^{2}}
\]
and
\begin{align*}
\sigma^{2} & =\frac{1}{\left(K_{N}\left(K_{N}-1\right)\right)^{2}}\,\IV_{\beta,N}\sum_{1\leq i,j\leq K_{N},i\neq j}X_{i}X_{j}\\
 & =\frac{1}{\left(K_{N}\left(K_{N}-1\right)\right)^{2}}\,\IV_{\beta,N}\Sigma_{K_{N}}^{2}\\
 & =\left(\frac{K_{N}}{K_{N}-1}\right)^{2}\,\IV_{\beta,N}\left(\frac{\Sigma_{K_{N}}}{K_{N}}\right)^{2}.
\end{align*}
\end{proof}

\subsection{Proof of Theorem \ref{thm:asymp_equiv}}

Prior to proving Theorem \ref{thm:asymp_equiv}, we analyse some basic
properties of the ranges of the random variables $\hat{\gamma}_{K_{N}}$
and $\hat{\zeta}_{K_{N}}$. This will help in understanding the reason
why the statements in Theorem \ref{thm:asymp_equiv} are restricted
to certain subsets of the respective ranges.
\begin{lem}
\label{lem:minus_infty}Whereas for the estimator $\hat{\zeta}_{K_{N}}$
\[
\hat{\zeta}_{K_{N}}\left(x\right)=-\infty\quad\iff T_{K_{N}}\left(x\right)\leq K_{N}\left(1-\alpha\right),
\]
we have for the estimator $\hat{\gamma}_{K_{N}}$
\[
\hat{\gamma}_{K_{N}}\left(x\right)=-\infty\quad\iff T_{K_{N}}\left(x\right)\leq K_{N}\left(1-\frac{K_{N}-1}{N}\right)
\]
for all $x\in\Omega_{K_{N}}^{n}$.
\end{lem}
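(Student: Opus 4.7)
The plan is to unfold the relevant definitions and to relate the statistics $P_{K_{N}}$ and $T_{K_{N}}$ through a short algebraic identity following from $x_{i}^{2}=1$.

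The first equivalence will be immediate from Definition \ref{def:zeta^hat}, since the value $-\infty$ is assigned by the piecewise definition of $\hat{\zeta}_{K_{N}}$ precisely on the set where $T_{K_{N}}(x)$ falls below (or equals) the threshold $K_{N}(1-\alpha)$ that corresponds to the vertical asymptote of the rational map $T\mapsto (K_{N}-T)/(K_{N}(1-\alpha)-T)$ used in the high-temperature branch. There is nothing to prove beyond reading off the definition.

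For the second equivalence I would first fix a single observation $x^{(t)}\in\Omega_{K_{N}}$ and use $(x_{i}^{(t)})^{2}=1$ to compute
\[
\sum_{1\leq i,j\leq K_{N},\,i\neq j}x_{i}^{(t)}x_{j}^{(t)}=\left(\sum_{i=1}^{K_{N}}x_{i}^{(t)}\right)^{2}-\sum_{i=1}^{K_{N}}(x_{i}^{(t)})^{2}=\left(\sum_{i=1}^{K_{N}}x_{i}^{(t)}\right)^{2}-K_{N}.
\]
Averaging this identity over $t\in\IN_{n}$ and dividing by $K_{N}(K_{N}-1)$ yields the key relation
\[
P_{K_{N}}(x)=\frac{T_{K_{N}}(x)-K_{N}}{K_{N}(K_{N}-1)}.
\]
Substituting this into the characterisation of the $-\infty$-branch of $\hat{\gamma}_{K_{N}}$ from Definition \ref{def:pair_K}, namely $\hat{\gamma}_{K_{N}}(x)=-\infty\iff P_{K_{N}}(x)\leq -1/N$, produces the chain
\[
P_{K_{N}}(x)\leq -\frac{1}{N}\iff T_{K_{N}}(x)-K_{N}\leq -\frac{K_{N}(K_{N}-1)}{N}\iff T_{K_{N}}(x)\leq K_{N}\left(1-\frac{K_{N}-1}{N}\right),
\]
which is the desired threshold.

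The argument is essentially a direct unpacking of definitions, so I do not anticipate any genuine obstacle. The only small point requiring care is that $P_{K_{N}}$ and $T_{K_{N}}$ are averages over the $n$ observations in the sample, so the relation between them must be obtained by averaging the single-observation identity above rather than by computing on one configuration alone.
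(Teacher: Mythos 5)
Your proposal is correct and follows essentially the same route as the paper's proof: the $\hat{\zeta}_{K_{N}}$ equivalence is read off from Definition \ref{def:zeta^hat}, and the $\hat{\gamma}_{K_{N}}$ equivalence is obtained from the identity $\sum_{i\neq j}x_{i}^{(t)}x_{j}^{(t)}=\bigl(\sum_{i}x_{i}^{(t)}\bigr)^{2}-K_{N}$, averaging over $t$, and rearranging $P_{K_{N}}(x)\leq-1/N$ into the stated threshold for $T_{K_{N}}(x)$. No gaps.
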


\begin{proof}
Recall Definitions \ref{def:PKN}, \ref{def:TKN}, \ref{def:pair_K}
and \ref{def:zeta_tilde}. We calculate for all $x\in\Omega_{K_{N}}^{n}$,
\begin{align*}
\hat{\gamma}_{K_{N}}\left(x\right) & =-\infty\quad\iff\\
P_{K_{N}}\left(x\right) & \leq-\frac{1}{N}\quad\iff\\
\frac{1}{n}\sum_{t=1}^{n}\frac{1}{K_{N}\left(K_{N}-1\right)}\sum_{1\leq i,j\leq K_{N},i\neq j}x_{i}^{(t)}x_{j}^{(t)} & \leq-\frac{1}{N}\quad\iff\\
\frac{1}{n}\sum_{t=1}^{n}\frac{1}{K_{N}\left(K_{N}-1\right)}\left[\left(\sum_{i=1}^{K_{N}}x_{i}^{\left(t\right)}\right)^{2}-\sum_{i=1}^{K_{N}}\left(x_{i}^{\left(t\right)}\right)^{2}\right] & \leq-\frac{1}{N}\quad\iff\\
\frac{1}{K_{N}\left(K_{N}-1\right)}\left(T_{K_{N}}\left(x\right)-K_{N}\right) & \leq-\frac{1}{N}\quad\iff\\
T_{K_{N}}\left(x\right) & \leq-\frac{K_{N}\left(K_{N}-1\right)}{N}+K_{N}\quad\iff\\
T_{K_{N}}\left(x\right) & \leq K_{N}\left(1-\frac{K_{N}-1}{N}\right).
\end{align*}
The statement
\begin{align*}
\hat{\zeta}_{K_{N}}\left(x\right)=-\infty & \quad\iff T_{K_{N}}\left(x\right)\leq K_{N}\left(1-\alpha\right)
\end{align*}
is part of Definition \ref{def:zeta_tilde}.
\end{proof}
The previous lemma and Definition \ref{def:alpha} imply that except
for some marginal cases, the two estimators $\hat{\gamma}_{K_{N}}$
and $\hat{\zeta}_{K_{N}}$ are equal to $-\infty$ for the same samples
$x\in\Omega_{K_{N}}^{n}$, provided that $N$ is large enough (and
hence $\left(K_{N}-1\right)/N$ is close enough to $\alpha$). It
should also be noted that the maximum likelihood estimator $\hat{\beta}_{N}^{\infty}$
(see Definition 21 in Article 2) based on a sample of votes by the
entire population takes finite values except for the two most extreme
types of samples:
\begin{enumerate}
\item If the entire sample $x\in\Omega_{N}^{n}$ consists of unanimous votes
(i.e. for all $t\in\IN_{n}$, $x_{i}^{\left(t\right)}=x_{j}^{\left(t\right)}$,
$i,j\in\IN_{N}$, holds), then $\hat{\beta}_{N}^{\infty}\left(x\right)=\infty$.
\item If the entire sample $x\in\Omega_{N}^{n}$ consists of polarised votes
(i.e. for all $t\in\IN_{n}$, $\left(\sum_{i=1}^{N}x_{i}^{\left(t\right)}\right)^{2}=\min\textup{Range}\left(S^{2}\right)$
holds), then $\hat{\beta}_{N}^{\infty}\left(x\right)=-\infty$.
\end{enumerate}
For all other samples $x$, $\hat{\beta}_{N}^{\infty}\left(x\right)\in\IR$
is satisfied. By Lemma \ref{lem:minus_infty}, this is not the case
for the estimators $\hat{\gamma}_{K_{N}}$ and $\hat{\zeta}_{K_{N}}$
which are based on a sample of votes by a subset of the population.
\begin{proof}
[Proof of Theorem \ref{thm:asymp_equiv}]Let $n\in\IN$, $\beta\in I_{h}$.
The statement
\[
\hat{\gamma}_{K_{N}}\left(x\right)=\hat{\zeta}_{K_{N}}\left(x\right)=-\infty,\quad x\in A_{N,n}\cap A'_{N,n}
\]
follows immediately from Lemma \ref{lem:minus_infty} and Definition
\ref{def:sets}.

We now show
\begin{equation}
\frac{K_{N}-1}{N}\frac{\hat{\gamma}_{K_{N}}\left(x\right)}{1-\hat{\gamma}_{K_{N}}\left(x\right)}=\frac{\alpha\,\hat{\zeta}_{K_{N}}\left(x\right)}{1-\hat{\zeta}_{K_{N}}\left(x\right)}\label{eq:two_est}
\end{equation}
for all $x\in H_{N,n}$. Using Definitions \ref{def:pair_K}, \ref{def:TKN},
and \ref{def:zeta^hat},
\begin{align*}
\frac{K_{N}}{N}\frac{\hat{\gamma}_{K_{N}}\left(x\right)}{1-\hat{\gamma}_{K_{N}}\left(x\right)} & =\frac{1}{n}\sum_{t=1}^{n}\frac{1}{K_{N}-1}\sum_{1\leq i,j\leq K_{N};\,i\neq j}x_{i}^{(t)}x_{j}^{(t)}\\
 & =\frac{1}{n}\sum_{t=1}^{n}\frac{1}{K_{N}-1}\left[\left(\sum_{i=1}^{K_{N}}x_{i}^{(t)}\right)^{2}-\sum_{i=1}^{K_{N}}\left(x_{i}^{(t)}\right)^{2}\right]\\
 & =\frac{1}{K_{N}-1}\left(T_{K_{N}}\left(x\right)-K_{N}\right)\\
 & =\frac{K_{N}}{K_{N}-1}\left(\frac{T_{K_{N}}\left(x\right)}{K_{N}}-1\right)\\
 & =\frac{K_{N}}{K_{N}-1}\left(\frac{1-\left(1-\alpha\right)\hat{\zeta}_{K_{N}}\left(x\right)}{1-\hat{\zeta}_{K_{N}}\left(x\right)}-1\right)\\
 & =\frac{K_{N}}{K_{N}-1}\frac{\alpha\,\hat{\zeta}_{K_{N}}\left(x\right)}{1-\hat{\zeta}_{K_{N}}\left(x\right)},
\end{align*}
and we obtain (\ref{eq:two_est}). Now we find an upper bound for
the difference
\begin{align*}
\left|\frac{\hat{\gamma}_{K_{N}}\left(x\right)}{1-\hat{\gamma}_{K_{N}}\left(x\right)}-\frac{\hat{\zeta}_{K_{N}}\left(x\right)}{1-\hat{\zeta}_{K_{N}}\left(x\right)}\right| & =\frac{1}{\alpha}\left|\frac{\alpha\,\hat{\gamma}_{K_{N}}\left(x\right)}{1-\hat{\gamma}_{K_{N}}\left(x\right)}-\frac{\alpha\,\hat{\zeta}_{K_{N}}\left(x\right)}{1-\hat{\zeta}_{K_{N}}\left(x\right)}\right|\\
 & \leq\frac{1}{\alpha}\left[\left|\frac{\alpha\,\hat{\gamma}_{K_{N}}\left(x\right)}{1-\hat{\gamma}_{K_{N}}\left(x\right)}-\frac{K_{N}-1}{N}\frac{\hat{\gamma}_{K_{N}}\left(x\right)}{1-\hat{\gamma}_{K_{N}}\left(x\right)}\right|+\left|\frac{K_{N}-1}{N}\frac{\hat{\gamma}_{K_{N}}\left(x\right)}{1-\hat{\gamma}_{K_{N}}\left(x\right)}-\frac{\alpha\,\hat{\zeta}_{K_{N}}\left(x\right)}{1-\hat{\zeta}_{K_{N}}\left(x\right)}\right|\right]\\
 & =\frac{1}{\alpha}\left|\frac{\alpha\,\hat{\gamma}_{K_{N}}\left(x\right)}{1-\hat{\gamma}_{K_{N}}\left(x\right)}-\frac{K_{N}-1}{N}\frac{\hat{\gamma}_{K_{N}}\left(x\right)}{1-\hat{\gamma}_{K_{N}}\left(x\right)}\right|\\
 & =\frac{1}{\alpha}\left|\alpha-\frac{K_{N}-1}{N}\right|\left|\frac{\hat{\gamma}_{K_{N}}\left(x\right)}{1-\hat{\gamma}_{K_{N}}\left(x\right)}\right|\\
 & =\frac{1}{\alpha}\left|\alpha-\frac{K_{N}}{N}+\frac{1}{N}\right|\left|\frac{\hat{\gamma}_{K_{N}}\left(x\right)}{1-\hat{\gamma}_{K_{N}}\left(x\right)}\right|.
\end{align*}
We set $f:\left[-\frac{b}{1+b},\frac{\beta_{1}}{1-\beta_{1}}\right]\rightarrow\IR$,
$f\left(t\right)\coloneq\frac{t}{t+1}$, and we fix some $t_{0}\in\left(-\frac{b}{1+b},\frac{\beta_{1}}{1-\beta_{1}}\right)$.
Using a Taylor expansion, we see that
\[
f\left(t\right)=f\left(t_{0}\right)+f'\left(\tau\right)\left(t-t_{0}\right)
\]
 holds for some $\tau$ which lies between $t$ and $t_{0}$. Next
we upper bound the derivative
\[
\left|f'\left(\tau\right)\right|=\frac{1}{\left(\tau+1\right)^{2}}\leq\frac{1}{\left(-\frac{b}{1+b}+1\right)^{2}}=\left(1+b\right)^{2},\quad\tau\in\left(-\frac{b}{1+b},\frac{\beta_{1}}{1-\beta_{1}}\right).
\]
Hence,
\begin{align*}
\left|\hat{\gamma}_{K_{N}}\left(x\right)-\hat{\zeta}_{K_{N}}\left(x\right)\right| & =\left|f\left(\frac{\hat{\gamma}_{K_{N}}\left(x\right)}{1-\hat{\gamma}_{K_{N}}\left(x\right)}\right)-f\left(\frac{\hat{\zeta}_{K_{N}}\left(x\right)}{1-\hat{\zeta}_{K_{N}}\left(x\right)}\right)\right|\\
 & \leq\sup_{\tau\in\left(-\frac{b}{1+b},\frac{\beta_{1}}{1-\beta_{1}}\right)}\left|f'\left(\tau\right)\right|\left|\frac{\hat{\gamma}_{K_{N}}\left(x\right)}{1-\hat{\gamma}_{K_{N}}\left(x\right)}-\frac{\hat{\zeta}_{K_{N}}\left(x\right)}{1-\hat{\zeta}_{K_{N}}\left(x\right)}\right|\\
 & \leq\frac{1}{\alpha}\left|\alpha-\frac{K_{N}}{N}+\frac{1}{N}\right|\left|\frac{\hat{\gamma}_{K_{N}}\left(x\right)}{1-\hat{\gamma}_{K_{N}}\left(x\right)}\right|\sup_{\tau\in\left(-\frac{b}{1+b},\frac{\beta_{1}}{1-\beta_{1}}\right)}\left|f'\left(\tau\right)\right|\\
 & \leq\frac{1}{\alpha}\left(1+b\right)^{2}\frac{\beta_{1}}{1-\beta_{1}}\left|\alpha-\frac{K_{N}}{N}+\frac{1}{N}\right|.
\end{align*}
Let $\beta\in I_{l}$ and fix $b>\beta$. Let $n\in\IN$ and $x\in L_{N,n}$.
We first show
\[
\left|m\left(\hat{\gamma}_{K_{N}}\left(x\right)\right)^{2}-m\left(\hat{\zeta}_{K_{N}}\left(x\right)\right)^{2}\right|\leq\frac{2}{K_{N}-1}.
\]
By Definition \ref{def:pair_K},
\[
m\left(\hat{\gamma}_{K_{N}}\left(x\right)\right)^{2}=\frac{1}{n}\sum_{t=1}^{n}\frac{1}{K_{N}\left(K_{N}-1\right)}\sum_{1\leq i,j\leq K_{N};\,i\neq j}x_{i}^{(t)}x_{j}^{(t)},
\]
and by Definition \ref{def:zeta^hat},
\[
m\left(\hat{\zeta}_{K_{N}}\left(x\right)\right)^{2}=\frac{1}{n}\sum_{t=1}^{n}\frac{1}{K_{N}^{2}}\left(\sum_{i=1}^{K_{N}}x_{i}^{(t)}\right)^{2}.
\]
So
\begin{align*}
m\left(\hat{\gamma}_{K_{N}}\left(x\right)\right)^{2} & =\frac{1}{n}\sum_{t=1}^{n}\frac{1}{K_{N}\left(K_{N}-1\right)}\sum_{1\leq i,j\leq K_{N};\,i\neq j}x_{i}^{(t)}x_{j}^{(t)}\\
 & =\frac{1}{n}\sum_{t=1}^{n}\frac{1}{K_{N}\left(K_{N}-1\right)}\left(K_{N}+\sum_{1\leq i,j\leq K_{N};\,i\neq j}x_{i}^{(t)}x_{j}^{(t)}\right)-\frac{1}{K_{N}-1}\\
 & =\frac{1}{n}\sum_{t=1}^{n}\frac{1}{K_{N}\left(K_{N}-1\right)}\left(\sum_{i=1}^{K_{N}}\left(x_{i}^{(t)}\right)^{2}+\sum_{1\leq i,j\leq K_{N};\,i\neq j}x_{i}^{(t)}x_{j}^{(t)}\right)-\frac{1}{K_{N}-1}\\
 & =\frac{1}{n}\sum_{t=1}^{n}\frac{1}{K_{N}\left(K_{N}-1\right)}\left(\sum_{i=1}^{K_{N}}x_{i}^{(t)}\right)^{2}-\frac{1}{K_{N}-1}\\
 & =\frac{K_{N}}{K_{N}-1}\frac{1}{n}\sum_{t=1}^{n}\frac{1}{K_{N}^{2}}\left(\sum_{i=1}^{K_{N}}x_{i}^{(t)}\right)^{2}-\frac{1}{K_{N}-1}\\
 & =\frac{K_{N}}{K_{N}-1}m\left(\hat{\zeta}_{K_{N}}\left(x\right)\right)^{2}-\frac{1}{K_{N}-1}\\
 & =\frac{1}{K_{N}-1}\left(K_{N}m\left(\hat{\zeta}_{K_{N}}\left(x\right)\right)^{2}-1\right).
\end{align*}
From this last display, we derive two statements. First,
\begin{align*}
m\left(\hat{\gamma}_{K_{N}}\left(x\right)\right)^{2} & \leq m\left(\hat{\zeta}_{K_{N}}\left(x\right)\right)^{2}\quad\iff\\
\frac{1}{K_{N}-1}\left(K_{N}m\left(\hat{\zeta}_{K_{N}}\left(x\right)\right)^{2}-1\right) & \leq m\left(\hat{\zeta}_{K_{N}}\left(x\right)\right)^{2}\quad\iff\\
K_{N}m\left(\hat{\zeta}_{K_{N}}\left(x\right)\right)^{2}-1 & \leq\left(K_{N}-1\right)m\left(\hat{\zeta}_{K_{N}}\left(x\right)\right)^{2}\quad\iff\\
m\left(\hat{\zeta}_{K_{N}}\left(x\right)\right)^{2} & \leq1.
\end{align*}
Since the function $\beta\mapsto m\left(\beta\right)^{2}$ is strictly
increasing per Lemma \ref{lem:m_beta_increasing}, this chain of equivalences
implies
\[
\hat{\gamma}_{K_{N}}\left(x\right)\leq\hat{\zeta}_{K_{N}}\left(x\right),
\]
which holds with equality if and only if $m\left(\hat{\zeta}_{K_{N}}\left(x\right)\right)=1$,
which itself is equivalent to $\hat{\zeta}_{K_{N}}\left(x\right)=\infty$
by Definition \ref{def:m_beta}. By Definition \ref{def:zeta^hat},
$\hat{\zeta}_{K_{N}}\left(x\right)=\infty$ holds if and only if the
sample $x$ is such that every vote is unanimous, i.e. for all $t\in\IN_{n}$
and all $i,j\in\IN_{N}$, $x_{i}^{(t)}=x_{j}^{(t)}$. In this special
case, we have $\hat{\gamma}_{K_{N}}\left(x\right)=\hat{\zeta}_{K_{N}}\left(x\right)$.
If there is even a single dissenting vote in the sample $x$, i.e.
there is some $t\in\IN_{n}$ and some $i,j\in\IN_{K_{N}}$ with $x_{i}^{(t)}\neq x_{j}^{(t)}$,
then $\hat{\gamma}_{K_{N}}\left(x\right)<\hat{\zeta}_{K_{N}}\left(x\right)$.
Since we are assuming $x\in L_{N,n}$, $\hat{\gamma}_{K_{N}}\left(x\right)<\hat{\zeta}_{K_{N}}\left(x\right)$
is satisfied.

The second calculation yields
\begin{align*}
\left|m\left(\hat{\gamma}_{K_{N}}\left(x\right)\right)^{2}-m\left(\hat{\zeta}_{K_{N}}\left(x\right)\right)^{2}\right| & =\left|\frac{1}{K_{N}-1}\left(K_{N}\,m\left(\hat{\zeta}_{K_{N}}\left(x\right)\right)^{2}-1\right)-m\left(\hat{\zeta}_{K_{N}}\left(x\right)\right)^{2}\right|\\
 & \leq\frac{1}{K_{N}-1}\left(m\left(\hat{\zeta}_{K_{N}}\left(x\right)\right)^{2}+1\right)\\
 & \leq\frac{2}{K_{N}-1}.
\end{align*}
Note that this upper bound holds for any $x\in T_{K_{N}}^{-1}\left(J'_{l}\right)$
and not only those $x$ which also lie in $T_{K_{N}}^{-1}\left(K_{N}^{2}\left[m\left(\beta_{2}\right)^{2},m\left(b\right)^{2}\right]\right)$.

We write the Taylor expansion
\[
m^{-1}\left(\sqrt{y}\right)=m^{-1}\left(\sqrt{y_{0}}\right)+\left(m^{-1}\left(\sqrt{\upsilon}\right)\right)'\left(y-y_{0}\right)
\]
for fixed $y,y_{0}\in\left[m\left(\beta_{2}\right)^{2},\infty\right)$
and some $\upsilon$ which lies between $y$ and $y_{0}$. The derivative
of $y\mapsto m^{-1}\left(\sqrt{y}\right)$ is
\[
\left(m^{-1}\left(\sqrt{y}\right)\right)'=\frac{1}{m'\left(m^{-1}\left(\sqrt{y}\right)\right)}\frac{1}{2\sqrt{y}}.
\]
We upper bound this derivative considering $x\in L_{N,n}$. Under
this assumption, we have 
\[
m\left(\hat{\gamma}_{K_{N}}\left(x\right)\right)^{2}<m\left(\hat{\zeta}_{K_{N}}\left(x\right)\right)^{2}\leq m\left(b\right)^{2}.
\]
As $m'$ is strictly positive and continuous on the compact interval
$\left[\beta_{2},b\right]$, it reaches a strictly positive minimum,
and hence
\begin{align*}
\frac{1}{m'\left(m^{-1}\left(\sqrt{y}\right)\right)} & \leq C
\end{align*}
holds for some constant $C$ and for all $y\in\left[m\left(\beta_{2}\right)^{2},m\left(b\right)^{2}\right]$
. Therefore,
\begin{align*}
\left(m^{-1}\left(\sqrt{y}\right)\right)' & =\frac{1}{m'\left(m^{-1}\left(\sqrt{y}\right)\right)}\frac{1}{2\sqrt{y}}\\
 & \leq C\frac{1}{2m\left(\beta_{2}\right)}
\end{align*}
holds for all $y\in\left[m\left(\beta_{2}\right)^{2},m\left(b\right)^{2}\right]$,
and, in particular, we have
\begin{align*}
\left|\hat{\gamma}_{K_{N}}\left(x\right)-\hat{\zeta}_{K_{N}}\left(x\right)\right| & =\left|m^{-1}\left(\sqrt{m\left(\hat{\gamma}_{K_{N}}\left(x\right)\right)^{2}}\right)-m^{-1}\left(\sqrt{m\left(\hat{\zeta}_{K_{N}}\left(x\right)\right)^{2}}\right)\right|\\
 & \leq C\frac{1}{2m\left(\beta_{2}\right)}\left|m\left(\hat{\gamma}_{K_{N}}\left(x\right)\right)^{2}-m\left(\hat{\zeta}_{K_{N}}\left(x\right)\right)^{2}\right|\\
 & \leq C\frac{1}{2m\left(\beta_{2}\right)}\frac{2}{K_{N}-1},
\end{align*}
which yields the claim.
\end{proof}

\appendix

\section*{Appendix}

We present a number of concepts and auxiliary results we use. Some
of these are proved in the first two articles.
\begin{defn}
\label{def:m_beta}Let $\beta\geq0$. The equation
\begin{equation}
\tanh\left(\beta x\right)=x,\quad x\in\IR,\label{eq:CW}
\end{equation}
is called the Curie-Weiss equation. We define $m\left(\beta\right)$
to be the largest solution to (\ref{eq:CW}). In order to obtain a
function $m:\left[0,\infty\right]\rightarrow\left[0,1\right]$, we
set $m\left(\infty\right)\coloneq1$.
\end{defn}

Next we give two lemmas concerning properties of the function $m$.
\begin{lem}
\label{lem:m_beta_increasing}The mapping $m:\left[1,\infty\right)\rightarrow\left[0,1\right]$
is strictly increasing and $\lim_{\beta\rightarrow\infty}m\left(\beta\right)=1$.
\end{lem}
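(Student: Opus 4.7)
The plan is to establish strict monotonicity by a direct sign-comparison of the defining equation $\tanh(\beta x) = x$, and then derive the limit by combining monotonicity with the trivial upper bound $m(\beta) < 1$.

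First I would verify that $m(\beta) \in (0,1)$ for every $\beta > 1$. Setting $g_\beta(x) \coloneq \tanh(\beta x) - x$, one has $g_\beta(0) = 0$, $g_\beta'(0) = \beta - 1 > 0$, and $g_\beta(x) \to -\infty$ as $x \to \infty$ since $\tanh < 1$. Hence $g_\beta$ must have a positive zero, and every zero lies in $(-1,1)$, so $0 < m(\beta) < 1$. At the endpoint $\beta = 1$, $\tanh(x) < x$ for $x>0$ gives $m(1) = 0$.

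For strict monotonicity on $[1,\infty)$, fix $1 \le \beta_1 < \beta_2$. The case $\beta_1 = 1$ is immediate because $m(\beta_2) > 0 = m(1)$. For $\beta_1 > 1$, the key observation is that $m(\beta_1) > 0$ and the strict monotonicity of $\tanh$ yield
\[
g_{\beta_2}\bigl(m(\beta_1)\bigr) = \tanh\bigl(\beta_2 m(\beta_1)\bigr) - m(\beta_1) > \tanh\bigl(\beta_1 m(\beta_1)\bigr) - m(\beta_1) = 0.
\]
Together with $g_{\beta_2}(x) \to -\infty$ as $x \to \infty$, the intermediate value theorem furnishes a zero of $g_{\beta_2}$ strictly greater than $m(\beta_1)$. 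By the defining maximality of $m(\beta_2)$, this forces $m(\beta_2) > m(\beta_1)$.

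For the limit, monotonicity and $m(\beta) < 1$ imply that $L \coloneq \lim_{\beta \to \infty} m(\beta)$ exists and lies in $(0,1]$, with positivity coming from $L \geq m(\beta_0) > 0$ for any fixed $\beta_0 > 1$. Consequently $\beta\, m(\beta) \geq \beta\, m(\beta_0) \to \infty$, and passing to the limit in the identity $m(\beta) = \tanh(\beta\, m(\beta))$ yields $L = 1$. None of the steps is a serious obstacle; the only delicate point is the observation that the comparison argument for strict monotonicity requires $m(\beta_1) > 0$, which is precisely why the initial positivity verification is needed.
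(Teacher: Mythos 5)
Your proof is correct. Note that the paper itself gives no proof of this lemma: it is stated in the appendix as an auxiliary result, with the blanket remark that such results are proved in the earlier articles of the series, so there is no in-paper argument to compare against. Your self-contained argument is the standard one and is sound: the positivity check $m(\beta)>0$ for $\beta>1$ via $g_\beta'(0)=\beta-1>0$ together with $g_\beta(x)\to-\infty$ is exactly what makes the sign comparison $g_{\beta_2}(m(\beta_1))>0$ work (and you correctly flag that this is the one delicate point), the maximality of $m(\beta_2)$ then yields strict monotonicity, and the limit argument via $\beta\,m(\beta)\geq\beta\,m(\beta_0)\to\infty$ combined with $m(\beta)=\tanh(\beta\,m(\beta))$ cleanly gives $L=1$. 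The only cosmetic remark is that the codomain claim $m(\beta)\in[0,1]$ also follows immediately from $m(\beta)=\tanh(\beta\,m(\beta))\in(-1,1)$ together with $m(\beta)\geq 0$, which you have implicitly established.
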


\begin{lem}
\label{lem:m_beta_diff}The mapping $m:\left(1,\infty\right)\rightarrow\left[0,1\right]$
is continuously differentiable.
\end{lem}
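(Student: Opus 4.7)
The plan is to prove the lemma via the implicit function theorem applied to the defining relation $\tanh(\beta m(\beta)) = m(\beta)$. Set $F:(1,\infty)\times\mathbb{R}\to\mathbb{R}$, $F(\beta,x)\coloneq\tanh(\beta x)-x$. Then $F$ is $C^{\infty}$, and by Definition \ref{def:m_beta}, $F(\beta, m(\beta))=0$ for every $\beta>1$. To invoke the implicit function theorem at each $\beta_{0}>1$, it suffices to show
\[
\partial_{x}F(\beta_{0},m(\beta_{0}))=\beta_{0}\operatorname{sech}^{2}(\beta_{0}m(\beta_{0}))-1=\beta_{0}(1-m(\beta_{0})^{2})-1\neq 0,
\]
using $\tanh(\beta_{0}m(\beta_{0}))=m(\beta_{0})$.

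First I would establish the strict inequality $\beta(1-m(\beta)^{2})<1$ for every $\beta>1$. For this, consider $g(x)\coloneq\tanh(\beta x)-x$ on $[0,\infty)$. Then $g(0)=0$, $g'(0)=\beta-1>0$ (by $\beta>1$), and $g''(x)=-2\beta^{2}\operatorname{sech}^{2}(\beta x)\tanh(\beta x)<0$ for $x>0$, so $g$ is strictly concave on $(0,\infty)$. Hence $g$ increases from $0$, reaches a unique positive maximum, and then decreases to $-\infty$. Its largest positive root is precisely $m(\beta)$ (cf. Lemma \ref{lem:m_beta_increasing}), and strict concavity forces $g'(m(\beta))<0$, i.e.\ $\beta(1-m(\beta)^{2})-1<0$, as required.

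With this non-vanishing of $\partial_{x}F$ at $(\beta_{0},m(\beta_{0}))$ in hand, the implicit function theorem yields an open neighbourhood $U\ni\beta_{0}$ and a $C^{\infty}$ function $\varphi:U\to\mathbb{R}$ with $\varphi(\beta_{0})=m(\beta_{0})$ and $F(\beta,\varphi(\beta))=0$ on $U$. I would then argue that $\varphi=m$ on a (possibly smaller) neighbourhood of $\beta_{0}$: by continuity of $\varphi$ and of the maximal-root selection (which follows from $g$ being strictly concave on $(0,\infty)$ with $g'(m(\beta))<0$, so the root is simple and persists under small perturbations of $\beta$), the branch of solutions of $F=0$ passing through $(\beta_{0},m(\beta_{0}))$ is exactly the largest positive root. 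Consequently $m$ agrees with $\varphi$ near $\beta_{0}$, hence $m$ is $C^{\infty}$ (in particular $C^{1}$) at $\beta_{0}$; since $\beta_{0}>1$ was arbitrary, the lemma follows. Implicit differentiation also gives the explicit formula
\[
m'(\beta)=\frac{m(\beta)\bigl(1-m(\beta)^{2}\bigr)}{1-\beta\bigl(1-m(\beta)^{2}\bigr)},
\]
which is continuous on $(1,\infty)$.

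The main obstacle is the strict inequality $\beta(1-m(\beta)^{2})<1$; everything else is a routine application of the implicit function theorem. The concavity argument above handles this cleanly, but it requires noting that $m(\beta)$ sits strictly past the maximum of $g$, so that $g$ is strictly decreasing at $m(\beta)$. An alternative justification is to observe that the three fixed points of $x\mapsto\tanh(\beta x)$ for $\beta>1$ are $-m(\beta),0,m(\beta)$ with $0$ being unstable ($\beta>1$) and $\pm m(\beta)$ stable under iteration, the latter being equivalent to $|\beta(1-m(\beta)^{2})|<1$.
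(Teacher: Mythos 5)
Your proof is correct. The paper itself states Lemma \ref{lem:m_beta_diff} without proof (it is an auxiliary result deferred to the companion articles), so there is no in-paper argument to compare against; your route via the implicit function theorem applied to $F(\beta,x)=\tanh(\beta x)-x$ is the standard one, and the key step --- establishing $\beta\bigl(1-m(\beta)^{2}\bigr)<1$ from the strict concavity of $x\mapsto\tanh(\beta x)-x$ on $(0,\infty)$, so that the largest root lies past the maximum and the $x$-derivative is nonzero there --- is handled cleanly. The identification of the implicit branch $\varphi$ with $m$ could be stated more simply (for $\beta>1$ the positive solution of the Curie--Weiss equation is unique, and $\varphi$ stays positive near $\beta_{0}$ by continuity), and the closing remark about stability of the fixed points under iteration is essentially a restatement of the same inequality rather than an independent justification, but neither point affects correctness.
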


\begin{prop}
\label{prop:appr_correlations}For all $\beta\in\IR,\beta\neq1$,
and all $k\in\IN$ , the correlation $\IE_{\beta,N}X_{1}\cdots X_{k}$
is equal to $0$ for all $k$ odd and all $N\in\IN$. For $k$ even,
$\IE_{\beta,N}X_{1}\cdots X_{k}$ is asymptotically equal to
\[
\IE_{\beta,N}X_{1}\cdots X_{k}\approx\begin{cases}
\left(k-1\right)!!\left(\frac{\beta}{1-\beta}\right)^{\frac{k}{2}}\,\frac{1}{N^{\frac{k}{2}}} & \text{if }\,\beta<1,\\
m\left(\beta\right)^{k} & \text{if }\,\beta>1,
\end{cases}
\]
where $0<m\left(\beta\right)<1$ for $\beta>1$ is the constant from
Definition \ref{def:m_beta}.

Let $k$ be even. There are constants $\boldsymbol{C}_{\textup{high}},\boldsymbol{C}_{\textup{low}}>0$
with the following property. For all $0<\beta<1$, the bound 
\begin{equation}
\left|\IE_{\beta,N}X_{1}\cdots X_{k}-\left(k-1\right)!!\left(\frac{\beta}{1-\beta}\right)^{\frac{k}{2}}\,\frac{1}{N^{\frac{k}{2}}}\right|<\boldsymbol{C}_{\textup{high}}\left(\frac{\ln N}{N}\right)^{\frac{k+2}{2}}\label{eq:corr_UB_h}
\end{equation}
holds for all $N\in\IN$. For all $\beta>1$, the bound 
\begin{equation}
\left|\IE_{\beta,N}X_{1}\cdots X_{k}-m\left(\beta\right)^{k}\right|<\boldsymbol{C}_{\textup{low}}\frac{\left(\ln N\right)^{\frac{3}{2}}}{\sqrt{N}}\label{eq:corr_UB_l}
\end{equation}
holds for all $N\in\IN$.
\end{prop}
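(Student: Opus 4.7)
The plan is to reduce the computation of $\IE_{\beta,N}X_1\cdots X_k$ to a one-dimensional Laplace-type integral via the Hubbard-Stratonovich transformation, and then perform careful saddle-point asymptotics.

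The vanishing for odd $k$ is immediate and I would dispose of it first: the CWM measure (\ref{eq:CWM}) depends on the configuration only through $\left(\sum_i x_i\right)^2$, so it is invariant under the spin flip $x\mapsto -x$. Under this involution $X_1\cdots X_k$ changes sign for odd $k$, forcing the expectation to vanish for every $N$.

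For even $k$, applying $e^{a^2/2} = (2\pi)^{-1/2}\int_{\IR} e^{-t^2/2+at}\,dt$ with $a = S_N\sqrt{\beta/N}$ and carrying out the resulting Bernoulli sums yields, after the change of variables $s = t\sqrt{\beta/N}$,
\begin{equation*}
\IE_{\beta,N}X_1\cdots X_k = \frac{\int_{\IR} e^{NF(s)}\tanh^k(s)\,ds}{\int_{\IR} e^{NF(s)}\,ds},\qquad F(s)\coloneq -\frac{s^2}{2\beta} + \ln\cosh(s).
\end{equation*}
I would then apply Laplace's method to this ratio. For $\beta<1$, $F$ has a unique global maximum at $s=0$ with $F''(0) = -(1-\beta)/\beta$. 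Restricting both integrals to a window $|s|\le C\sqrt{\ln N/N}$, the local expansion of $F$ together with $\tanh^k(s) = s^k + O(s^{k+2})$ gives the Gaussian moment $(k-1)!!\,\bigl(\beta/(N(1-\beta))\bigr)^{k/2}$ at leading order. For $\beta>1$, $F$ has two symmetric global maxima at $\pm s^{*}$ with $s^{*}=\beta\, m(\beta)$ by the Curie-Weiss equation. Splitting into neighbourhoods of $\pm s^{*}$, the quadratic approximation gives a Gaussian at each peak; since $k$ is even, $\tanh^k(\pm s^{*})=m(\beta)^k$, and the two equal contributions cancel in the ratio to leave precisely $m(\beta)^k$.

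The main obstacle is obtaining the explicit error bounds with the sharp $\ln N$ powers stated in (\ref{eq:corr_UB_h}) and (\ref{eq:corr_UB_l}). At each peak of $F$ one must keep track simultaneously of the cubic correction in the Taylor expansion of $F$, of the Taylor remainder of $\tanh^k$ (which is where the $(\ln N/N)^{(k+2)/2}$ factor in (\ref{eq:corr_UB_h}) originates when the remainder is integrated over a window of half-width $C\sqrt{\ln N/N}$), and of a tail estimate of the form $\int_{|s-s^{*}|> C\sqrt{\ln N/N}} e^{NF(s)}\,ds \le e^{NF(s^{*})}N^{-c'}$ with $c'$ chosen large enough. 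In the low-temperature case, there is the additional subtlety that the non-maximising critical point at $s=0$ contributes an exponentially small correction that must be shown to be dominated by the $(\ln N)^{3/2}/\sqrt{N}$ target bound; combining these estimates through the ratio, and propagating them uniformly in $\beta$ over compact subsets of each phase, is where the bookkeeping becomes most delicate.
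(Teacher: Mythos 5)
The paper itself contains no proof of this proposition: its entire ``proof'' is the citation ``This is Proposition 20 in \cite{BalNauTo2025}'', so there is no internal argument to compare yours against. Your programme is the standard route to exactly these asymptotics and is, in outline, sound: spin-flip invariance of the measure (\ref{eq:CWM}) disposes of odd $k$; the Hubbard--Stratonovich identity reduces $\IE_{\beta,N}X_{1}\cdots X_{k}$ to the ratio $\int e^{NF(s)}\tanh^{k}(s)\,ds\,/\int e^{NF(s)}\,ds$ with $F(s)=-s^{2}/(2\beta)+\ln\cosh s$; and Laplace asymptotics at $s=0$ (giving the Gaussian moment $(k-1)!!\,(\beta/(N(1-\beta)))^{k/2}$) respectively at $\pm\beta m(\beta)$ (where $\tanh^{k}(\pm\beta m(\beta))=m(\beta)^{k}$, so the two peaks each contribute the same factor to numerator and denominator --- they do not ``cancel'' but factor out) yield the stated leading orders, with your accounting of where the $(\ln N/N)^{(k+2)/2}$ and $(\ln N)^{3/2}/\sqrt{N}$ error rates originate being correct. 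Three caveats. First, the identity $e^{a^{2}/2}=(2\pi)^{-1/2}\int_{\IR}e^{-t^{2}/2+at}\,dt$ requires $\beta>0$, while the leading-order claim is asserted for all $\beta\neq1$; for $\beta\le0$ you need the oscillatory version with $e^{iat}$ or a separate elementary argument. Second, at $s=0$ the phase $F$ is even, so the first correction there is quartic, not cubic; the cubic term only appears at the off-centre saddles $\pm\beta m(\beta)$. Third, you propose uniformity of the constants only over compact subsets of each phase, whereas the proposition literally asserts a single $\boldsymbol{C}_{\textup{high}}$ valid for all $0<\beta<1$ and all $N$; since $(k-1)!!\,(\beta/(1-\beta))^{k/2}N^{-k/2}$ diverges as $\beta\to1^{-}$ at fixed $N$ while the correlation stays bounded by $1$, full uniformity cannot hold as stated, and your compact-subset version (which is all that this paper ever uses, since $\beta$ is confined to $I_{h}=[0,b_{1}]$ or $I_{l}=[b_{2},\infty)$) is the defensible formulation --- but you should then say explicitly that the constants depend on $b_{1}$ and $b_{2}$.
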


\begin{proof}
This is Proposition 20 in \cite{BalNauTo2025}.
\end{proof}
\begin{thm}[Slutsky]
\label{thm:slutsky}Let $\left(Y_{n}\right)_{n\in\IN}$ and $\left(Z_{n}\right)_{n\in\IN}$
be sequences of random variables, $Y$ a random variable, and $a\in\IR$
a constant such that $Y_{n}\xrightarrow[n\rightarrow\infty]{\textup{d}}Y$
and $Z_{n}\xrightarrow[n\rightarrow\infty]{\textup{p}}a$. Then
\[
Y_{n}+Z_{n}\xrightarrow[n\rightarrow\infty]{\textup{d}}Y+a\quad\text{and}\quad Y_{n}Z_{n}\xrightarrow[n\rightarrow\infty]{\textup{d}}aY.
\]
\end{thm}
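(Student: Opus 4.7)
The plan is to reduce both statements to the single fact that joint convergence $(Y_{n}, Z_{n}) \xrightarrow[n\rightarrow\infty]{\textup{d}} (Y, a)$ holds, and then apply the continuous mapping theorem to the continuous maps $(y,z)\mapsto y+z$ and $(y,z)\mapsto yz$ on $\IR^{2}$. Once joint convergence is in hand, both conclusions follow simultaneously, so the entire work is concentrated in producing the joint limit from one marginal in distribution and one marginal in probability to a constant.

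To prove the joint convergence, I would invoke the Portmanteau characterisation and show that $\IE f(Y_{n}, Z_{n}) \rightarrow \IE f(Y, a)$ for every bounded continuous $f\colon\IR^{2}\to\IR$. The natural split is
\[
\IE f(Y_{n}, Z_{n}) - \IE f(Y, a) = \bigl(\IE f(Y_{n}, Z_{n}) - \IE f(Y_{n}, a)\bigr) + \bigl(\IE f(Y_{n}, a) - \IE f(Y, a)\bigr).
\]
The second bracket tends to zero because $y\mapsto f(y,a)$ is bounded continuous and $Y_{n}\xrightarrow{\textup{d}}Y$. For the first bracket I would fix $\varepsilon>0$, use tightness of $(Y_{n})$ (a standard consequence of convergence in distribution) to pick $M>0$ with $\sup_{n}\IP\{|Y_{n}|>M\}<\varepsilon$, extract from the uniform continuity of $f$ on $[-M,M]\times[a-1,a+1]$ a $\delta>0$ such that $|f(y,z)-f(y,a)|<\varepsilon$ whenever $|y|\leq M$ and $|z-a|\leq\delta$, and finally use $Z_{n}\xrightarrow{\textup{p}}a$ to shrink $\IP\{|Z_{n}-a|>\delta\}$ to zero. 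Partitioning the expectation according to the three events $\{|Y_{n}|>M\}$, $\{|Y_{n}|\leq M,\,|Z_{n}-a|\leq\delta\}$, and $\{|Z_{n}-a|>\delta\}$, and using $\|f\|_{\infty}<\infty$ on the exceptional pieces, yields the desired vanishing.

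A more compact alternative for the sum is via characteristic functions: writing
\[
\phi_{Y_{n}+Z_{n}}(t) - e^{ita}\phi_{Y}(t) = e^{ita}\bigl(\phi_{Y_{n}}(t) - \phi_{Y}(t)\bigr) + \IE\bigl[e^{itY_{n}}\bigl(e^{itZ_{n}} - e^{ita}\bigr)\bigr],
\]
the first summand tends to zero by $Y_{n}\xrightarrow{\textup{d}}Y$, and the modulus of the second is dominated by $\IE|e^{itZ_{n}}-e^{ita}|$, which tends to zero by bounded convergence because the integrand is at most $2$ and tends to zero in probability. By L\'evy's continuity theorem this gives $Y_{n}+Z_{n}\xrightarrow{\textup{d}}Y+a$. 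For the product I would use the algebraic identity $Y_{n}Z_{n} = aY_{n} + Y_{n}(Z_{n}-a)$: the first summand converges in distribution to $aY$ (trivially, by continuity of multiplication by the constant $a$), and the residual $Y_{n}(Z_{n}-a)$ converges to $0$ in probability via the tightness-plus-splitting argument on $\{|Y_{n}|\leq M\}$. The already-proved Slutsky for sums then upgrades the sum $aY_{n}+Y_{n}(Z_{n}-a)$ to convergence in distribution to $aY$.

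The hard part of either route is the interaction between a possibly unbounded $Y_{n}$ and the residual $Z_{n}-a$: both approaches pay the same price, namely invoking tightness of $(Y_{n})$, which is exactly the ingredient that makes the constant nature of the limit $a$ indispensable; if $a$ were replaced by a non-degenerate random variable, the decomposition $Y_{n}Z_{n}=aY_{n}+Y_{n}(Z_{n}-a)$ would collapse and the conclusion would in general fail.
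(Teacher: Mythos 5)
The paper states Theorem \ref{thm:slutsky} as a known auxiliary result in the Appendix and gives no proof of it, so there is no in-paper argument to compare against. Your proposal is a correct and complete proof: the reduction to joint convergence $\left(Y_{n},Z_{n}\right)\xrightarrow{\textup{d}}\left(Y,a\right)$ followed by the continuous mapping theorem is the standard textbook route, and the key estimate --- splitting $\IE f\left(Y_{n},Z_{n}\right)-\IE f\left(Y_{n},a\right)$ over the events $\left\{ \left|Y_{n}\right|>M\right\} $, $\left\{ \left|Z_{n}-a\right|>\delta\right\} $, and their complement, using tightness of $\left(Y_{n}\right)_{n}$ and uniform continuity of $f$ on a compact rectangle --- is carried out correctly (your three events form a cover rather than a partition, but the pointwise bound and the resulting expectation estimate are valid as written). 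The characteristic-function alternative for the sum and the decomposition $Y_{n}Z_{n}=aY_{n}+Y_{n}\left(Z_{n}-a\right)$ for the product are likewise sound, and your closing observation that tightness is precisely where the constancy of $a$ enters is the right diagnosis of why the theorem fails for a non-degenerate limit of $Z_{n}$. No gaps.
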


\begin{thm}[Continuous Mapping]
\label{thm:cont_mapping}Let $\left(Y_{n}\right)_{n\in\IN}$ be a
sequence of random variables and $Y$ a random variable, each of them
taking values in some subset $A\subset\IR$, such that $Y_{n}\xrightarrow[n\rightarrow\infty]{\textup{p}}Y$,
and let $g:A\rightarrow\IR$ be a continuous function. Then
\[
g\left(Y_{n}\right)\xrightarrow[n\rightarrow\infty]{\textup{p}}g\left(Y\right).
\]
\end{thm}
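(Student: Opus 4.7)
The plan is to reduce the statement to almost sure convergence by invoking the standard subsequence characterisation of convergence in probability, which asserts that a sequence $\left(X_{n}\right)_{n\in\IN}$ of random variables converges in probability to $X$ if and only if every subsequence of $\left(X_{n}\right)_{n\in\IN}$ admits a further subsequence converging to $X$ almost surely. This shifts the problem into a setting where continuity of $g$ can be used pointwise without any need to invoke uniform continuity on compact sets, which would otherwise require a somewhat delicate truncation argument because the set $A$ is not assumed to be closed or bounded.

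Concretely, I would argue as follows. Fix a subsequence $\left(Y_{n_{k}}\right)_{k\in\IN}$; this still converges in probability to $Y$, so by the characterisation there is a further subsequence $\left(Y_{n_{k_{j}}}\right)_{j\in\IN}$ and an event $\Omega_{0}$ of full probability on which $Y_{n_{k_{j}}}\left(\omega\right)\rightarrow Y\left(\omega\right)$ as $j\rightarrow\infty$ for every $\omega\in\Omega_{0}$. Since $Y_{n_{k_{j}}}\left(\omega\right),Y\left(\omega\right)\in A$ and $g$ is continuous on $A$, the sequential continuity of $g$ at the point $Y\left(\omega\right)$ yields $g\left(Y_{n_{k_{j}}}\left(\omega\right)\right)\rightarrow g\left(Y\left(\omega\right)\right)$ for all $\omega\in\Omega_{0}$. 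Hence $g\left(Y_{n_{k_{j}}}\right)\rightarrow g\left(Y\right)$ almost surely, and therefore also in probability.

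Applying the subsequence characterisation in the reverse direction to the sequence $\left(g\left(Y_{n}\right)\right)_{n\in\IN}$, we conclude that $g\left(Y_{n}\right)\xrightarrow[n\rightarrow\infty]{\textup{p}}g\left(Y\right)$, as desired. The only ingredient besides the definition of continuity is the subsequence characterisation, which is a classical measure-theoretic fact; I would either cite it from a standard reference or prove it in a brief preliminary lemma. I do not anticipate any genuine obstacle; the mild subtlety is simply to notice that continuity of $g$ on $A$, rather than on an open neighbourhood of $A$ in $\IR$, is enough because both $Y$ and every $Y_{n}$ take values in $A$ by hypothesis.
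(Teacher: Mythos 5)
Your argument is correct. Note, however, that the paper itself gives no proof of this statement: Theorem \ref{thm:cont_mapping} is listed in the Appendix as a classical auxiliary fact and is used as a black box, so there is no in-paper argument to compare against. Your route via the subsequence characterisation of convergence in probability (every subsequence admits a further almost surely convergent subsequence, and conversely) is one of the two standard proofs, and it is carried out correctly: the reduction to almost sure convergence lets you use only sequential continuity of $g$ at the point $Y\left(\omega\right)$ relative to $A$, which, as you observe, is all the hypothesis provides since $A$ need not be closed, bounded, or open in $\IR$. The only ingredient you defer, the subsequence characterisation itself, is indeed classical (the forward direction by Borel--Cantelli, the converse because almost sure convergence implies convergence in probability), so citing it or proving it as a preliminary lemma is entirely adequate. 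The alternative direct proof, which for each $\varepsilon>0$ splits the event $\left\{ \left|g\left(Y_{n}\right)-g\left(Y\right)\right|>\varepsilon\right\}$ according to whether $Y$ lies in a large compact set on which one exploits the modulus of continuity of $g$, is messier precisely because of the truncation you mention, so your choice of method is the cleaner one here.
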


\begin{lem}
\label{lem:conv_restr_sequence}Let $\left(Y_{n}\right)_{n\in\IN}$
be a sequence of random variables and $\left(M_{n}\right)_{n\in\IN}$
a sequence of positive numbers such that
\[
\left|Y_{n}\right|\leq M_{n},\quad n\in\IN,
\]
is satisfied. Let $\nu$ be a probability measure on $\IR$, and assume
the convergence $Y_{n}\xrightarrow[n\rightarrow\infty]{\textup{d}}\nu$.
Finally, let $\left(B_{n}\right)_{n\in\IN}$ be a sequence of measurable
sets which satisfies
\[
\IP\left\{ Y_{n}\in B_{n}\right\} =o\left(\frac{1}{M_{n}}\right).
\]
Then we have
\[
\II_{\left\{ Y_{n}\in B_{n}^{c}\right\} }Y_{n}\xrightarrow[n\rightarrow\infty]{\textup{d}}\nu.
\]
\end{lem}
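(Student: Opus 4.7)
The plan is to reduce the claim to a direct application of Slutsky's theorem (Theorem~\ref{thm:slutsky}) by writing
\[
\II_{\{Y_n \in B_n^c\}} Y_n \;=\; Y_n \;-\; Y_n\,\II_{\{Y_n \in B_n\}},
\]
so that it suffices to establish $Y_n\,\II_{\{Y_n \in B_n\}} \xrightarrow[n\to\infty]{\textup{p}} 0$. Once this is shown, Slutsky's theorem applied with $Z_n \coloneq -Y_n\,\II_{\{Y_n \in B_n\}}$ and $a = 0$ yields $Y_n + Z_n \xrightarrow[n\to\infty]{\textup{d}} \nu$, which is the desired conclusion.

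To prove the probability convergence of the discarded term, I would exploit both hypotheses simultaneously. The bound $|Y_n| \leq M_n$ gives $|Y_n\,\II_{\{Y_n \in B_n\}}| \leq M_n\,\II_{\{Y_n \in B_n\}}$, so
\[
\IE\bigl|Y_n\,\II_{\{Y_n \in B_n\}}\bigr| \;\leq\; M_n\,\IP\{Y_n \in B_n\} \;=\; M_n \cdot o\!\left(\tfrac{1}{M_n}\right) \;=\; o(1),
\]
as $n\to\infty$. Convergence in $L^1$ implies convergence in probability, which is exactly what is required. (Alternatively, since $M_n>0$, the rate $\IP\{Y_n \in B_n\} = o(1/M_n)$ already forces $\IP\{Y_n \in B_n\} \to 0$, and then Markov's inequality applied to the indicator gives, for any $\varepsilon>0$, $\IP\{|Y_n\,\II_{\{Y_n\in B_n\}}|>\varepsilon\} \leq \IP\{Y_n\in B_n\} \to 0$; the $L^1$ route is more transparent about how the two hypotheses interact.)

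I do not anticipate any real obstacle: the statement is a clean truncation lemma, and the role of the pairing $|Y_n|\leq M_n$ with $\IP\{Y_n\in B_n\} = o(1/M_n)$ is precisely to control the first absolute moment of the removed piece. The only point requiring a small amount of care is making sure that the indicator-based decomposition puts the argument exactly into the shape of Slutsky's theorem as stated (namely, a sum of a distributionally convergent sequence and a sequence converging in probability to a constant); this is immediate from the identity displayed above.
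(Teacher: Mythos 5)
Your proof is correct and complete. The paper does not actually include a proof of this lemma --- it is stated in the appendix as an auxiliary result, with the remark that some of these are proved in the companion articles --- so there is no in-paper argument to compare against; your route, namely the decomposition $\II_{\left\{ Y_{n}\in B_{n}^{c}\right\} }Y_{n}=Y_{n}-Y_{n}\,\II_{\left\{ Y_{n}\in B_{n}\right\} }$, the bound $\IE\left|Y_{n}\,\II_{\left\{ Y_{n}\in B_{n}\right\} }\right|\leq M_{n}\,\IP\left\{ Y_{n}\in B_{n}\right\} =o\left(1\right)$, and an application of Theorem \ref{thm:slutsky}, is exactly the standard truncation argument this lemma calls for, and it uses both hypotheses in the intended way. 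One minor caveat: your parenthetical alternative claims that $\IP\left\{ Y_{n}\in B_{n}\right\} =o\left(1/M_{n}\right)$ by itself forces $\IP\left\{ Y_{n}\in B_{n}\right\} \rightarrow0$; this implication requires $M_{n}$ to be bounded away from $0$ (it can fail if $M_{n}\rightarrow0$, e.g.\ $M_{n}=n^{-2}$ and $\IP\left\{ Y_{n}\in B_{n}\right\} =1/2$), although it does hold in the paper's applications where $M_{n}=\sqrt{n}$, and your primary $L^{1}$ argument does not depend on it.
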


\begin{thm}[Delta Method]
\label{thm:delta_method}Let $\left(Y_{n}\right)_{n\in\IN}$ be a
sequence of random variables such that $\IE Y_{n}=\mu\in\IR$ for
all $n\in\IN$ and $\sqrt{n}\left(Y_{n}-\mu\right)\xrightarrow[n\rightarrow\infty]{\textup{d}}\mathcal{N}\left(0,\sigma^{2}\right)$
for a constant $\sigma>0$. Let $f:D\rightarrow\IR$ be a continuously
differentiable function with domain $D\subset\IR$ such that $Y_{n}\in D$
for all $n\in\IN$. Assume $f'\left(\mu\right)\neq0$. Then
\[
\sqrt{n}\left(f\left(Y_{n}\right)-f\left(\mu\right)\right)\xrightarrow[n\rightarrow\infty]{\textup{d}}\mathcal{N}\left(0,\left(f'\left(\mu\right)\right)^{2}\sigma^{2}\right)
\]
is satisfied.
\end{thm}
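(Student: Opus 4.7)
The plan is to prove this by combining the classical mean value theorem argument with Slutsky's theorem, which is already stated earlier in the excerpt. First I would observe that since $\sqrt{n}(Y_n - \mu) \xrightarrow{\textup{d}} \mathcal{N}(0,\sigma^2)$, dividing by the deterministic sequence $\sqrt{n} \to \infty$ and invoking Slutsky's theorem (Theorem \ref{thm:slutsky}) yields $Y_n - \mu \xrightarrow{\textup{p}} 0$, i.e., $Y_n \xrightarrow{\textup{p}} \mu$.

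Next I would apply a first-order Taylor expansion. Since $f$ is continuously differentiable on $D$ and (after possibly shrinking to a neighborhood of $\mu$ inside $D$, which is permissible because $Y_n$ concentrates on such neighborhoods with probability tending to one) we can write
\[
f(Y_n) - f(\mu) = f'(\xi_n)\,(Y_n - \mu),
\]
where $\xi_n$ lies on the segment between $Y_n$ and $\mu$. Multiplying both sides by $\sqrt{n}$ gives
\[
\sqrt{n}\bigl(f(Y_n) - f(\mu)\bigr) = f'(\xi_n)\cdot \sqrt{n}(Y_n - \mu).
\]

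Because $|\xi_n - \mu| \leq |Y_n - \mu|$, the convergence $Y_n \xrightarrow{\textup{p}} \mu$ forces $\xi_n \xrightarrow{\textup{p}} \mu$. Continuity of $f'$ at $\mu$ combined with Theorem \ref{thm:cont_mapping} then yields $f'(\xi_n) \xrightarrow{\textup{p}} f'(\mu)$. Applying Slutsky's theorem one more time to the product with $\sqrt{n}(Y_n-\mu) \xrightarrow{\textup{d}} \mathcal{N}(0,\sigma^2)$ delivers
\[
\sqrt{n}\bigl(f(Y_n) - f(\mu)\bigr) \xrightarrow[n\to\infty]{\textup{d}} f'(\mu)\cdot \mathcal{N}(0,\sigma^2) = \mathcal{N}\bigl(0,(f'(\mu))^2\sigma^2\bigr),
\]
which is the desired conclusion.

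The one technical point to handle carefully is the well-posedness of the Taylor expansion: the mean value theorem requires the whole segment from $Y_n$ to $\mu$ to lie in a set where $f'$ is defined and continuous. Since $D$ is only assumed to contain $\mu$ and the $Y_n$, I would fix a closed interval $[\mu-\delta,\mu+\delta] \subset D$ (choosing $\delta>0$ small enough using that $D$ is the domain of a continuously differentiable function and $\mu$ is in its interior) and define the truncated sequence $\tilde Y_n := Y_n\,\II_{\{|Y_n-\mu|\leq \delta\}} + \mu\,\II_{\{|Y_n-\mu|> \delta\}}$. Then the Taylor argument applies pointwise to $\tilde Y_n$, and because $\IP\{Y_n \neq \tilde Y_n\} = \IP\{|Y_n-\mu|>\delta\} \to 0$, the two sequences $\sqrt{n}(f(Y_n)-f(\mu))$ and $\sqrt{n}(f(\tilde Y_n)-f(\mu))$ share the same distributional limit. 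This truncation step is the only nontrivial part; the remainder is a straightforward assembly of Slutsky and continuous mapping.
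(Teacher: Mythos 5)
The paper does not actually prove this statement: Theorem \ref{thm:delta_method} is listed in the Appendix as a classical auxiliary result and is invoked without proof, so there is no in-paper argument to compare yours against. Your proposal is the standard mean-value-theorem proof of the delta method and is essentially correct: deducing $Y_n\xrightarrow{\textup{p}}\mu$ from the hypothesis via Slutsky, writing $f(Y_n)-f(\mu)=f'(\xi_n)(Y_n-\mu)$, using continuity of $f'$ to get $f'(\xi_n)\xrightarrow{\textup{p}}f'(\mu)$, and closing with Slutsky again is exactly the textbook route, and your truncation device correctly handles the domain issue since two sequences that coincide with probability tending to one share their distributional limit. Two small points worth making explicit if you write this up: (i) the mean value theorem only asserts the \emph{existence} of $\xi_n$, not a measurable selection, so it is cleaner to define the random factor directly as the difference quotient $\left(f(Y_n)-f(\mu)\right)/\left(Y_n-\mu\right)$ on $\{Y_n\neq\mu\}$ and as $f'(\mu)$ otherwise, which is measurable and converges in probability to $f'(\mu)$ by the same continuity argument; (ii) your choice of $\delta$ with $[\mu-\delta,\mu+\delta]\subset D$ tacitly requires $\mu$ to be an interior point of $D$ (or at least that $D$ contains an interval around $\mu$), an assumption not literally present in the statement but clearly intended and satisfied in the paper's applications, where $D$ is a compact interval containing $\mu$ in its interior.
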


Recall Notation \ref{notation:infty} for the expressions $\left[-\infty,\infty\right]$
and $\left[0,\infty\right]$.
\begin{defn}
\label{def:LDP}Let $\left(P_{n}\right)_{n\in\IN}$ be a sequence
of probability measures on a metric space $\mathcal{X}$, let $\left(a_{n}\right)_{n\in\IN}$
be a sequence of positive numbers with $a_{n}\xrightarrow[n\rightarrow\infty]{}\infty$,
and let $I:\mathcal{X}\rightarrow\left[0,\infty\right]$ be a function.
If $I$ is lower semi-continuous, i.e. its level sets $\left\{ x\in\mathcal{X}\,|\,I\left(x\right)\leq\alpha\right\} $
are closed for each $\alpha\in\left[0,\infty\right)$, we call $I$
a rate function. If the level sets are compact in $\mathcal{X}$ for
each $\alpha\in\left[0,\infty\right)$, we call $I$ a good rate function.
If $I$ is a good rate function, and the two conditions
\begin{enumerate}
\item $\limsup_{n\rightarrow\infty}\frac{1}{a_{n}}\ln P_{n}K\le-\inf_{x\in K}I\left(x\right)$
for each closed set $K\subset\mathcal{X}$,
\item $\liminf_{n\rightarrow\infty}\frac{1}{a_{n}}\ln P_{n}G\geq-\inf_{x\in G}I\left(x\right)$
for each open set $G\subset\mathcal{X}$
\end{enumerate}
hold, then we say that the sequence $\left(P_{n}\right)_{n\in\IN}$
satisfies a large deviations principle with rate $a_{n}$ and rate
function $I$. If $\left(Y_{n}\right)_{n\in\IN}$ is a sequence of
random variables taking values in $\mathcal{X}$ such that, for each
$n\in\IN$, $Y_{n}$ follows the distribution $P_{n}$, we will also
say that $\left(Y_{n}\right)_{n\in\IN}$ satisfies a large deviations
principle with rate $a_{n}$ and rate function $I$.

In our applications of large deviations principles, the metric space
$\mathcal{X}$ will be $\IR$ or $\left[-\infty,\infty\right]$.
\end{defn}

\begin{defn}
\label{def:exchange}Let $Y_{1},\ldots,Y_{n}$ be real random variables
with joint distribution $P$ on $\IR^{n}$. We say that $Y_{1},\ldots,Y_{n}$
are exchangeable if for all permutations $\pi$ on $\IN_{n}$ the
random vector $\left(Y_{\pi\left(1\right)},\ldots,Y_{\pi\left(n\right)}\right)$
has joint distribution $P$.
\end{defn}

\begin{lem}
\label{lem:exchange}The random variables $X_{1},\ldots,X_{N}$ are
exchangeable under the distribution $\IP_{\beta,N}$ in Definition
\ref{def:CWM}.
\end{lem}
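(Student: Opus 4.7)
The plan is to observe that the Curie–Weiss density in Definition \ref{def:CWM} depends on the voting configuration only through the group sums $\sum_i x_{\lambda i}$, and that these sums are invariant under permutations that act within each group. Since Lemma \ref{lem:exchange} concerns exchangeability inside a single group (denoting the votes as $X_1,\dots,X_N$ under $\IP_{\beta,N}$), it is enough to carry out the verification in the one-group reduction obtained by marginalising (\ref{eq:CWM}) onto one coordinate block, for which the density is proportional to $\exp\!\bigl(\tfrac{\beta}{2N}(\sum_i x_i)^2\bigr)$.

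First, I would fix an arbitrary permutation $\pi$ of $\IN_N$ and a configuration $(x_1,\dots,x_N) \in \Omega_N$, and set $y_i \coloneq x_{\pi^{-1}(i)}$, so that the event $\{X_{\pi(1)}=x_1,\dots,X_{\pi(N)}=x_N\}$ coincides with $\{X_1=y_1,\dots,X_N=y_N\}$. The key step is the identity
\begin{equation*}
\sum_{i=1}^{N} y_i \;=\; \sum_{i=1}^{N} x_{\pi^{-1}(i)} \;=\; \sum_{j=1}^{N} x_j,
\end{equation*}
which is simply a reindexing of a finite sum. Squaring both sides shows that the exponent in (\ref{eq:CWM}) takes the same value at $(y_1,\dots,y_N)$ as at $(x_1,\dots,x_N)$, and since the partition function $Z_{\beta,N}$ does not depend on the configuration, the probabilities coincide.

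Putting these pieces together yields $\IP_{\beta,N}(X_{\pi(1)}=x_1,\dots,X_{\pi(N)}=x_N) = \IP_{\beta,N}(X_1=x_1,\dots,X_N=x_N)$ for every permutation $\pi$ and every $(x_1,\dots,x_N)\in\Omega_N$, which is the definition of exchangeability (Definition \ref{def:exchange}). There is no real obstacle here: the only subtlety worth making explicit is that the symmetry of the Hamiltonian is precisely mean-field, so one must point out that the exponent is a symmetric function of the coordinates within each group. For the full multi-group case, the identical argument applied blockwise shows joint invariance under the action of $\prod_{\lambda=1}^M S_{N_\lambda}$, which justifies the reduction mentioned in the paragraph preceding Definition \ref{def:alpha} that only the first $K_{\lambda,N_\lambda}$ votes in each group need to be observed.
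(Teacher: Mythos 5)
Your proof is correct. Note that the paper itself does not supply an argument for this lemma: it simply cites Lemma 59 of \cite{BalNauTo2025}, so there is no internal proof to compare against. Your verification --- that the exponent in (\ref{eq:CWM}) depends on the configuration only through the within-group sums $\sum_i x_{\lambda i}$, which are invariant under the reindexing $y_i \coloneq x_{\pi^{-1}(i)}$, while $Z_{\boldsymbol{\beta},\boldsymbol{N}}$ is configuration-independent --- is the standard symmetry argument, and the identification of the event $\{X_{\pi(1)}=x_1,\ldots,X_{\pi(N)}=x_N\}$ with $\{X_1=y_1,\ldots,X_N=y_N\}$ is handled correctly, so it matches Definition \ref{def:exchange} exactly.
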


\begin{proof}
This is Lemma 59 in \cite{BalNauTo2025}.
\end{proof}
\bibliographystyle{plain}

\end{document}